\documentclass[11pt,a4paper]{amsart}
\usepackage[colorlinks, linkcolor=red, anchorcolor=blue, citecolor=green]{hyperref}
\usepackage{amsthm,bm} %bm make \mathbf work%
\usepackage{amsfonts}
\usepackage{amsmath}
\usepackage{mathrsfs}
\usepackage{amssymb}
\usepackage{amsfonts}
\usepackage{amsbsy}
\usepackage{ifthen}
\usepackage[all]{xy}
\usepackage{extarrows}
%============================ åŒçšå®å
\usepackage{CJK,CJKnumb}
\usepackage{color}              % æ¯æåœ©è²
\usepackage{indentfirst}        %éŠè¡çŒ©è¿å®å
\usepackage{latexsym,bm}        % å€çæ°å­Šå
\usepackage{graphicx}
\usepackage{cases}
\usepackage{pifont}
\usepackage{txfonts}
\usepackage{xcolor}
\usepackage{multirow}

\newcounter{maint}

\newtheorem{mteor}[maint]{Theorem}
%\newcommand{\Dchaintwo}[4]{
%	\rule[-3\unitlength]{0pt}{8\unitlength}
%	\begin{picture}(14,5)(0,3)
%	\put(1,2){\ifthenelse{\equal{#1}{l}}{\circle*{2}}{\circle{2}}}
%	\put(2,2){\line(1,0){10}}
%	\put(13,2){\ifthenelse{\equal{#1}{r}}{\circle*{2}}{\circle{2}}}
%	\put(1,5){\makebox[0pt]{\scriptsize #2}}
%	\put(7,4){\makebox[0pt]{\scriptsize #3}}
%	\put(13,5){\makebox[0pt]{\scriptsize #4}}
%	\end{picture}}
%===================================================================%
%                         åç§è·çŠ»äžçŒ©è¿
%===================================================================%
%-------------------- çšäºäž­ææ®µèœçŒ©è¿ åæ­£æçåŒ ------------------%
\setlength{\textwidth}{14cm} \setlength{\textheight}{20cm}
\setlength{\hoffset}{0cm} \setlength{\voffset}{0cm}
\setlength{\parindent}{2em}                 % éŠè¡äž€äžªæ±å­ççŒ©è¿é
\setlength{\parskip}{3pt plus1pt minus1pt}  % æ®µèœä¹éŽçç«çŽè·çŠ»
        % å®ä¹è¡è·

%---------------------------- æ°å­Šå¬åŒè®Ÿçœ® ------------------------------%
\setlength{\abovedisplayskip}{2pt plus1pt minus1pt}     %å¬åŒåçè·çŠ»
\setlength{\belowdisplayskip}{6pt plus1pt minus1pt}     %å¬åŒåé¢çè·çŠ»
\setlength{\arraycolsep}{2pt}   %åšäžäžªarrayäž­åä¹éŽçç©ºçœé¿åºŠ, å äžºåæ¥çå€ªå®œäº

\allowdisplaybreaks[4]  % \eqnarrayåŠæåŸé¿ïŒåœ±ååæ ãæ¢è¡ååé¡µ
                        %ïŒæŽåæªåšïŒé æé¡µé¢ç©ºçœïŒïŒå¯ä»¥è®Ÿçœ®æäžºèªåšè°æŽæš¡åŒ

%---------------------- å®ä¹ç« èççŒå·æ ŒåŒ --------------------------%
\CJKtilde   %çšäºè§£å³è±æå­æ¯åæ±å­çéŽè·é®é¢ãäŸåŠïŒåé~$x$~ çåŒã

            %å®äºå¿èŠæ¶åšæ±å­ä¹éŽæå¥äžäžªéå çç©ºéïŒä»¥è§£å³è¡çè¶é¿é®é¢ã

\numberwithin{equation}{section}

%================= äžäºèªå®ä¹åœä»€ =============%

\newcommand{\ydh}{{}^H_H\mathcal{YD}}
\newcommand{\mi}{\mathrm{i}}
%\newcommand{\al}{\alpha}
%\newcommand{\be}{\beta}
%\newcommand{\de}{\delta}
%\newcommand{\De}{\Delta}
%\newcommand{\ep}{\epsilon}
%\newcommand{\ga}{\gamma}
%\newcommand{\Ga}{\Gamma}
%\newcommand{\la}{\lambda}
%\newcommand{\lau}{\lambdaup}
%\newcommand{\La}{\Lambda}
%\newcommand{\mh}{\mathcal {H}}
%\newcommand{\mk}{\mathcal {K}}
%\newcommand{\mx}{\mathcal {X}}
%\newcommand{\om}{\omega}
%\newcommand{\si}{\sigma}
%\newcommand{\te}{\theta}
%\newcommand{\ve}{\varepsilonup}
%\newcommand{\vs}{\varsigmaup}
%\newcommand{\ze}{\zeta}
%\newcommand{\ds}[1]{{\displaystyle{#1}}}
%\newcommand{\nb}[1]{{\bfseries #1}}
%\newcommand{\mb}[1]{\mbox{\bfseries #1}}
%\newcommand{\mbs}[1]{\mbox{\bfseries\scriptsize #1}}
%%\newcommand{\mbt}[1]{\mbox{\bfseries\tiny #1}}
%\newcommand{\ms}[1]{\mbox{\sffamily #1}}
%\newcommand{\bs}[1]{{\scriptsize\mbox{#1}}}
%\newcommand{\lan}{\langle}
%\newcommand{\ran}{\rangle}
%\newcommand{\lb}{\left(}
%\newcommand{\rb}{\right)}
%=================== End ======================%

%%%%%%%%%%%%%%%%%%%%%%%%%%%%%%%%%

\begin{document}

\newtheorem{theorem}{Theorem}[section]

\newtheorem{lemma}[theorem]{Lemma}

\newtheorem{corollary}[theorem]{Corollary}
\newtheorem{proposition}[theorem]{Proposition}

\theoremstyle{remark}
\newtheorem{remark}[theorem]{Remark}

\theoremstyle{definition}
\newtheorem{definition}[theorem]{Definition}

\theoremstyle{definition}
\newtheorem{conjecture}[theorem]{Conjecture}

\newtheorem{example}[theorem]{Example}

%%%%%%%%%%%%%%%%%%%%%%%%%%%%%%%%%%%%%%%%%%%%%%%
%%%%%%%%%%%%%%%%%%%%%%%%%%%%%%%%%%%%%%%%%%%%%%%%%

\title[ Finite dimensional Hopf algebras over Kac-Paljutkin algebra
$H_8$]{ Finite dimensional Hopf algebras over Kac-Paljutkin algebra $H_8$}

\author[Shi]{Yuxing Shi }
\address{School of Mathematics and Information Science, Guangzhou University,  Guangzhou 510006, P.R. China}\email{blueponder@foxmail.com}

%\author[Li]{Yunnan Li}
%\address{School of Mathematics and Information Science, Guangzhou University,  Guangzhou 510006, P.R. China}
%\email{scynli@scut.edu.cn}
%
%\author[Huang]{Wenxue Huang}
%\address{School of Mathematics and Information Science, Guangzhou University,  Guangzhou 510006, P.R. China}
%\email{whuang123@yahoo.com}

%\author[Hu]{Naihong Hu$^\star$}
%\address{Department of Mathematics, Shanghai Key Laboratory of Pure Mathematics and Mathematical
%Practice, East China Normal University,
%Min Hang Campus, Dong Chuan Road 500, Shanghai 200241, PR China}
%\email{nhhu@math.ecnu.edu.cn}
%\thanks{$^\star$N.H., Corresponding author, Tel/Fax: 0086-21-54342609; email: nhhu@math.ecnu.edu.cn. \\
%This project is supported by the NNSF (Grant: 11271131), the National \& Shanghai Leading
%Academic Discipline Projects (Project Number: B407).
%}

%    General info
\subjclass{Primary 17B37, 81R50; Secondary 17B35}
%\date{October 22, 2013}
\thanks{
%2010 Mathematics Subject Classification: 16T05.\\
\textit{Keywords:} Nichols algebra; Hopf algebra; Gelfand-Kirillov dimension; Kac-Paljutkin algebra.
%\\
%This work was partially supported by
%the doctoral research project of Guangdong Province (gdbsh2014004)
}

\begin{abstract}
Let $H_8$ be the neither commutative nor cocommutative  semisimple  eight dimensional Hopf algebra, which is also called Kac-Paljutkin algebra \cite{MR0208401}. All simple Yetter-Drinfel'd modules over $H_8$ are given. As for simple objects and  direct sums of two simple objects in ${}_{H_8}^{H_8}\mathcal{YD}$,  we calculated dimensions for  the corresponding Nichols algebras, except four  semisimple cases which are generally difficult. Under the assumption that the four undetermined Nichols algebras are all infinite dimensional, we determine all the finite dimensional Nichols algebras over $H_8$. It turns out that  the already known finite dimensional Nichols algebras  are all diagonal type. In fact, they are Cartan types $A_1$, $A_2$, $A_2\times A_2$, $A_1\times \cdots \times A_1$, and $A_1\times \cdots \times A_1\times A_2$. By the way, we calculate Gelfand-Kirillov dimensions for some Nichols algebras. As an application, we obtain five families of  new  finite dimensional  Hopf algebras over $H_8$ according to the lifting method.
\end{abstract}
\maketitle

\section{Introduction}
Let $\mathbb{K}$ be an algebraically closed field of characteristic zero. The question of classification of all Hopf algebras over $\mathbb{K}$ of a given dimension
up to isomorphism was posed by Kaplansky in 1975 \cite{Kaplansky1975MR0435126}. 
Some progress has been made but, in general, it is
a difficult question for lack of standard methods.
One breakthrough is the so-called \emph{Lifting Method} \cite{MR1659895} introduced by Andruskiewitsch and Schneider in 1998,
under the assumption that the coradical is a Hopf subalgebra. 

We describe the procedure for the lifting method briefly. Let $H$ be a Hopf algebra whose coradical $H_0$ is a Hopf subalgebra. It is well-known that the associated
graded Hopf algebra of $H$ is isomorphic to $R\#H_0$ where
$R = \oplus_{n\in \mathbb N_0}R(n)$ is a braided Hopf algebra in the category ${}_{H_0}^{H_0}\mathcal{YD}$ of Yetter-Drinfield modules over $H_0$. $\#$ stands for the Radford biproduct or \textit{bosonization} of
$R$ with $H_0$. As explained in \cite{andruskiewitsch2001pointed}, to classify finite-dimensional Hopf algebras $H$ whose coradical is isomorphic to $H_0$ we have to deal with the following questions:
\begin{enumerate}\renewcommand{\theenumi}{\alph{enumi}}\renewcommand{\labelenumi}{(\theenumi)}
  \item\label{que:nichols-fd} Determine all Yetter-Drinfiel'd modules $V$ over $H_0$ such that the Nichols algebra $\mathfrak{B}(V)$ has finite dimension; find an efficient set of relations for $\mathfrak{B}(V)$.
\item\label{que:nichols-R} If $R = \oplus_{n\in \mathbb N_0}R(n)$ is a finite-dimensional Hopf algebra in ${}_{H_0}^{H_0}\mathcal{YD}$ with $V = R(1)$, decide if $R \simeq \mathfrak{B}(V)$. Here $V = R(1)$ is a braided vector space called the \textit{infinitesimal braiding}.
\item\label{que:lifting} Given $V$ as in \eqref{que:nichols-fd}, classify all $H$ such that $\mathrm{gr}\, H \simeq \mathfrak{B}(V)\#H_0$ (lifting).
\end{enumerate}

The lifting method was extensively used in the classification of finite dimensional pointed Hopf algebras such as \cite{andruskiewitsch2005classification},  \cite{andruskiewitsch2010nichols}, \cite{MR2811166}\cite{MR2862142},  \cite{andruskiewitsch2011finite}, \cite{andruskiewitsch2010pointed}, \cite{MR3493214}, \cite{MR3395052} and so on. It is also effective to study finite-dimensional copointed Hopf algebras \cite{MR2863448}, \cite{MR3119229}, \cite{2016arXiv160806167F}. We note that there are very few classification results on finite-dimensional Hopf algebras whose coradical is a Hopf subalgebra but not a group algebra and the dual of a group algebra, two exceptions being  \cite{MR2037722, 2016arXiv160503995A}.

Here we would like to initiate a project for the study of Hopf algebras whose coradicals are low-dimensional neither commutative nor cocommutative semisimple Hopf algebras by running  procedures of the lifting method. One important step is to study the Nichols algebras over those low-dimensional semisimple Hopf algebras. Nichols algebras were studied  first by Nichols \cite{nichols1978bialgebras} . These are connected graded braided Hopf algebras \cite{MR1907185} generated by primitive elements, and all primitive elements are of degree one. In the past decades,  the study of Nichols algebras was mainly focused on group algebras and which were finite dimensional, for those Nichols algebras were  essential ingredients of the classification of finite-dimensional pointed Hopf algebras. Under the assumption that the base field has characteristic $0$, the classification of finite-dimensional Nichols
algebras over abelian groups was completely solved in \cite{MR2207786, heckenberger2009classification} by using Lie theoretic structures, and the result of the classification played an important role later in the significant work \cite{andruskiewitsch2005classification}. The problem of classifying finite-dimensional Nichols algebras over non-abelian  groups is difficult in general for lack of systematic method, related works please refer to \cite{andruskiewitsch2010nichols}, \cite{freyre2007nichols},\cite{grana2011nichols}, %\cite{heckenberger2012braided}, 
\cite{MR2891215}
\cite{MR2732989},
\cite{MR3276225},
\cite{MR3272075}, etc.

In this paper, we mainly focus on  Kac-Paljutkin algebra $H_8$. The structure of our paper is as follows.
In Section \ref{Preliminaries},  we recall the fundamental
notions related to Yetter-Drinfel'd modules, Nichols algebras and 
Gelfand-Kirillov dimension.  In section \ref{YDMH8}, we construct all the simple left Yetter-Drinfel'd modules over $H_8$ according to Radford's method. In section \ref{section:NicholsAlgebras}, we get all the possible finite dimensional Nichols algebras from Yetter-Drinfel'd modules over $H_8$ under the assumption that  the four undetermined cases over semisimple modules  for which are difficult for us at the moment.  It turns out that all the already known finite dimensional Nichols algebras are Cartan types $A_1$, $A_2$, $A_2\times A_2$, $A_1\times \cdots \times A_1$, and $A_1\times \cdots \times A_1\times A_2$. Here is our first main result.
\begin{mteor}
\label{NicholsAlg:maintheorem}
 Suppose 
 $$\dim\mathfrak{B}\left(M\langle(xy,x)\rangle
\oplus W^{b_1,-1}\right)=\infty
=\dim\mathfrak{B}\left(M\langle(y,xy)\rangle
\oplus W^{b_1,-1}\right)$$ 
holds for $b_1=\pm 1$.
% both $\mathfrak{B}\left(M\langle(xy,x)\rangle
%\oplus W^{b_1,-1}\right)$ and $
%\mathfrak{B}\left(M\langle(y,xy)\rangle
%\oplus W^{b_1,-1}\right)$ are infinite dimensional, 
% Let $\{(b_j,g_j)\}_{j=1,2,3,4}=\{(\pm \mi,x), (\pm \mi,y)\}$ with all $n_j\in\mathbb{N}^{\geq 0}$.
 If $M\in {}_{H_8}^{H_8}\mathcal{YD}$ such that $\dim\mathfrak{B}(M)<\infty$, then $M$ is isomorphic either to one of the following Yetter-Drinfel'd modules
 \begin{enumerate}
 \item $\Omega_1(n_1,n_2,n_3,n_4)\triangleq
 \bigoplus_{j=1}^4 M\left<1|V_1(b_j),g_j\right>^{\oplus n_j}$
 with $\sum_{j=1}^4 n_j\geq 1$, $(b_1,g_1)=(\mi,x)$, $(b_2,g_2)=(-\mi,x)$, $(b_3,g_3)=(\mi,y)$, $(b_4,g_4)=(-\mi,y)$,
the infinitesimal braiding is type $\underbrace{A_1\times \cdots \times A_1}_{n_1+n_2+n_3+n_4}$.
 \item $\Omega_2(n_1,n_2)\triangleq M\langle\mi,x\rangle^{\oplus n_1}
 \oplus M\langle -\mi,x\rangle^{\oplus n_2}
 \oplus M\langle(xy,x)\rangle$
 , $n_1+n_2\geq 0$, the infinitesimal braiding is type 
  $\underbrace{A_1\times \cdots \times A_1}_{n_1+n_2}\times\, A_2$.
 \item $\Omega_3(n_1,n_2)\triangleq M\langle\mi,y\rangle^{\oplus n_1}
 \oplus M\langle -\mi,y\rangle^{\oplus n_2}
 \oplus M\langle(y,xy)\rangle$
 , $n_1+n_2\geq 0$, the infinitesimal braiding is type 
  $\underbrace{A_1\times \cdots \times A_1}_{n_1+n_2}\times\, A_2$.
 \item $\Omega_4(n_1,n_2)\triangleq M\langle\mi,x\rangle^{\oplus n_1}
 \oplus M\langle\mi,y\rangle^{\oplus n_2}
 \oplus W^{1,-1}$
 , $n_1+n_2\geq 0$, the infinitesimal braiding is type 
  $\underbrace{A_1\times \cdots \times A_1}_{n_1+n_2}\times\, A_2$.
 \item $\Omega_5(n_1,n_2)\triangleq M\langle -\mi,x\rangle^{\oplus n_1}
 \oplus M\langle -\mi,y\rangle^{\oplus n_2}
 \oplus W^{-1,-1}$
 , $n_1+n_2\geq 0$, the infinitesimal braiding is type 
  $\underbrace{A_1\times \cdots \times A_1}_{n_1+n_2}\times\, A_2$.
  \item $\Omega_6\triangleq M\langle(xy,x)\rangle
 \oplus M\langle(y,xy)\rangle$, the infinitesimal braiding is type $A_2\times A_2$.
 \item $\Omega_7\triangleq W^{1,-1}\oplus W^{-1,-1}$, the infinitesimal braiding is type $A_2\times A_2$.
 \end{enumerate}
\end{mteor}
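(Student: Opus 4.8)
The plan is to exploit the semisimplicity of the category $\mathcal{C} := {}_{H_8}^{H_8}\mathcal{YD}$ together with the monotonicity of Nichols algebras under subobjects. Since $H_8$ is semisimple, $\mathcal{C}$ is a semisimple braided tensor category, so every $M \in \mathcal{C}$ is a finite direct sum $M \cong \bigoplus_i V_i^{\oplus n_i}$ of the simple objects classified in Section \ref{YDMH8}. The key reduction is the standard fact that if $N \subseteq M$ is a subobject, then the braided subalgebra of $\mathfrak{B}(M)$ generated by $N$ is isomorphic to $\mathfrak{B}(N)$; hence $\dim\mathfrak{B}(M) < \infty$ forces $\dim\mathfrak{B}(N) < \infty$ for every subobject $N$. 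Applying this to $N = V_i$, to $N = V_i^{\oplus 2}$, and to $N = V_i \oplus V_j$, the entire problem is controlled by the dimensions of the Nichols algebras of single simples, of doubled simples, and of sums of two distinct simples, which is precisely the data assembled in Section \ref{section:NicholsAlgebras}.

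First I would record, from the single-summand computations, the simple objects $V$ with $\dim\mathfrak{B}(V) < \infty$: the four one-dimensional modules $M\langle\pm\mi,x\rangle$, $M\langle\pm\mi,y\rangle$ (self-braiding $-1$, type $A_1$) and the four two-dimensional modules $M\langle(xy,x)\rangle$, $M\langle(y,xy)\rangle$, $W^{1,-1}$, $W^{-1,-1}$ (type $A_2$); every other simple already has infinite-dimensional Nichols algebra and is excluded from any finite-dimensional $M$. The doubled-summand computations then fix the admissible multiplicities: the four one-dimensional modules may occur with arbitrary multiplicity (a sum of them is diagonal with diagonal entries $-1$ and trivial off-diagonal braiding, giving $A_1\times\cdots\times A_1$), whereas each two-dimensional module may appear at most once, since $\dim\mathfrak{B}(V^{\oplus 2}) = \infty$ for these.

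The core step is the pairwise analysis, which I would organise as a compatibility graph on the admissible building blocks, putting an edge between $V_i$ and $V_j$ exactly when $\dim\mathfrak{B}(V_i\oplus V_j) < \infty$. Using the cross-braidings of Section \ref{section:NicholsAlgebras}, together with the standing hypothesis that $\mathfrak{B}(M\langle(xy,x)\rangle\oplus W^{b_1,-1})$ and $\mathfrak{B}(M\langle(y,xy)\rangle\oplus W^{b_1,-1})$ are infinite-dimensional for $b_1=\pm 1$, one reads off the edges: $M\langle(xy,x)\rangle$ connects only to $M\langle\pm\mi,x\rangle$, $M\langle(y,xy)\rangle$ only to $M\langle\pm\mi,y\rangle$, the two modules $M\langle(xy,x)\rangle$ and $M\langle(y,xy)\rangle$ are mutually compatible, $W^{1,-1}$ connects to $M\langle\mi,x\rangle$, $M\langle\mi,y\rangle$ and $W^{-1,-1}$, and $W^{-1,-1}$ connects to $M\langle-\mi,x\rangle$, $M\langle-\mi,y\rangle$. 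For every compatible pair the joint braiding turns out to be of product (disconnected) diagonal type, so that any clique in this graph produces a braiding of diagonal type whose generalized Dynkin diagram is a disjoint union of $A_1$ and $A_2$ components. By the factorization $\mathfrak{B}(V\oplus W)\cong\mathfrak{B}(V)\otimes\mathfrak{B}(W)$ for product-type diagonal braidings (equivalently, Heckenberger's classification of finite-dimensional Nichols algebras of diagonal type with its rank-two reduction), finiteness of $\dim\mathfrak{B}(M)$ is then equivalent to the summands of $M$ forming a clique with the admissible multiplicities. Enumerating the maximal cliques and their multiplicity patterns yields exactly the seven families $\Omega_1,\dots,\Omega_7$, and reading off each Dynkin diagram gives the stated Cartan type; note in particular that triangles such as $\{W^{1,-1},W^{-1,-1},M\langle\mi,x\rangle\}$ fail to be cliques (as $W^{-1,-1}\not\sim M\langle\mi,x\rangle$), which is what prevents further families.

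The converse inclusion, that each $\Omega_i$ really has finite-dimensional Nichols algebra, follows by exhibiting the diagonal braiding explicitly and matching it to the finite-type diagram, which also supplies the dimension via the tensor factorization over components. I expect the genuine obstacle to be exactly the pairwise step for the two-dimensional modules: the sums $M\langle(xy,x)\rangle\oplus W^{b_1,-1}$ and $M\langle(y,xy)\rangle\oplus W^{b_1,-1}$ give a four-dimensional braided vector space whose braiding need not be of diagonal type, so neither Heckenberger's diagonal list nor a short PBW or Hilbert-series computation settles them directly; this is precisely why they are imposed as a hypothesis, and removing the hypothesis would require the Weyl-groupoid machinery for non-diagonal braidings. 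All remaining pairwise cases reduce to finite-type rank-two diagonal data and are routine.
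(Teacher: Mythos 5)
Your proposal is correct and follows essentially the same route as the paper's proof: decompose $M$ into simple Yetter-Drinfel'd modules by semisimplicity, use the monotonicity $\dim\mathfrak{B}(N)\leq\dim\mathfrak{B}(M)$ for subobjects $N\subseteq M$ to reduce everything to the single-simple, doubled-simple and pairwise data of Tables \ref{dimNAlgSimpleYDM_H8} and \ref{dimNAlgTwoSimpleYDM_H8} (Propositions \ref{NAlgdim1} and \ref{NicholsAlg:TensorOfTwoSimpleObjects}, with the stated hypothesis covering the undetermined pairs $M\langle(xy,x)\rangle\oplus W^{b_1,-1}$ and $M\langle(y,xy)\rangle\oplus W^{b_1,-1}$), and then confirm finiteness of the surviving families via their finite Cartan type together with the tensor factorization of Lemma \ref{TensorNicholsAlg}. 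Your compatibility-graph and maximal-clique packaging is an organizational refinement of the paper's case enumeration rather than a genuinely different argument, and your edge list and clique inventory match the paper's tables exactly.
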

\begin{remark}
We point out which of the Yetter-Drinfeld modules  have a principal realization and which not, since the liftings are known when there is a principal realization and not otherwise \cite{AAGI}.
Let $(h)$ and $(\delta_h)$ be a dual basis of $H_8$ and $H_8^{*}$, and  $b\in\{\pm 1, \pm\mi\}$,  then $$\chi_b:=\delta_1+\delta_{xy}+b(\delta_x+\delta_y)+
b^2(\delta_{z}+\delta_{zxy})+b^3(\delta_{zx}+\delta_{zy})\in\mathrm{Alg}(H_8,\mathbb{K}).$$
$(g,\chi_b)$ is a \emph{YD-pair} \cite{MR3133699} and 
$\mathbb{K}_g^{\chi_b}\simeq M\langle b,g\rangle$. $M\langle(g_1,g_2)\rangle$ for $(g_1,g_2)\in \{(xy,x),(y,xy)\}$ and $W^{b_1,-1}$ for $b_1=\pm 1$ don't have a \emph{principal realization} \cite[Subsection 2.2]{AAGI},  since $\mathbb{K}_{g_1}^{\chi_{b_1}}\oplus 
\mathbb{K}_{g_2}^{\chi_{b_2}}$ is of type $A_1\times A_1$ for 
$(b_1,g_1), (b_2,g_2)\in\{(\pm \mi,x), (\pm \mi,y)\}$.
So only $\Omega_1(n_1,n_2,n_3,n_4)$ has a principal realization.
\end{remark}

In section \ref{section:HopfAlgebras}, according to the lifting method, we give a classification for finite-dimensional Hopf algebras over $H_8$ such that  their infinitesimal braidings  are  isomorphic to those Yetter-Drinfel'd modules listed in Theorem \ref{NicholsAlg:maintheorem}. Here is the second main result. 

\begin{mteor}\label{HopfAlgOverH8}
 Suppose  $H$ is a finite-dimensional Hopf algebra over $H_8$ such that its 
 infinitesimal braiding is isomorphic to one of the Yetter-Drinfel'd modules listed in  Theorem \ref{NicholsAlg:maintheorem},  then  $H$ is isomorphic either to
 \begin{enumerate}
 \item $\mathfrak{A}_1(n_1,n_2,n_3,n_4;I_1)$, see Definition \ref{definition:HopfAlgA_1};
 \item $\mathfrak{B}[\Omega_2(n_1, n_2)]\# H_8$, see Proposition 
 \ref{HopfAlg:Omega2};
 \item $\mathfrak{A}_4(n_1,n_2;I_4)$, see Definition \ref{Definition:HopfAlgA_4};
 \item $\mathfrak{A}_6(\lambda)$, see Definition \ref{definition:A_6};
 \item $\mathfrak{A}_7(I_7)$, see Definition \ref{Definition:HopfAlgA_7}.
 \end{enumerate}
\end{mteor}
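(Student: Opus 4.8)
The plan is to run the Lifting Method \cite{MR1659895}, treating in turn the seven infinitesimal braidings produced by Theorem \ref{NicholsAlg:maintheorem}. First I would reduce the classification to a deformation problem. Let $H$ be as in the statement, so that its coradical is $H_0=H_8$; since $H_8$ is semisimple this is a Hopf subalgebra and the coradical filtration is a Hopf algebra filtration, whence $\mathrm{gr}\,H\cong R\#H_8$, where $R=\bigoplus_{n\geq 0}R(n)$ is a connected graded braided Hopf algebra in ${}_{H_8}^{H_8}\mathcal{YD}$ with $R(1)=V$ the infinitesimal braiding and $P(R)=R(1)$. By hypothesis $V$ is one of $\Omega_1,\dots,\Omega_7$, each of which is of diagonal (indeed Cartan) type with $\dim\mathfrak{B}(V)<\infty$; for such $V$ the diagram $R$ is generated in degree one, so $R\cong\mathfrak{B}(V)$ and $\mathrm{gr}\,H\cong\mathfrak{B}(V)\#H_8$. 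Classifying the possible $H$ then amounts to classifying all liftings of these bosonizations.

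Next I would cut the seven cases down to five. I would use a Hopf automorphism of $H_8$ (for instance the one interchanging the grouplikes $x$ and $y$), which induces an auto-equivalence of ${}_{H_8}^{H_8}\mathcal{YD}$ identifying $\Omega_3(n_1,n_2)$ with $\Omega_2(n_1,n_2)$ and $\Omega_5(n_1,n_2)$ with $\Omega_4(n_1,n_2)$. Transport of structure along this automorphism carries the liftings of $\Omega_3$ (resp. $\Omega_5$) to those of $\Omega_2$ (resp. $\Omega_4$), producing isomorphic Hopf algebras. It therefore suffices to determine the liftings of the five representatives $\Omega_1,\Omega_2,\Omega_4,\Omega_6,\Omega_7$, which will yield exactly the five families of the statement.

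The core step is the lifting itself, carried out for each representative. I would fix a presentation of $\mathfrak{B}(V)$ by its defining relations, namely the quantum Serre relations and the vanishing of the relevant root vectors for each $A_1$ and $A_2$ block, together with the quantum-plane relations linking distinct blocks. After choosing liftings in $H$ of the degree-one generators, for each defining relation $r=0$ I would search for the most general $\widetilde r\in H$ reducing to $r$ in $\mathrm{gr}\,H$; imposing that the $\widetilde r$ generate a Hopf ideal forces $\Delta(\widetilde r)$ into the correct skew-primitive shape, confining the correction terms to $H_8$ and pinning down the admissible deformation parameters. For $\Omega_1$, which has a principal realization (see the Remark), the liftings are governed by the general diagonal-type results \cite{AAGI}, and the free parameters organize into the linking datum $I_1$, giving $\mathfrak{A}_1(n_1,n_2,n_3,n_4;I_1)$. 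For $\Omega_2$, whose $A_2$ block comes from $M\langle(xy,x)\rangle$ and has no principal realization, I expect to find that every candidate deformation parameter is forced to vanish, so the unique lifting is the bosonization $\mathfrak{B}[\Omega_2(n_1,n_2)]\#H_8$. The same analysis applied to $\Omega_4$, $\Omega_6$ and $\Omega_7$ should produce, respectively, the parameters $I_4$, the single scalar $\lambda$, and $I_7$, giving $\mathfrak{A}_4(n_1,n_2;I_4)$, $\mathfrak{A}_6(\lambda)$ and $\mathfrak{A}_7(I_7)$.

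The hard part will be the explicit lifting of the non-principal cases $\Omega_2,\Omega_4,\Omega_6,\Omega_7$, where the off-the-shelf diagonal-type classification does not directly apply. For each lifted relation one must compute $\Delta(\widetilde r)$ using the explicit algebra, module and comodule structure of $H_8$, verify that the only admissible corrections are those asserted, and confirm that the deformed relations still generate a Hopf ideal of the expected codimension, equivalently that $\dim H=8\cdot\dim\mathfrak{B}(V)$ with no further relations needed. A final bookkeeping step would check, via explicit isomorphisms and invariants, that distinct parameter values give non-isomorphic Hopf algebras while redundant ones are identified, so that the five families are pairwise distinct and exhaust all possibilities.
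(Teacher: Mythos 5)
Your proposal is correct and follows essentially the same three-step route as the paper's proof: generation in degree one giving $\mathrm{gr}\,H\simeq\mathfrak{B}(V)\#H_8$ (Theorem \ref{generatedByDegreeOne}), the Hopf automorphism of $H_8$ exchanging $x$ and $y$ to reduce the seven braidings to the representatives $\Omega_1,\Omega_2,\Omega_4,\Omega_6,\Omega_7$ (Corollary \ref{Isomorphism:B(V)H_8}), and a case-by-case determination of the liftings by forcing the deformed relations into skew-primitive form (Propositions \ref{HopfAlge:A_1}, \ref{HopfAlg:Omega2}, \ref{HopfAlg:Omega_4}, \ref{HopfAlg:A_6}, \ref{HopfAlg:A_7}). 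The one caveat is that you assert generation in degree one as automatic for finite Cartan type, whereas the coradical here is $H_8$ rather than an abelian group algebra, so the off-the-shelf diagonal-type results do not literally apply and the paper proves this step separately via the graded-dual argument of \cite{MR1780094}, checking case by case that the defining relations are primitive and are killed by finite-dimensionality.
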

$\mathfrak{A}_1(n_1,n_2,n_3,n_4;I_1)$ comprises two nonisomorphic nonpointed self-dual Hopf algebras of dimension
$16$ with coradical $H_8$ described  in
\cite{MR2037722} as special cases.  Except $\mathfrak{B}[\Omega_2(n_1, n_2)]\# H_8$, the remainder four families of Hopf algebras contain non-trial lifting relations.

 \section{Preliminaries}\label{Preliminaries}
 \subsection{Conventions}
Let $H$ be a Hopf algebra over $\mathbb{K}$, with antipode 
$S$. We will use Sweedler's notation $\Delta(h)=h_{(1)}\otimes 
h_{(2)}$ for the comultiplication \cite{montgomery1993hopf}. Let ${}_{H}^{H}\mathcal{YD}$  be the category of left \emph{Yetter-Drinfel'd 
modules} over $H$. That is to say that if $M$ is an object of 
 ${}_{H}^{H}\mathcal{YD}$ if and only if there exists 
 an action $\cdot$ such that $(M,\cdot)$ is a left 
 $H$-module and a coaction  $\rho$ such that $(M,\rho)$
 is a left $H$-comodule, subject to the following compatibility condition:
 \begin{equation}
 \rho(h\cdot m)=h_{(1)}m_{(-1)}S
 (h_{(3)})\otimes h_{(2)} \cdot m_{(0)}, \forall m\in M, h\in H,
 \end{equation}
 where $\rho(m)=m_{(-1)}\otimes m_{(0)}$. It is a   braided monoidal category. 
% If $V,W\in {}_{H}^{H}\mathcal{YD}$, then the tensor product is the vector space $V \otimes W$ with diagonal action and coaction given by
%$h \cdot (v\otimes w)=h_{(1)} \cdot v\otimes h_{(2)} \cdot w,
% \rho(v \otimes w) = v_{(-1)}w_{(-1)} \otimes  v_{(0)} \otimes w_{(0)}$,
The braiding $c\in \mathrm{End}_\mathbb{K}(M\otimes M)$ of $M$ is defined by 
$ c(v\otimes w)=v_{(-1)} \cdot w\otimes v_{(0)}$, and the inverse braiding is defined by 
$c^{-1}(v\otimes w)=w_{(0)}\otimes \left(S^{-1}(w_{(-1)})\cdot v\right)$.

\begin{definition}\cite[Definition. 2.1]{andruskiewitsch2001pointed}%\cite[Def. 2.1]{AS} 
\label{defNicholsalgebra}
Let $H$ be a Hopf algebra and $V \in \ydh$. A braided $\mathbb{N}$-graded 
Hopf algebra $R = \bigoplus_{n\geq 0} R(n) \in \ydh$  is called 
the \textit{Nichols algebra} of $V$ if 
\begin{enumerate}
 \item[(i)] $\mathbb{K}\simeq R(0)$, $V\simeq R(1) \in \ydh$,
 \item[(ii)] $R(1) = \mathcal{P}(R)
=\{r\in R~|~\Delta_{R}(r)=r\otimes 1 + 1\otimes r\}$.
 \item[(iii)] $R$ is generated as an algebra by $R(1)$.
\end{enumerate}
In this case, $R$ is denoted by $\mathfrak{B}(V) = \bigoplus_{n\geq 0} \mathfrak{B}^{n}(V) $.    
\end{definition}
\begin{remark}
The Nichols algebra 
$\mathfrak{B}(V)$ is completely determined by the braiding.
Let $\mathfrak{B}(M)$ denote the Nichols algebra generated by $M\in {}_{H}^{H}\mathcal{YD}$. More precisely, as proved in  \cite{MR1396857} and
noted in \cite{andruskiewitsch2001pointed},
$$\mathfrak{B}(M)=\mathrm{K}\oplus M\oplus\bigoplus\limits_{n=2}^\infty M^{\otimes n} /
\ker\mathfrak{S}_n=T(M)/\ker\mathfrak{S},$$
where $\mathfrak{S}_{n,1}\in \mathrm{End}_\mathbb{K}\left(M^{\otimes (n+1)}\right)$, 
$\mathfrak{S}_{n}\in \mathrm{End}_\mathbb{K}\left(M^{\otimes n}\right)$,
$$\mathfrak{S}_{n,1}\coloneqq\mathrm{id}+c_n+c_nc_{n-1}+\cdots+c_nc_{n-1}\cdots c_1=\mathrm{id}+c_n\mathfrak{S}_{n-1,1}$$
$$\mathfrak{S}_1\coloneqq\mathrm{id}, \quad \mathfrak{S}_2\coloneqq\mathrm{id}+c, \quad
\mathfrak{S}_n\coloneqq(\mathfrak{S}_{n-1}\otimes \mathrm{id})\mathfrak{S}_{n-1,1}.$$
\end{remark}
%Let $m\in M^{\otimes n}$ and $\{w_j|j=1,\cdots,p\}$ be a basis of $M$, $\mathfrak{S}_{n-1,1}(m)\coloneqq\sum\limits_{j=1}^n Y_{w_j}^R(m)\otimes w_j$, so $\mathfrak{S}_n(m)=\sum\limits_{j=1}^p \mathfrak{S}_{n-1}\left(Y_{w_j}^R(m)\right)\otimes w_j$. $Y_{w_j}^R$ is called right skew-differential operator \cite{MR2207786}.
%
%\begin{lemma}\label{criteria}
%Let $m\in M^{\otimes n}$, then $m\in\mathrm{ker}~\mathfrak{S}_n$ iff $Y_{w_i}^R(m)\in\mathrm{ker}~\mathfrak{S}_{n-1}$.
%\end{lemma}
\begin{lemma}\label{TensorNicholsAlg}
(\cite[Theorem 2.2]{MR1779599}, \cite[Remark 1.4]{andruskiewitsch2016finite})
Let $M_1, M_2\in\ydh$ be both finite dimensional and assume 
$c_{M_1,M_2}c_{M_2,M_1}=\mathrm{id}_{M_2\otimes M_1}$.
Then $\mathfrak{B}(M_1\oplus M_2)\simeq  \mathfrak{B}(M_1)
\otimes \mathfrak{B}(M_2)$ as graded vector spaces and 
$\mathrm{GKdim}\,\mathfrak{B}(M_1\oplus M_2)=
\mathrm{GKdim}\,\mathfrak{B}(M_1)+
\mathrm{GKdim}\,\mathfrak{B}(M_2)$.
\end{lemma}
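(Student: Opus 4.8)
\noindent\emph{Proof proposal.} The plan is to build $\mathfrak{B}(M_1)\otimes\mathfrak{B}(M_2)$ as a braided graded Hopf algebra in $\ydh$ and then recognize it as the Nichols algebra of $M_1\oplus M_2$ via the characterization in Definition \ref{defNicholsalgebra}. First I would equip $A\coloneqq\mathfrak{B}(M_1)\otimes\mathfrak{B}(M_2)$ with the \emph{braided tensor product} structure of the category $\ydh$: the multiplication interchanges the middle factors through the braiding $c_{\mathfrak{B}(M_2),\mathfrak{B}(M_1)}$, so that on homogeneous elements $(u\otimes v)(u'\otimes v')=u\,(v_{(-1)}\cdot u')\otimes v_{(0)}\,v'$, while the comultiplication is $(\mathrm{id}\otimes c_{\mathfrak{B}(M_1),\mathfrak{B}(M_2)}\otimes\mathrm{id})(\Delta\otimes\Delta)$. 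For \emph{any} cross braiding this makes $A$ a connected $\mathbb{N}$-graded braided Hopf algebra in $\ydh$, generated in degree one because each $\mathfrak{B}(M_i)$ is. As a graded vector space $A$ is literally $\mathfrak{B}(M_1)\otimes\mathfrak{B}(M_2)$, with $A(0)\simeq\mathbb{K}$ and $A(1)\simeq M_1\oplus M_2$, so conditions (i) and (iii) of Definition \ref{defNicholsalgebra} hold automatically.

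The crux is condition (ii), that $\mathcal{P}(A)=A(1)=M_1\oplus M_2$. Here the hypothesis $c_{M_1,M_2}c_{M_2,M_1}=\mathrm{id}_{M_2\otimes M_1}$ enters decisively: it says the two blocks braid symmetrically, so the cross-terms in $\Delta_A$ degenerate and the factors behave as commuting subalgebras. Using that $\mathfrak{B}(M_1)$ and $\mathfrak{B}(M_2)$ are connected graded with primitives concentrated in degree one, I would compute the coproduct of a homogeneous element $u\otimes v\in A$ and show that the primitivity equation $\Delta_A(u\otimes v)=(u\otimes v)\otimes1+1\otimes(u\otimes v)$ forces $u\otimes v\in M_1\otimes1\ \oplus\ 1\otimes M_2$; concretely one verifies $\mathcal{P}\bigl(\mathfrak{B}(M_1)\,\underline{\otimes}\,\mathfrak{B}(M_2)\bigr)=\mathcal{P}(\mathfrak{B}(M_1))\otimes1\ \oplus\ 1\otimes\mathcal{P}(\mathfrak{B}(M_2))$ whenever the cross braiding is an involution. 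By the uniqueness of the Nichols algebra attached to a braided object, this gives $\mathfrak{B}(M_1\oplus M_2)\simeq A=\mathfrak{B}(M_1)\otimes\mathfrak{B}(M_2)$ as graded objects of $\ydh$, which is the first assertion.

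For the dimension statement I would pass to Hilbert series. The graded isomorphism just obtained yields $\dim\mathfrak{B}^n(M_1\oplus M_2)=\sum_{i+j=n}\dim\mathfrak{B}^i(M_1)\,\dim\mathfrak{B}^j(M_2)$, so the Hilbert series of $\mathfrak{B}(M_1\oplus M_2)$ is the product of those of $\mathfrak{B}(M_1)$ and $\mathfrak{B}(M_2)$. Since the Gelfand--Kirillov dimension of a connected graded algebra is the order of polynomial growth of the partial sums $\sum_{k\le n}\dim\mathfrak{B}^k$, and these partial sums convolve under the product of Hilbert series, the growth orders add, giving $\mathrm{GKdim}\,\mathfrak{B}(M_1\oplus M_2)=\mathrm{GKdim}\,\mathfrak{B}(M_1)+\mathrm{GKdim}\,\mathfrak{B}(M_2)$.

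I expect the genuine obstacle to be condition (ii). The braided tensor product $A$ is a Hopf algebra in $\ydh$ for every cross braiding, and it always admits a canonical surjection onto $\mathfrak{B}(M_1\oplus M_2)$; it coincides with the Nichols algebra precisely when its primitives do not exceed degree one, i.e.\ when that surjection is injective. Controlling the braided coproduct across the two factors so as to exclude spurious primitives in degrees $\ge2$ is exactly where the involutivity $c_{M_1,M_2}c_{M_2,M_1}=\mathrm{id}$ is indispensable, and this is the substance of the theorem of Gra\~na cited above.
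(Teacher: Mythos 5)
The paper itself gives no proof of this lemma; it is imported verbatim from Gra\~na \cite[Theorem 2.2]{MR1779599} and \cite[Remark 1.4]{andruskiewitsch2016finite}, so your proposal has to be measured against the argument in those references, whose overall strategy (realize $\mathfrak{B}(M_1)\otimes\mathfrak{B}(M_2)$ as a braided Hopf algebra and identify it with $\mathfrak{B}(M_1\oplus M_2)$ via the characterization of Nichols algebras) you have correctly reconstructed. However, your write-up contains a genuine error, and it sits exactly at the step you declare to be automatic. You assert twice that $A=\mathfrak{B}(M_1)\,\underline{\otimes}\,\mathfrak{B}(M_2)$, with braided tensor product multiplication and comultiplication, is a Hopf algebra in $\ydh$ \emph{for any} cross braiding. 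This is false: in a genuinely braided category the braided tensor product of two bialgebras fails the compatibility axiom (multiplicativity of $\Delta$) unless the two factors centralize each other. Concretely, take $u\in M_1$, $v\in M_2$ with $c(u\otimes v)=q\,v\otimes u$ and $c(v\otimes u)=q'\,u\otimes v$ (such pairs occur already for diagonal braidings in $\ydh$). Then $(1\otimes v)(u\otimes 1)=q'\,u\otimes v$, so $\Delta_A\bigl((1\otimes v)(u\otimes 1)\bigr)$ contains the term $q'q\,(1\otimes v)\otimes(u\otimes 1)$, whereas expanding $\Delta_A(1\otimes v)\,\Delta_A(u\otimes 1)$ produces that term with coefficient $1$; hence $\Delta_A$ is an algebra map only if $qq'=1$. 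So the hypothesis $c_{M_1,M_2}c_{M_2,M_1}=\mathrm{id}$ is not first needed in your ``crux'' step (ii): it is needed (after extending it from $M_1,M_2$ to the whole Nichols algebras, using the hexagon axioms and generation in degree one --- a step you should also make explicit) merely for $A$ to be a braided bialgebra at all. Without it, your entire framework, including the ``canonical surjection onto $\mathfrak{B}(M_1\oplus M_2)$'' invoked in your last paragraph, does not exist.

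Ironically, the step you single out as the substance of the theorem is the routine one. Once $A$ is known to be a Hopf algebra in $\ydh$, the identity $\mathcal{P}(A)=\mathcal{P}(\mathfrak{B}(M_1))\otimes 1\,\oplus\,1\otimes\mathcal{P}(\mathfrak{B}(M_2))$ follows from a counit projection argument that uses only the braided tensor \emph{coalgebra} structure and no involutivity: applying $\mathrm{id}\otimes\varepsilon\otimes\varepsilon\otimes\mathrm{id}$ to the primitivity equation forces a primitive $t$ to lie in $\mathfrak{B}(M_1)\otimes 1+1\otimes\mathfrak{B}(M_2)$, and then projecting onto the two diagonal blocks reduces to primitivity in each factor, which is degree one since each factor is a Nichols algebra. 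Finally, your Gelfand--Kirillov dimension argument is too quick: $\mathrm{GKdim}$ is defined by a $\varlimsup$, and additivity under tensor products can genuinely fail, as the paper itself recalls in its preliminaries. Here it is rescued by the graded isomorphism together with \cite[Proposition 3.11]{MR1721834} (the degree filtration provides a GK-deterministic subspace); this should be cited rather than asserted as an exercise in convolving Hilbert series.
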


\begin{proposition}(\cite[Radford biproduct]{radford1985structure})
$H$ is a Hopf algebra. Let $A\in{}_H^H\mathcal{YD}$ be a braided 
Hopf algebra. Then $A\# H$ is a 
Hopf algebra.
\begin{eqnarray}
(a\# h)(a^\prime\# h^\prime)&=&\sum
a(h_{(1)}\cdot a^\prime)\# h_{(2)}h^\prime,\quad
a,a^\prime\in A, h,h^\prime\in H
\\
\Delta(a\# h)&=&\sum\left[a_{(1)}\#
(a_{(2)})_{(-1)}h_{(1)}\right]\otimes
\left[(a_{(2)})_{(0)}\# h_{(2)}\right]
\\
S(a\# h)&=&\sum
\left(1\# S_H(h)
S_H(a_{(-1)})\right)
\left(S_A(a_{(0)})\# 1\right)
\end{eqnarray}
\end{proposition}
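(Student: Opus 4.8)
The plan is to recognise $A\#H$ as the combination of a smash product (carrying the algebra structure) and a smash coproduct (carrying the coalgebra structure), and then to verify the bialgebra and antipode axioms, with the Yetter-Drinfel'd compatibility condition doing the essential work of glueing the two halves together. Throughout I write the action as $h\cdot a$, the coaction as $\rho(a)=a_{(-1)}\otimes a_{(0)}$, and I recall that the braiding of $\ydh$ is $c(a\otimes a')=a_{(-1)}\cdot a'\otimes a_{(0)}$.

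First I would treat the algebra structure. Since $A$ is an algebra in $\ydh$, its multiplication and unit are morphisms in the category; forgetting the coaction, $A$ is an ordinary $H$-module algebra. Hence the first formula is exactly the multiplication of the smash product of the $H$-module algebra $A$, which is well known to be associative with unit $1_A\#1_H$; this step is routine and uses only the module structure. Dually, $A$ is a coalgebra in $\ydh$, so forgetting the action it is an $H$-comodule coalgebra, and the second formula is the smash coproduct. Coassociativity and counitality (with counit $\varepsilon_A\otimes\varepsilon_H$) then follow by the argument dual to the smash product, using only the comodule structure.

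The main step, and the main obstacle, is the bialgebra compatibility: that $\Delta$ is an algebra map for the two structures just described. Here I would expand both $\Delta\big((a\#h)(a'\#h')\big)$ and $\Delta(a\#h)\Delta(a'\#h')$ and match them. Three ingredients are needed simultaneously: first, that $A$ is a \emph{braided} bialgebra, so $\Delta_A(aa')=\Delta_A(a)\Delta_A(a')$ computed with the braided product on $A\otimes A$, which by the formula for $c$ introduces precisely one factor of the $H$-action; second, the Yetter-Drinfel'd condition $\rho(h\cdot m)=h_{(1)}m_{(-1)}S(h_{(3)})\otimes h_{(2)}\cdot m_{(0)}$, which is exactly what is required to move the coaction $\rho$ past the action of $H$ when reordering the tensor factors; and third, the module-algebra and comodule-coalgebra axioms, used to split and recombine the $H$-factors. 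Organising this as a string-diagram computation in the braided category $\ydh$ (in the style of Majid) keeps the bookkeeping manageable, since each appearance of the braiding is one crossing that is resolved by the formula for $c$ together with the YD axiom. I expect this matching to be the longest and most delicate part; the counit being an algebra map, by contrast, is immediate.

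Finally I would establish the antipode. Using that $A$ carries a braided antipode $S_A$ with $m_A(S_A\otimes\mathrm{id})\Delta_A=u_A\varepsilon_A=m_A(\mathrm{id}\otimes S_A)\Delta_A$ in $\ydh$, together with the antipode $S_H$ of $H$, I would check directly that the displayed formula for $S(a\#h)$ is a convolution inverse of the identity, that is, that $S*\mathrm{id}=u\varepsilon=\mathrm{id}*S$ in $\mathrm{End}(A\#H)$. After applying $\Delta$ this reduces to the braided antipode identity for $A$ and the antipode identity for $H$, again mediated by the YD condition. Alternatively, one may invoke Radford's projection theorem via the canonical maps $H\hookrightarrow A\#H$ and $A\#H\twoheadrightarrow H$ to produce the antipode abstractly and then identify it with the given formula.
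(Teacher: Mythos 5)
The paper contains no proof of this proposition: it is quoted as a known result with a citation to Radford's 1985 paper, and the surrounding text only records the standard facts about the maps $\iota$, $\pi$ and the coinvariants $A=(A\#H)^{\mathrm{co}\,\pi}$. So there is no in-paper argument to compare against; the benchmark is the proof in the cited literature, and your outline follows it faithfully. Your decomposition --- smash product for the algebra structure (using only that $A$ is an $H$-module algebra), smash coproduct for the coalgebra structure (using only that $A$ is an $H$-comodule coalgebra), the Yetter--Drinfel'd condition together with the braided-bialgebra axiom $\Delta_A(aa')=\Delta_A(a)\Delta_A(a')$ (product taken in $A\,\underline{\otimes}\,A$) carrying the burden of showing $\Delta$ is multiplicative on $A\#H$, and then a direct convolution-inverse check for the displayed antipode --- is exactly the standard verification of Radford, equivalently Majid's diagrammatic bosonization argument. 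Two caveats. First, your ``alternative'' route to the antipode via Radford's projection theorem is circular: that theorem takes as input a Hopf algebra $B$ (already equipped with an antipode) together with $\pi,\iota$ and produces the braided Hopf algebra $B^{\mathrm{co}\,\pi}$; it cannot be invoked to produce the antipode of $A\#H$, since whether $A\#H$ has an antipode is precisely what is being proved. Keep the direct verification (or the generators-plus-anti-homomorphism argument: show $S$ is an algebra anti-homomorphism and check $S*\mathrm{id}=u\varepsilon=\mathrm{id}*S$ on $A\#1$ and $1\#H$) as the actual proof. Second, your central step is announced (``expand both sides and match'') rather than executed; in a complete write-up that computation, where each crossing is resolved by $c(a\otimes a')=a_{(-1)}\cdot a'\otimes a_{(0)}$ and the YD axiom, is the entire content of the theorem.
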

The map $\iota: H \to A\#H$ given by $\iota(h) = 1\#h$ for all
$h\in H$ is an injective Hopf algebra map, and the map $\pi: A\#H \to H$
given by $\pi(a\#h) = \varepsilon_{A}(a)h$ for all $a\in A$, $h\in H$
is a surjective Hopf algebra map such that $\pi \circ \iota = \mathrm{id}_{H} $.
Moreover, it holds that $A= (A\#H)^{\mathrm{co}\, \pi}$.

Conversely, let $B$ be a Hopf algebra with bijective antipode and
$\pi: B\to H$ a Hopf algebra epimorphism admitting
a Hopf algebra section $\iota: H\to B$ such that $\pi\circ\iota =\mathrm{id}_{H}$.
Then $A=B^{\mathrm{co}\,\pi}$ is a braided Hopf algebra in $\ydh$ and $B\simeq A\# H$
as Hopf algebras.

\subsection{GK-dimension}
Let $A$ be a finitely generated algebra over a field $\mathbb{K}$, and assume $a_1, \cdots , a_m$ generate
$A$. Set $V$ to be the span of $a_1, \cdots, a_m$, and denote $V^n$ the span of all monomials in the
$a_i$'s of length $n$. As $a_i$'s generate $A$ one has $A =\bigcup_{k=0}^\infty A_k $ where $A_k =\mathbb{K}+V +V^2+\cdots+V^k$.
The function $d_V (n) = \dim A_n$ is the
growth function of $A$.
The \emph{Gelfand-Kirillov dimension} of a $\mathbb{K}$-algebra $A$ is 
%\begin{equation}
$\mathrm{GKdim}~A=\varlimsup\mathrm{log}_n~d_V(n)$.
%\end{equation}
 $\mathrm{GKdim}~A$ does not depend on the choice of $V$.
Suppose that $\mathrm{GKdim} ~A< \infty$. We say that a finite-dimensional subspace
$V\subseteq A$ is  \emph{$GK$-deterministic} if 
\begin{equation}
\mathrm{GKdim} ~A = \lim_{n\rightarrow \infty} \log_n \dim
\sum_{0\leq j\leq n} V^n.
\end{equation}
Clearly, if $V$ is a $GK$-deterministic subspace of $A$, then any finite-dimensional
subspace of $A$ containing $V$ is $GK$-deterministic.
Let $A$ and $B$ be two algebras. Then
\begin{equation}
\mathrm{GKdim}(A \otimes B) \leq \mathrm{GKdim}~ A + \mathrm{GKdim} ~B,
\end{equation}
but the equality does not hold in general. For instance, it does hold when A
or B has a GK-deterministic subspace, see \cite[Proposition 3.11]{MR1721834}. The Gelfand-Kirillov dimension is a  useful tool in Ring theory and Hopf algebraic theories. We shall not discuss in detail its importance but we refer the reader to \cite{MR1721834}
as a basic reference and  \cite{MR3490761,MR3061686,MR2661247,andruskiewitsch2016finite} for additional informations related with Hopf algebras.

%\subsection{Lifting method}
%Let $A$ be a Hopf algebra whose coradical $H$ is a Hopf subalgebra. It is well-known that the associated
%graded Hopf algebra of $A$ is isomorphic to $R\#H$ where
%$R = \oplus_{n\in \mathbb{N}_0}R^n$ is a braided Hopf algebra in the category $\ydh$ of Yetter-Drinfield modules over $H$. As explained in \cite{andruskiewitsch2001pointed}, to classify finite-dimensional Hopf algebras $A$ whose coradical is isomorphic to $H$ we have to deal with the following questions:
%\begin{enumerate}
%\renewcommand{\theenumi}{\alph{enumi}}
%\renewcommand{\labelenumi}{(\theenumi)}
%  \item\label{que:nichols-fd} Determine all Yetter-Drinfield modules $V$ over $H$ such that the Nichols algebra $\mathfrak{B}(V)$ has finite dimension; find an efficient set of relations for $\mathfrak{B}(V)$.
%
%  \item\label{que:nichols-R} If $R = \oplus_{n\in \mathbb N_0}R^n$ is a finite-dimensional Hopf algebra in $\ydh$ with $V = R^1$, decide if $R \simeq \mathfrak{B}(V)$ (generation in degree one).
%
%  \item\label{que:lifting} Given $V$ as in \eqref{que:nichols-fd}, classify all $A$ such that $\mathrm{gr} A \simeq \mathfrak{B}(V)\#H$ (lifting).
%\end{enumerate}
\section{Simple Yetter-Drinfel'd modules of $H_8$}\label{YDMH8}
Recall that the neither commutative nor cocommutative  semisimple  $8$-dimensional
Hopf algebra $H_8$ in  \cite{MR1357764} is constructed as an extension
of $\mathbb{K}[C_2 \times C_2]$ by $\mathbb{K}[C_2]$. A basis for $H_8$ is given by $\{1, x, y, xy = yx, z, xz,yz, xyz\}$ with the relations $$x^{2} =y^{2}= 1,\quad z^{2} = \frac{1}{2}(1+x+y-xy), \quad xy = yx,\quad zx=yz,\quad zy=xz.$$
The coalgebra structure and the antipode are defined by
$$
\Delta(x) = x \otimes x, \quad \Delta(y) = y \otimes y, \quad \varepsilon(x) = \varepsilon(y)=1, \quad S(x) = x, \quad S(y) = y,$$
$$
\Delta(z) =\frac{1}{2} (1 \otimes 1 + 1 \otimes x+ y \otimes 1- y \otimes x )(z\otimes z), \quad \varepsilon(z) = 1,
\quad S(z) = z.
$$
%Another basis for this Hopf algebra was given in \cite{MR1800713} where a general construction for a family of cosemisimple Hopf algebras $\mathcal{B}_m$ of dimension $4m$ was obtained. An explicit isomorphism between the two bases was established in \cite{MR2037722}. 
The automorphism group of $H_8$ is the Klein four-group \cite{MR2879228}. These automorphisms are given in Table \ref{autotableforH8}, which are going to be used in Corollary \ref{Isomorphism:B(V)H_8}.
\begin{center}
\begin{table}
\begin{tabular}{|c||c|c|c|c|}
\hline
& $1$ & $x$& $y$& $z$\\
\hline
$\tau_1=\mathrm{id} $& $1$ & $x$& $y$& $z$\\
$\tau_2$& $1$ & $x$& $y$ & $xyz$\\
$\tau_3$& $1$ & $y$& $x$& $\frac{1}{2}\left(z+xz+yz-xyz\right)$\\
$\tau_4$& $1$ & $y$& $x$& $\frac{1}{2}\left(-z+xz+yz+xyz\right)$\\
\hline
\end{tabular}
\vspace{3ex}
\caption{Automorphisms of $H_8$}
\label{autotableforH8}
\end{table}
\end{center}

Denote a set of central orthogonal idempotents of $H_8$ as 
$$e_1=\frac{1}{8}(1+x)(1+y)(1+z),\quad e_2=\frac{1}{8}(1+x)(1+y)(1-z),$$
$$e_3=\frac{1}{8}(1-x)(1-y)(1+\mi\, z),\quad e_4=\frac{1}{8}(1-x)(1-y)(1-\mi\,z),$$
$$e_5=\frac{1-xy}{2}, \quad e_je_k=\delta_{jk},\quad j,k=1,\cdots, 5;  \mi=\sqrt{-1}.$$
And denote idempotents $e_5^\prime=\frac{1}{4}(1-xy)(1+z), 
e_5^{\prime\prime}=\frac{1}{4}(1-xy)(1-z)$, then 
\begin{eqnarray*}
H_8 &=& H_8e_1\oplus H_8e_2\oplus H_8e_3\oplus H_8e_4\oplus H_8e_5\\
&=& H_8e_1\oplus H_8e_2\oplus H_8e_3\oplus H_8e_4\oplus(H_8e_5^{\prime}+ H_8e_5^{\prime\prime}),
\end{eqnarray*}
where $H_8e_5^{\prime}\simeq H_8e_5^{\prime\prime}$ as left $H_8$-module, via $e_5^{\prime}\mapsto xe_5^{\prime\prime}, xe_5^{\prime}\mapsto e_5^{\prime\prime}$.
\begin{definition}
Denote $V_1(b)\coloneqq\mathbb{K}\{v|x\cdot v=b^2v, y\cdot v=b^2v, z\cdot v=bv, b\in \{\pm 1, \pm \mi\}\}$, where $v$ is a vector. Let $V_2\simeq H_8e_5^{\prime}$ as left $H_8$-module, the actions of the generators are given by $$x\mapsto \begin{pmatrix}
    0  &    1\\
    1  &  0
\end{pmatrix}, \quad 
y\mapsto \begin{pmatrix}
    0  &    -1\\
    -1  &  0
\end{pmatrix}, \quad 
z\mapsto \begin{pmatrix}
    1 &    0\\
    0  &  -1
\end{pmatrix}.$$
\end{definition}
\begin{proposition}
All simple left modules of $H_8$ are classified by
$V_1(b), V_2, b\in \{\pm 1, \pm \mi\}.$
\end{proposition}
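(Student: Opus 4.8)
The plan is to establish the classification by analyzing the representation theory encoded in the Wedderburn decomposition of $H_8$ already displayed above. Since $H_8$ is an $8$-dimensional semisimple algebra over the algebraically closed field $\mathbb{K}$, the Artin-Wedderburn theorem gives $H_8\simeq \prod_i M_{d_i}(\mathbb{K})$ with $\sum_i d_i^2=8$, and the simple modules are in bijection with the matrix factors, each of dimension $d_i$. So the first step is simply to read off the block structure from the central orthogonal idempotents $e_1,\dots,e_5$: the four idempotents $e_1,e_2,e_3,e_4$ are primitive (they generate $1$-dimensional two-sided ideals, hence one-dimensional simple modules), while $e_5=\frac{1-xy}{2}$ is \emph{not} primitive, since $e_5=e_5'+e_5''$ splits it further and $H_8e_5'\simeq H_8e_5''$ as left modules.

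Next I would verify that $V_1(b)$ for the four values $b\in\{\pm1,\pm\mi\}$ really are the four one-dimensional simples attached to $e_1,e_2,e_3,e_4$. On a one-dimensional module the central element $z$ acts by a scalar; the relation $z^2=\frac12(1+x+y-xy)$ forces $z$ to act by $\pm1$ when $x,y$ act by $+1$ (recovering $e_1,e_2$), and by $\pm\mi$ when $x,y$ act by $-1$ (recovering $e_3,e_4$), which is exactly the defining prescription $x\cdot v=b^2v$, $y\cdot v=b^2v$, $z\cdot v=bv$. A quick consistency check that these four one-dimensional assignments are genuine $H_8$-modules (respecting $zx=yz$, $zy=xz$, etc.) confirms $V_1(b)$ is simple of dimension one for each $b$. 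For the remaining block I would show $H_8e_5$ has dimension $4$, splits as $H_8e_5'\oplus H_8e_5''$ into two isomorphic copies of a $2$-dimensional module, and that the explicit matrices displayed for $x,y,z$ furnish an irreducible representation $V_2$; irreducibility follows because the given matrices for $x$ and $z$ generate the full matrix algebra $M_2(\mathbb{K})$ (they have no common eigenvector).

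Finally I would assemble the dimension count: four simples of dimension $1$ and one simple of dimension $2$ give $4\cdot 1^2+1\cdot 2^2=8=\dim H_8$, so by semisimplicity and the Artin-Wedderburn dimension formula these exhaust all isomorphism classes of simple left $H_8$-modules. This bookkeeping closes the argument and shows the list $V_1(\pm1),V_1(\pm\mi),V_2$ is complete.

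The main obstacle I anticipate is not the dimension count, which is routine once the block structure is known, but rather the verification of irreducibility and of the module structure on $V_2$: one must check that the prescribed matrices for $x,y,z$ satisfy all the defining relations of $H_8$ (in particular $z^2=\frac12(1+x+y-xy)$ and $zx=yz$, $zy=xz$) and then confirm the isomorphism $V_2\simeq H_8e_5'$ of left modules, which requires matching the action on an explicit basis of $H_8e_5'$. Everything else reduces to reading off the already-given idempotent decomposition.
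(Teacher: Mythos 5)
Your proposal is correct and follows essentially the same route as the paper: the proposition there is stated immediately after the display of the central orthogonal idempotents $e_1,\dots,e_5$ and the left-module decomposition $H_8=H_8e_1\oplus\cdots\oplus H_8e_4\oplus(H_8e_5'+H_8e_5'')$ with $H_8e_5'\simeq H_8e_5''$, which is exactly the Wedderburn block structure you read off. The only difference is that you spell out the routine verifications the paper leaves implicit — primitivity of $e_1,\dots,e_4$, the check that the displayed matrices for $x,y,z$ satisfy the defining relations and give an irreducible module $V_2\simeq H_8e_5'$, and the count $4\cdot 1^2+2^2=8=\dim H_8$.
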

\begin{remark}
Thanks to the referee's reminder, the result was also obtained in \cite{MR1987013}
under a different basis.
\end{remark}
In the remaining part of the article, $V_1(b)$ and $V_2$ always mean a simple left $H_8$-module.

\begin{lemma} (\cite[Proposition 2]{radford2003oriented})
\label{constructionLRYD}
Let $H$ is a bialgebra over  field $\mathbb{K}$ and suppose $S$ is the antipode of $H$.
\begin{enumerate}
\item\label{que:LRYDM} If $L\in{}_{H}\mathcal{M}$, then $L\otimes H\in{}_H\mathcal{YD}^{H}$, the module and comodule actions are given by  
\begin{equation*}
h\cdot(\ell\otimes a)=h_{(2)}\cdot\ell\otimes h_{(3)}aS^{-1}(h_{(1)}),~~\rho(\ell\otimes h)=(\ell\otimes h_{(1)})\otimes h_{(2)},
\forall h, a\in H, \ell\in L.
\end{equation*}
Let $M\in {}_H \mathcal{YD}^H$ .
\item 
Suppose that $L \in {}_H\mathcal{M}$ and $p: M\longrightarrow L$ is a map of left $H$-modules. Then the linear
map $f :M \longrightarrow L \otimes H$ defined by 
$f (m) = p(m_{(0)})\otimes m_{(1)}$ for all $m\in M$ is a map of
Yetter-Drinfel'd $H$-modules, where $L \otimes H$ has the structure described in part \eqref{que:LRYDM}.
Furthermore $\ker f$ is the largest Yetter-Drinfel'd $H$-submodule, indeed the largest
subcomodule, contained in $\ker p$.
\item $M$ is isomorphic to a Yetter-Drinfel'd 
%$H$-
submodule of some $L\otimes H$ described in above.
%part \eqref{que:LRYDM}.
\end{enumerate}
\end{lemma}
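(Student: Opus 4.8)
The plan is to prove the three assertions in turn, with essentially all of the computational weight resting on part \ref{que:LRYDM}; parts (2) and (3) will then follow formally. Recall that an object of ${}_H\mathcal{YD}^H$ is a left $H$-module and right $H$-comodule $M$, with $\rho(m)=m_{(0)}\otimes m_{(1)}$, satisfying the compatibility
$$\rho(h\cdot m)=h_{(2)}\cdot m_{(0)}\otimes h_{(3)}\,m_{(1)}\,S^{-1}(h_{(1)}),$$
and that a morphism is a map which is simultaneously $H$-linear and $H$-colinear. So for part \ref{que:LRYDM} I would first check, almost formally, that the stated formulas make $L\otimes H$ a left $H$-module and a right $H$-comodule. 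The coaction $\rho=\mathrm{id}_L\otimes\Delta$ is coassociative and counital simply because $\Delta$ is. Unitality of the action is immediate, and associativity is a short direct computation: expanding $h\cdot\bigl(k\cdot(\ell\otimes a)\bigr)$ and $(hk)\cdot(\ell\otimes a)$ and comparing, the two agree once one uses the module axiom on $L$ and the fact that $S^{-1}$ is an algebra anti-homomorphism, so that $S^{-1}(h_{(1)}k_{(1)})=S^{-1}(k_{(1)})S^{-1}(h_{(1)})$.

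The heart of the argument, and the step I expect to be the main obstacle, is verifying the Yetter--Drinfel'd compatibility for $L\otimes H$. I would compute both sides on a generator $m=\ell\otimes a$ by carefully expanding iterated coproducts. For the left-hand side one applies $\rho$ to $h\cdot(\ell\otimes a)=h_{(2)}\cdot\ell\otimes h_{(3)}\,a\,S^{-1}(h_{(1)})$, which requires coproducting the factor $h_{(3)}\,a\,S^{-1}(h_{(1)})$; the delicate point is that $S^{-1}$ is an anti-coalgebra map, so $\Delta(S^{-1}(c))=S^{-1}(c_{(2)})\otimes S^{-1}(c_{(1)})$. After relabelling by a fivefold coproduct of $h$ and a twofold coproduct of $a$, both sides should collapse to
$$\bigl(h_{(3)}\cdot\ell\otimes h_{(4)}\,a_{(1)}\,S^{-1}(h_{(2)})\bigr)\otimes h_{(5)}\,a_{(2)}\,S^{-1}(h_{(1)}),$$
the right-hand side arising from $m_{(0)}=\ell\otimes a_{(1)}$, $m_{(1)}=a_{(2)}$ together with one further expansion of the module action of part \ref{que:LRYDM}. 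The bookkeeping of the five tensor legs of $h$ is the part that must be carried out with care.

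For part (2) I would verify separately that $f(m)=p(m_{(0)})\otimes m_{(1)}$ is colinear and linear. Colinearity is a direct consequence of coassociativity of $\rho$ on $M$, yielding $\rho\bigl(f(m)\bigr)=f(m_{(0)})\otimes m_{(1)}$. Linearity uses the compatibility in $M$ to rewrite $(h\cdot m)_{(0)}\otimes(h\cdot m)_{(1)}=h_{(2)}\cdot m_{(0)}\otimes h_{(3)}\,m_{(1)}\,S^{-1}(h_{(1)})$ and then the $H$-linearity of $p$ to pull $h_{(2)}$ through $p$; the result matches $h\cdot f(m)$ computed with the action of part \ref{que:LRYDM}. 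For the characterization of $\ker f$, applying $\mathrm{id}_L\otimes\varepsilon$ gives $(\mathrm{id}_L\otimes\varepsilon)\circ f=p$ by the counit axiom, so $\ker f\subseteq\ker p$, while $\ker f$ is a Yetter--Drinfel'd submodule because $f$ is a morphism. Finally, if $N\subseteq\ker p$ is any subcomodule, then for $n\in N$ one has $n_{(0)}\in N\subseteq\ker p$, whence $f(n)=p(n_{(0)})\otimes n_{(1)}=0$; thus every subcomodule contained in $\ker p$ lies in $\ker f$, which is the asserted maximality (in particular among Yetter--Drinfel'd submodules).

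Part (3) is then immediate: take $L=M$ as a left $H$-module and $p=\mathrm{id}_M$. By part (2) the map $f=\rho\colon M\to M\otimes H$ is a morphism of Yetter--Drinfel'd modules, and since $(\mathrm{id}_M\otimes\varepsilon)\circ\rho=\mathrm{id}_M$ it is injective, so $M$ is isomorphic to the Yetter--Drinfel'd submodule $\rho(M)\subseteq M\otimes H$. The only genuinely nontrivial computation in the whole lemma is therefore the compatibility check in part \ref{que:LRYDM}; everything else is extracted from the module/comodule axioms and the fact that $f$ is a morphism.
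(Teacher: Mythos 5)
Your proof is correct: the module/comodule axioms, the five-fold-coproduct verification of the Yetter--Drinfel'd compatibility for $L\otimes H$, the colinearity/linearity of $f$, the maximality of $\ker f$ among subcomodules of $\ker p$, and the specialization $L=M$, $p=\mathrm{id}_M$ in part (3) are all sound. Note that the paper itself gives no proof of this lemma --- it is quoted verbatim from Radford's cited Proposition 2 --- and your direct verification is essentially the standard argument from that source, so there is nothing to compare beyond confirming its correctness.
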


%As for the category ${}_{H}^{H}\mathcal{YD}$, similarly we have
%\begin{corollary}
%\label{constructionLLYD}
%Let $H$ be a bialgebra over field $\mathbb{K}$ and suppose $S$ is the antipode of $H$.\\
%(a) If $L\in{}_{H}\mathcal{M}$, then $L\otimes H\in{}_{H}^{H}\mathcal{YD}$, the module and comodule actions are given by $$h\cdot(\ell\otimes a)=h_{(2)}\cdot\ell\otimes h_{(1)}aS(h_{(3)}),\quad\rho(\ell\otimes h)= h_{(1)}\otimes(\ell\otimes h_{(2)}), \forall h, a\in H, \ell\in L.$$
%Let $M\in{}_H^{H}\mathcal{YD}$.\\
%(b) Suppose that $L\in{}_{H}\mathcal{M}$, $p: M\rightarrow L$ is a map of left $H$-comodules.  Then the linear map $f: M\rightarrow L\otimes H$ defined by $f(m)=p(m_{(0)})\otimes m_{(-1)}$ for all $m\in M$ is a map of Yetter-Drinfel'd $H$-modules, where $L\otimes H$ has the structure described in part (a). Furthermore $\mathrm{Ker} f$ is the largest Yetter-Drinfel'd $H$-submodule, indeed the largest subcomodule, contained in $\mathrm{Ker} p$.\\
%(c) $M$ is isomorphic to a Yetter-Drinfel'd $H$-submodule $L\otimes H$ described in part (a).
%\end{corollary}
Similarly according to Radford's method, any simple left Yetter-Drinfel'd module over $H_8$ could be constructed by the submodule of  tensor product of a left  module $V$ of $H_8$ and
$H_8$ itself, where the  module and comodule structures are given by :
\begin{eqnarray}
h\cdot (\ell\boxtimes g)=(h_{(2)}\cdot \ell)\boxtimes h_{(1)}gS(h_{(3)}),\label{eq:action}\\
\rho(\ell\boxtimes h)=h_{(1)}\otimes (\ell\boxtimes h_{(2)}) , \forall h, g,\in H_8, \ell\in V.
\label{eq:coaction}
\end{eqnarray}
Here we use $\boxtimes$ instead of $\otimes$ to avoid confusion by using too many symbals of the tensor product. We are going to construct all simple left Yetter-Drinfel'd modules over $H_8$ in this way. Keeping in mind that  $H_8$ is semisimple,  it's possibly being done. In fact, it is much easier than making use of the fact that ${}_{H}^{H}\mathcal{YD}\simeq 
{}_{\mathcal{D}\left(H_8^{cop}\right)}\mathcal{M}$. The following is a list of  useful formulae for looking for simple objects of ${}_{H_8}^{H_8}\mathcal{YD}$.
%\begin{eqnarray*}
%\Delta^{(2)}(z)=\frac{1}{4}[&
%1\otimes \tilde{1}\otimes 1+1\otimes \tilde{1}\otimes x+y\otimes \tilde{1}\otimes 1+
%y\otimes \tilde{1}\otimes x+\\
%+&1\otimes \tilde{x}\otimes x+1\otimes \tilde{x}\otimes 1-y\otimes \tilde{x}\otimes x-
%y\otimes \tilde{x}\otimes 1+\\
%+&1\otimes \tilde{y}\otimes 1-1\otimes \tilde{y}\otimes x+y\otimes \tilde{y}\otimes 1-
%y\otimes \tilde{y}\otimes x+\\
%+&1\otimes \widetilde{xy}\otimes x-1\otimes \widetilde{xy}\otimes 1-y\otimes \widetilde{xy}\otimes x+
%y\otimes \widetilde{xy}\otimes 1]\\
%&(z\otimes z\otimes z)\\
%=\frac{1}{4}[&
%z\otimes \tilde{z}\otimes z+z\otimes \tilde{z}\otimes xz+yz\otimes \tilde{z}\otimes z+
%yz\otimes \tilde{z}\otimes xz+\\
%+&z\otimes \widetilde{xz}\otimes xz+z\otimes \widetilde{xz}\otimes z-yz\otimes \widetilde{xz}\otimes xz-
%yz\otimes \widetilde{xz}\otimes z+\\
%+&z\otimes \widetilde{yz}\otimes z-z\otimes \widetilde{yz}\otimes xz+yz\otimes \widetilde{yz}\otimes z-
%yz\otimes \widetilde{yz}\otimes xz+\\
%+&z\otimes \widetilde{xyz}\otimes xz-z\otimes \widetilde{xyz}\otimes z-yz\otimes \widetilde{xyz}\otimes xz+
%yz\otimes \widetilde{xyz}\otimes z]
%\end{eqnarray*}
\begin{lemma}
\begin{eqnarray}
(\mathrm{id}^{\otimes 2}\otimes S)\Delta^{(2)}(z)=
\frac{1}{4}[&
(1+y)z\otimes z\otimes z(1+x)+(1-y)z\otimes xz\otimes z(1+x)+\\
+&(1+y)z\otimes yz\otimes z(1-x)+(y-1)z\otimes xyz\otimes z(1-x)]
\end{eqnarray}
\begin{eqnarray}
z_{(2)}\otimes z_{(1)}?S(z_{(3)})=
\frac{1}{4}[z\otimes (1+y)z?z(1+x)+
xz\otimes(1-y)z?z(1+x)+\\
+yz\otimes(1+y)z?z(1-x)+xyz\otimes(1-y)z?z(x-1)].
\end{eqnarray}
%$$z^2(1+x)(1+y)=(1+x)(1+y),\quad z^2(1+x)(1-y)=(1+x)(1-y)$$
%$$z^2(1-x)(1+y)=(1-x)(1+y),\quad z^2(1-x)(1-y)=-(1-x)(1-y)$$
\begin{eqnarray}
z_{(2)}\otimes z_{(1)}S(z_{(3)})=
\frac{1}{4}[z\otimes (1+x)(1+y)+
xz\otimes(1+x)(1-y)+\label{eq:1}\\
+yz\otimes(1-x)(1+y)+xyz\otimes(1-x)(1-y)],\\
z_{(2)}\otimes z_{(1)}xS(z_{(3)})=
\frac{1}{4}[z\otimes (1+x)(1+y)+
xz\otimes(1+x)(y-1)+\label{eq:x}\\
+yz\otimes(1-x)(1+y)+xyz\otimes(x-1)(1-y)],\\
z_{(2)}\otimes z_{(1)}yS(z_{(3)})=
\frac{1}{4}[z\otimes (1+x)(1+y)+
xz\otimes(1+x)(1-y)+\label{eq:y}\\
+yz\otimes(x-1)(1+y)+xyz\otimes(x-1)(1-y)],\\
z_{(2)}\otimes z_{(1)}xyS(z_{(3)})=
\frac{1}{4}[z\otimes (1+x)(1+y)+
xz\otimes(1+x)(y-1)+\label{eq:xy}\\
+yz\otimes(x-1)(1+y)+xyz\otimes(1-x)(1-y)],\\
z_{(2)}\otimes z_{(1)}zS(z_{(3)})=
\frac{1}{2}[z\otimes (1+y)z+xyz\otimes x(y-1)z],\label{eq:z}\\
z_{(2)}\otimes z_{(1)}xzS(z_{(3)})=
\frac{1}{2}[z\otimes (1+y)z+xyz\otimes x(1-y)z],\label{eq:xz}\\
z_{(2)}\otimes z_{(1)}yzS(z_{(3)})=
\frac{1}{2}[z\otimes x(1+y)z+xyz\otimes (y-1)z],\label{eq:yz}\\
z_{(2)}\otimes z_{(1)}xyzS(z_{(3)})=
\frac{1}{2}[z\otimes x(1+y)z+xyz\otimes (1-y)z].\label{eq:xyz}
\end{eqnarray}
\end{lemma}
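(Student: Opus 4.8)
The plan is to derive every identity from a single explicit computation of the iterated coproduct $\Delta^{(2)}(z)=z_{(1)}\otimes z_{(2)}\otimes z_{(3)}$, after which the remaining formulae follow by rearranging tensor factors and repeatedly invoking the defining relations of $H_8$. First I would expand $\Delta(z)=\frac12(1\otimes1+1\otimes x+y\otimes1-y\otimes x)(z\otimes z)$ into $\Delta(z)=\frac12(z\otimes z+z\otimes xz+yz\otimes z-yz\otimes xz)$, and then apply $\Delta\otimes\mathrm{id}$, using that $\Delta$ is an algebra map with $\Delta(x)=x\otimes x$ and $\Delta(y)=y\otimes y$. This writes $\Delta^{(2)}(z)$ as an explicit sum of rank-one tensors whose entries all lie in $\{z,xz,yz,xyz\}$; since coassociativity forces the same answer from $\mathrm{id}\otimes\Delta$, the two computations cross-check each other. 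Applying $\mathrm{id}^{\otimes2}\otimes S$ and collecting the four possible middle factors $z_{(2)}\in\{z,xz,yz,xyz\}$ then yields the first displayed identity.

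For the antipode I would record that $S$ is an anti-algebra map fixing $x$, $y$ and $z$, so that $S(z)=z$, $S(xz)=zx=yz$, $S(yz)=zy=xz$ and $S(xyz)=xyz$, using $zx=yz$, $zy=xz$ and $xy=yx$. Moving the second tensor factor of $\Delta^{(2)}(z)$ to the front and leaving a slot between $z_{(1)}$ and $S(z_{(3)})$ produces the template $z_{(2)}\otimes z_{(1)}\,?\,S(z_{(3)})$ recorded in the second display; every subsequent formula is just this template with a fixed basis element inserted in the slot.

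The heart of the argument is then the substitution of each $g\in\{1,x,y,xy,z,xz,yz,xyz\}$ into the slot and the simplification of the second tensor factor $z_{(1)}\,g\,S(z_{(3)})$. The key structural observation is that in every term of $\Delta^{(2)}(z)$ each of $z_{(1)}$ and $z_{(3)}$ is a group-like element of $C_2\times C_2$ times a single $z$, so after writing $S(z_{(3)})=z\,g_3$ the product $z_{(1)}\,g\,S(z_{(3)})$ always contains the block $z\,g\,z$. Pushing group elements through $z$ via $zx=yz$, $zy=xz$ turns this block into $\sigma(g)\,z^2$ (for $g$ in the group part) or $\sigma(g_0)\,z^3$ (for $g=g_0z$ in the $z$-part), where $\sigma$ interchanges $x$ and $y$ and fixes $1,xy$. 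Finally $z^2=\frac12(1+x+y-xy)$ collapses these: for $g\in\{1,x,y,xy\}$ the result lands in $\mathbb{K}[C_2\times C_2]$, giving \eqref{eq:1}--\eqref{eq:xy}, while for $g\in\{z,xz,yz,xyz\}$ one factor $z$ survives through $z^3=z\cdot z^2=\frac12(z+yz+xz-xyz)$, giving \eqref{eq:z}--\eqref{eq:xyz}.

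I expect the only genuine obstacle to be bookkeeping rather than ideas: tracking the sixteen signed terms produced when expanding $\Delta^{(2)}(z)$, applying the sign-sensitive relation $z^2=\frac12(1+x+y-xy)$ correctly, and not confusing the two swaps $zx=yz$ and $zy=xz$. Applying the counit in any one tensor slot, which must return $z$, together with the coassociativity check above, give convenient internal tests against arithmetic slips.
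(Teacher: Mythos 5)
Your proposal is correct: the paper states this lemma without any proof (it is offered as a list of useful formulae obtained by direct computation), and your plan — expand $\Delta^{(2)}(z)$ from $\Delta(z)=\frac12(z\otimes z+z\otimes xz+yz\otimes z-yz\otimes xz)$, apply $\mathrm{id}^{\otimes2}\otimes S$ with $S(xz)=yz$, $S(yz)=xz$, pass to the template $z_{(2)}\otimes z_{(1)}\,?\,S(z_{(3)})$, and collapse each block $zgz$ via the swap $\sigma$ and $z^2=\frac12(1+x+y-xy)$ — is exactly the straightforward verification the paper implicitly relies on. Spot-checking \eqref{eq:1}, \eqref{eq:x} and \eqref{eq:z} against your scheme reproduces the stated right-hand sides (including the vanishing of the $xz$ and $yz$ middle-factor terms in \eqref{eq:z}), so the method carries through with only the bookkeeping you already anticipate.
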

\begin{definition}
Define $M\langle b,g\rangle\coloneqq\mathbb{K}\{v\boxtimes g|v\in V_1(b)\}$, where $b\in\{\pm 1, \pm \mi\}$ and $g\in\{1,x,y,xy\}$.
\end{definition}
\begin{lemma}
There are eight pairwise non-isomorphic one dimensional Yetter-Drinfel'd modules over $H_8$ as $M\langle b,g\rangle$ with 
$(b,g)\in\{(\pm1,1),(\pm1,xy),(\pm\mi,x),(\pm\mi,y)\}$. The actions and coactions are given by 
\begin{eqnarray}
x\cdot (v\boxtimes g)=b^2(v\boxtimes g),\quad 
y\cdot (v\boxtimes g)=b^2(v\boxtimes g), \quad
z\cdot (v\boxtimes g)=b(v\boxtimes g),\\
\rho(v\boxtimes g)=g\otimes (v\boxtimes g),\quad v\boxtimes g\in 
M\langle b,g\rangle, \quad v\in V_1(b).
\end{eqnarray}
\end{lemma}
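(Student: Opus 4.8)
The plan is to treat each $M\langle b,g\rangle$ as the one-dimensional subspace $\mathbb{K}(v\boxtimes g)$ of the Yetter-Drinfel'd module $V_1(b)\boxtimes H_8$ furnished by Radford's construction (Lemma \ref{constructionLRYD}), and to decide directly, via the structure maps \eqref{eq:action} and \eqref{eq:coaction}, for which pairs $(b,g)$ this subspace is a submodule. Since each $g\in\{1,x,y,xy\}$ is grouplike, $\Delta(g)=g\otimes g$, so \eqref{eq:coaction} gives at once $\rho(v\boxtimes g)=g\otimes(v\boxtimes g)$; thus $\mathbb{K}(v\boxtimes g)$ is automatically a $g$-homogeneous subcomodule and the asserted coaction holds. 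For the module structure, \eqref{eq:action} together with $\Delta^{(2)}(x)=x\otimes x\otimes x$ and the fact that the grouplikes form an abelian group of exponent two (so $xgS(x)=xgx=g$) yields $x\cdot(v\boxtimes g)=(x\cdot v)\boxtimes xgx=b^2(v\boxtimes g)$, and similarly $y\cdot(v\boxtimes g)=b^2(v\boxtimes g)$. Hence the space is closed under $x$ and $y$ with the stated eigenvalues, for every pair $(b,g)$.

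The only genuinely non-trivial point is the action of $z$, and here the work has already been packaged by the formulas \eqref{eq:1}, \eqref{eq:x}, \eqref{eq:y}, \eqref{eq:xy} of the preceding lemma. By \eqref{eq:action} one has $z\cdot(v\boxtimes g)=(z_{(2)}\cdot v)\boxtimes z_{(1)}gS(z_{(3)})$, which I would evaluate by inserting the relevant identity for $z_{(2)}\otimes z_{(1)}gS(z_{(3)})$ and then using the $V_1(b)$-eigenvalues $z\cdot v=bv$, $xz\cdot v=yz\cdot v=b^3v$ and $xyz\cdot v=bv$ (recall $b^4=1$). Expanding the right-hand factor in the grouplike basis $\{1,x,y,xy\}$ and collecting terms, all cross contributions cancel and one obtains $z\cdot(v\boxtimes g)=b(v\boxtimes g)$ exactly when $b^2=1$ with $g\in\{1,xy\}$, or $b^2=-1$ with $g\in\{x,y\}$; in each of the remaining eight of the sixteen pairs the surviving term is a nonzero multiple of $v\boxtimes g'$ for a grouplike $g'\neq g$, so $\mathbb{K}(v\boxtimes g)$ is not a submodule. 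This isolates precisely the eight pairs in the statement and verifies the displayed $z$-action. The symmetries $b\leftrightarrow -b$ and $x\leftrightarrow y$ cut the bookkeeping down to two representative computations.

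For completeness one notes that any one-dimensional object of ${}_{H_8}^{H_8}\mathcal{YD}$ has coaction given by a grouplike of $H_8$ and action given by a character of $H_8$, i.e.\ by some $V_1(b)$; the four grouplikes and four characters exhaust the sixteen spaces $M\langle b,g\rangle$, and the computation above selects the eight that actually carry a Yetter-Drinfel'd structure. Finally, pairwise non-isomorphism is immediate: a morphism between one-dimensional objects of ${}_{H_8}^{H_8}\mathcal{YD}$ is a scalar intertwining both the action and the coaction, the coaction recovers $g$ and the action of $z$ recovers $b$, so the pair $(b,g)$ is an isomorphism invariant; as the eight listed pairs are distinct, the modules are pairwise non-isomorphic. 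I expect no conceptual obstacle, the entire content being the coefficient bookkeeping in the $z$-computation, for which the earlier evaluation of $z_{(1)}gS(z_{(3)})$ does the heavy lifting.
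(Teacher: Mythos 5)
Your proposal is correct and takes essentially the same route as the paper: both realize $M\langle b,g\rangle$ inside Radford's construction and reduce everything to evaluating the $z$-action through the prepared formulas \eqref{eq:1}, \eqref{eq:x}, \eqref{eq:y}, \eqref{eq:xy}, which closes the one-dimensional subspace exactly for the eight listed pairs $(b,g)$. Your extra touches (the explicit grouplike/character checks for $x$, $y$ and the coaction, the observation that the coaction recovers $g$ and the $z$-action recovers $b$ for non-isomorphism, and the completeness remark) simply spell out what the paper compresses into "it's easy to see."
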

\begin{proof}
Let $v\in V_1(b)$, then 
\begin{eqnarray}
z\cdot (v\boxtimes 1)\xlongequal{\eqref{eq:1}}
\frac{bv}{4}\boxtimes [1+x+b^2(1-x)][1+y+b^2(1-y)],\label{eq:1dimXgrouplike1}\\
z\cdot (v\boxtimes xy)\xlongequal{\eqref{eq:xy}}
\frac{bv}{4}\boxtimes [1+x+b^2(x-1)][1+y+b^2(y-1)],\\
z\cdot (v\boxtimes x)\xlongequal{\eqref{eq:x}}
\frac{bv}{4}\boxtimes [1+x+b^2(1-x)][1+y+b^2(y-1)],\\
z\cdot (v\boxtimes y)\xlongequal{\eqref{eq:y}}
\frac{bv}{4}\boxtimes [1+x+b^2(x-1)][1+y+b^2(1-y)].\label{eq:1dimXgrouplike4}
\end{eqnarray}
so 
\begin{eqnarray}
z\cdot (v\boxtimes 1)=bv\boxtimes 1,\quad
z\cdot (v\boxtimes xy)=bv\boxtimes xy,\text{when} ~ b=\pm 1;\\
z\cdot (v\boxtimes x)=bv\boxtimes x,\quad
z\cdot (v\boxtimes y)=bv\boxtimes y,\text{when} ~ b=\pm \mi.
\end{eqnarray}
Now it's easy to see that $M\langle b,g\rangle$ defined in above is a one-dimensional Yetter-Drinfel'd modules by Radford's method  and the eight one-dimensional Yetter-Drinfel'd modules are pairwise non-isomorphic by observations on their actions and coactions.
\end{proof}
\begin{definition}
Let $(g_1, g_2)\in \{(1,y), (x,1),(xy,x),(y,xy)\}$ and denote three vector spaces as 
$$M\langle(1,xy)\rangle\coloneqq\mathbb{K}\{v\boxtimes 1, v\boxtimes xy|v\in V_1(\mi)\},$$ 
$$M\langle(x,y)\rangle\coloneqq\mathbb{K}\{v\boxtimes x, v\boxtimes y|v\in V_1(1)\},$$
 $$M\langle(g_1,g_2)\rangle\coloneqq\mathbb{K}\{(v_1+v_2)\boxtimes g_1, (v_1-v_2)\boxtimes g_2|v_1,v_2\in V_2\}.$$
\end{definition}
\begin{lemma}\label{YDM2dim1}
There are six pairwise non-isomorphic two-dimensional simple Yetter-Drinfel'd modules over $H_8$ as below, where the action and coaction are given by formulae (\ref{eq:action}) and (\ref{eq:coaction}).
\begin{enumerate}
\item $M\langle(1,xy)\rangle$, the actions of generators on $(v\boxtimes 1, v\boxtimes xy)$ are given by 
$$x\mapsto \begin{pmatrix}
    -1  &    0\\
    0  &  -1
\end{pmatrix}, \quad 
y\mapsto \begin{pmatrix}
    -1  &    0\\
    0  &  -1
\end{pmatrix}, \quad 
z\mapsto \begin{pmatrix}
    0 &    \mi\\
    \mi  &  0
\end{pmatrix}.$$
\item $M\langle(x,y)\rangle$, the actions of generators on $(v\boxtimes x, v\boxtimes y)$ are given by $$x\mapsto \begin{pmatrix}
    1  &    0\\
    0  &  1
\end{pmatrix}, \quad 
y\mapsto \begin{pmatrix}
    1  &    0\\
    0  &  1
\end{pmatrix}, \quad 
z\mapsto \begin{pmatrix}
    0 &    1\\
    1  &  0
\end{pmatrix}.$$
\item $M\langle(g_1,g_2)\rangle$, where $(g_1,g_2)\in \{(1,y), (x,1),(xy,x),(y,xy)\}$. the actions of generators on the row vector $((v_1+v_2)\boxtimes g_1, (v_1-v_2)\boxtimes g_2)$ are given by 
$$x\mapsto \begin{pmatrix}
    1  &    0\\
    0  &  -1
\end{pmatrix}, \quad 
y\mapsto \begin{pmatrix}
    -1  &    0\\
    0  &  1
\end{pmatrix}, \quad 
z\mapsto \begin{pmatrix}
    0 &    1\\
    1  &  0
\end{pmatrix}.$$
\end{enumerate}
\end{lemma}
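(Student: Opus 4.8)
The plan is to realize each of the six modules as a subspace of $V\boxtimes H_8$ for a suitable simple $H_8$-module $V$: take $V=V_1(\mi)$ for $M\langle(1,xy)\rangle$, $V=V_1(1)$ for $M\langle(x,y)\rangle$, and $V=V_2$ for the four modules $M\langle(g_1,g_2)\rangle$. By Lemma \ref{constructionLRYD}, together with the structure \eqref{eq:action} and \eqref{eq:coaction}, any $H_8$-stable subspace of $V\boxtimes H_8$ is automatically a Yetter-Drinfel'd submodule, since the coaction \eqref{eq:coaction} only acts on the $H_8$-tensorand. Thus the first task is to verify that each displayed two-dimensional subspace is closed under the algebra generators $x,y,z$ and to read off the action matrices.

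For $x$ and $y$ the verification is immediate. On a grouplike label $g\in\{1,x,y,xy\}$ one checks $xgx=ygy=g$, and since $S(x)=x$, $S(y)=y$, formula \eqref{eq:action} gives $h\cdot(\ell\boxtimes g)=(h\cdot\ell)\boxtimes g$ for $h\in\{x,y\}$; hence $x$ and $y$ act through their action on $V$ while fixing the label, producing the stated diagonal matrices. For $z$ I substitute into \eqref{eq:1}, \eqref{eq:x}, \eqref{eq:y}, \eqref{eq:xy}, using the values of $z,xz,yz,xyz$ on $V$. In the one-dimensional cases the products $(1\pm x)(1\pm y)$ collapse to a single grouplike, giving $z\cdot(v\boxtimes 1)=\mi\,(v\boxtimes xy)$, $z\cdot(v\boxtimes xy)=\mi\,(v\boxtimes 1)$ for $M\langle(1,xy)\rangle$, and $z\cdot(v\boxtimes x)=v\boxtimes y$, $z\cdot(v\boxtimes y)=v\boxtimes x$ for $M\langle(x,y)\rangle$, which are exactly the off-diagonal $z$-matrices. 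In the four $V_2$-cases one tracks how $z,xz,yz,xyz$ permute a basis $v_1,v_2$ of $V_2$; after the same collapse of the $(1\pm x)(1\pm y)$ factors one finds that $z$ interchanges $(v_1+v_2)\boxtimes g_1$ and $(v_1-v_2)\boxtimes g_2$, which simultaneously proves closure under $z$ and yields its anti-diagonal matrix. Finally, every spanning vector has the form $\ell\boxtimes g$ with $g$ grouplike, so \eqref{eq:coaction} gives $\rho(\ell\boxtimes g)=g\otimes(\ell\boxtimes g)$, showing that each module is homogeneous in the two grouplikes of its label.

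Simplicity and pairwise non-isomorphism then follow formally from this grading. Because $\{1,x,y,xy\}$ are grouplike in $H_8$, a comodule on which $\ell\boxtimes g_1$ and $\ell\boxtimes g_2$ are homogeneous of distinct degrees $g_1\neq g_2$ is graded, and every subcomodule is a sum of the two coordinate lines $\mathbb{K}(\ell\boxtimes g_1)$ and $\mathbb{K}(\ell\boxtimes g_2)$; indeed, applying the dual functionals $\delta_{g_1},\delta_{g_2}$ to $\rho$ separates the two lines. Since a Yetter-Drinfel'd submodule is in particular a subcomodule and the $z$-action interchanges these two lines, neither line is stable, so no proper nonzero submodule exists and each module is simple. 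For non-isomorphism, a morphism of Yetter-Drinfel'd modules is a comodule map and hence preserves the grouplike support. The six modules have supports $\{1,xy\}$, $\{x,y\}$, $\{1,y\}$, $\{1,x\}$, $\{x,xy\}$, $\{y,xy\}$, which are precisely the six distinct two-element subsets of $\{1,x,y,xy\}$; therefore no two of them are isomorphic.

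The heaviest, though entirely routine, part of the argument is the $z$-computation in the four $V_2$-cases: one must carefully bookkeep the permutation action of $z,xz,yz,xyz$ on $V_2$ and combine it with the expansions of the factors $(1\pm x)(1\pm y)$ appearing in \eqref{eq:1}--\eqref{eq:xy}. The decisive point is that the particular combinations $(v_1+v_2)\boxtimes g_1$ and $(v_1-v_2)\boxtimes g_2$ are exactly those for which the resulting sums telescope onto a single grouplike, which is what forces the two-dimensional subspace to be $z$-stable and hence a Yetter-Drinfel'd submodule.
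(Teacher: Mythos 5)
Your proposal is correct and takes essentially the same route as the paper's own proof: realize the modules inside $V\boxtimes H_8$ by Radford's construction, reduce everything to the $z$-action computations coming from \eqref{eq:1}--\eqref{eq:xy}, and deduce pairwise non-isomorphism from the comodule structure; your explicit grading argument for simplicity and the grouplike-support count only make precise what the paper records as immediate. One caveat: your blanket assertion that \emph{any} $H_8$-stable subspace of $V\boxtimes H_8$ is automatically a Yetter-Drinfel'd submodule is false in general --- for instance, with $v\in V_1(1)$ the line $\mathbb{K}\{v\boxtimes(1+xy)\}$ is stable under the action \eqref{eq:action} but is not a subcomodule under \eqref{eq:coaction} --- yet this does not damage your argument, because your subspaces are spanned by vectors $\ell\boxtimes g$ with $g$ grouplike and you verify $\rho(\ell\boxtimes g)=g\otimes(\ell\boxtimes g)$ separately, which is all that is actually needed.
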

\begin{proof}
For the coactions are easy to see, we can focus on their structures as  left $H_8$-modules. Part (1) and (2) of the lemma can be checked by formulae (\ref{eq:1dimXgrouplike1}) to (\ref{eq:1dimXgrouplike4}).
Let $v_1, v_2\in V_2$, then 
\begin{eqnarray}
z\cdot (v_1\boxtimes 1)\xlongequal{\eqref{eq:1}}\frac{1}{2}[v_1\boxtimes (x+y)+v_2\boxtimes (x-y)],\\
z\cdot (v_2\boxtimes 1)\xlongequal{\eqref{eq:1}}\frac{1}{2}[v_1\boxtimes (-x+y)+v_2\boxtimes (-x-y)],\\
z\cdot (v_1\boxtimes xy)\xlongequal{\eqref{eq:xy}}\frac{1}{2}[v_1\boxtimes (x+y)+v_2\boxtimes (y-x)],\\
z\cdot (v_2\boxtimes xy)\xlongequal{\eqref{eq:xy}}\frac{1}{2}[v_1\boxtimes (x-y)+v_2\boxtimes (-x-y)],\\
z\cdot (v_1\boxtimes y)\xlongequal{\eqref{eq:y}}\frac{1}{2}[v_1\boxtimes (1+xy)+v_2\boxtimes (1-xy)],\\
z\cdot (v_2\boxtimes y)\xlongequal{\eqref{eq:y}}\frac{1}{2}[v_1\boxtimes (-1+xy)+v_2\boxtimes (-1-xy)],\\
z\cdot (v_1\boxtimes x)\xlongequal{\eqref{eq:x}}\frac{1}{2}[v_1\boxtimes (1+xy)+v_2\boxtimes (-1+xy)],\\
z\cdot (v_2\boxtimes x)\xlongequal{\eqref{eq:x}}\frac{1}{2}[v_1\boxtimes (1-xy)+v_2\boxtimes (-1-xy)].
\end{eqnarray}
So we have 
$$z\cdot [(v_1+v_2)\boxtimes 1]=(v_1-v_2)\boxtimes y,\quad 
z\cdot [(v_1-v_2)\boxtimes y]=(v_1+v_2)\boxtimes 1,$$
$$z\cdot [(v_1+v_2)\boxtimes x]=(v_1-v_2)\boxtimes 1,\quad z\cdot [(v_1-v_2)\boxtimes 1]=(v_1+v_2)\boxtimes x,$$
$$z\cdot [(v_1+v_2)\boxtimes xy]=(v_1-v_2)\boxtimes x,\quad 
z\cdot [(v_1-v_2)\boxtimes x]=(v_1+v_2)\boxtimes xy,$$
$$z\cdot [(v_1+v_2)\boxtimes y]=(v_1-v_2)\boxtimes xy,\quad  
z\cdot [(v_1-v_2)\boxtimes xy]=(v_1+v_2)\boxtimes y.$$
Part (3) is immediate to check. The six two-dimensional Yetter-Drinfel'd modules are pairwise non-isomorphic since they are 
pairwise non-isomorphic as comodules.
\end{proof}

\begin{lemma}
\label{YDM2dim2}
Let $b_1,b_2\in \{\pm 1\} $ and $v\in V_1(b_2)$ and denote 
\begin{equation}
w^{b_1,b_2}_1\triangleq v\boxtimes (1+\mi b_1 y)z,\quad w^{b_1,b_2}_2\triangleq v\boxtimes x(1-\mi b_1 y)z.
\end{equation}
Then $W^{b_1,b_2}=\mathbb{K}w^{b_1,b_2}_1+\mathbb{K}w^{b_1,b_2}_2$ is  a family of 4 pairwise non-isomorphic two dimensional simple  Yetter-Drinfel'd modules over $H_8$ with the actions of generators on the row vector $(w^{b_1,b_2}_1, w^{b_1,b_2}_2)$ and coactions  given by 
$$x\mapsto \begin{pmatrix}
    0  &     -\mi b_1\\
  \mi b_1 &  0
\end{pmatrix}, \quad 
y\mapsto \begin{pmatrix}
   0  &     -\mi b_1\\
    \mi b_1 &  0
\end{pmatrix}, \quad 
z\mapsto \begin{pmatrix}
    \frac{(1-\mi b_1)b_2}{2} &    \frac{(1-\mi b_1)b_2}{2}\\
    \frac{-(1-\mi b_1)b_2}{2}  &  \frac{(1-\mi b_1)b_2}{2}
\end{pmatrix},$$
$$\rho\left(w^{b_1,b_2}_1\right)=\frac{(1+y)z}{2}\otimes w^{b_1,b_2}_1
+\frac{(1-y)z}{2}\otimes w^{b_1,b_2}_2,$$
$$\rho\left(w^{b_1,b_2}_2\right)=\frac{x(1+y)z}{2}\otimes w^{b_1,b_2}_2
+\frac{x(1-y)z}{2}\otimes w^{b_1,b_2}_1.$$
\end{lemma}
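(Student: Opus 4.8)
The plan is to realize $W^{b_1,b_2}$ as a sub-object of $V_1(b_2)\boxtimes H_8$, which is already a Yetter-Drinfel'd module under \eqref{eq:action} and \eqref{eq:coaction} by Radford's construction (Lemma \ref{constructionLRYD}). Consequently the Yetter-Drinfel'd compatibility is automatic, and it suffices to (i) check that $\mathbb{K}w_1+\mathbb{K}w_2$ is closed under the action of the algebra generators $x,y,z$ and under the coaction $\rho$, which simultaneously produces the asserted matrices and coaction formulae; (ii) verify $\dim W^{b_1,b_2}=2$; (iii) prove simplicity; and (iv) prove the four modules are pairwise non-isomorphic. Throughout I use that $b_2=\pm1$ forces $b_2^2=1$, so a generator acts on $v\in V_1(b_2)$ by $x\cdot v=y\cdot v=v$ and $z\cdot v=b_2v$ (whence also $xyz\cdot v=b_2v$), together with the identity $(\mi b_1)^{-1}=-\mi b_1$.

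For the action of $x$ (and identically $y$) I would feed $\Delta(x)=x\otimes x$, $S(x)=x$ into \eqref{eq:action} to get $x\cdot(v\boxtimes g)=(x\cdot v)\boxtimes xgx$, and then simplify the group parts $x(1+\mi b_1 y)zx$ and $x^2(1-\mi b_1 y)zx$ using $x^2=y^2=1$, $zx=yz$ and $zy=xz$; this collapses to $x\cdot w_1=\mi b_1 w_2$ and $x\cdot w_2=-\mi b_1 w_1$, matching $M_x=M_y$. For the action of $z$ I would expand $w_1=v\boxtimes z+\mi b_1(v\boxtimes yz)$ and $w_2=v\boxtimes xz-\mi b_1(v\boxtimes xyz)$, apply \eqref{eq:action} summand-by-summand, and substitute the precomputed formulae \eqref{eq:z}, \eqref{eq:yz} for $w_1$ and \eqref{eq:xz}, \eqref{eq:xyz} for $w_2$; after using $z\cdot v=xyz\cdot v=b_2v$ and collecting the coefficients of the basis vectors $z,xz,yz,xyz$, one recovers $z\cdot w_1=\tfrac{(1-\mi b_1)b_2}{2}(w_1-w_2)$ and the analogous expression for $w_2$, i.e. the matrix $M_z$. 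The coaction is cleanest: by \eqref{eq:coaction} it is governed by $\Delta$ of the group parts, and expanding $\Delta\big((1+\mi b_1 y)z\big)=\big(1+\mi b_1(y\otimes y)\big)\Delta(z)$ and regrouping the eight resulting terms according to their first tensor leg produces exactly $\tfrac{(1+y)z}{2}\otimes w_1+\tfrac{(1-y)z}{2}\otimes w_2$, and symmetrically for $w_2$.

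Two-dimensionality is immediate since the group parts $z+\mi b_1 yz$ and $xz-\mi b_1 xyz$ involve disjoint elements of the basis of $H_8$, so $w_1,w_2$ are linearly independent. For simplicity I first note that $W^{b_1,b_2}$ is actually decomposable both as a module and as a comodule, so the argument must exploit the interaction of the two structures: since $x=y$ act by the same matrix with the two distinct eigenvalues $\pm1$, the only one-dimensional submodules are the eigenlines $\mathbb{K}(w_1\pm\mi b_1 w_2)$; a direct substitution into the coaction formulae shows that $\rho(w_1\pm\mi b_1 w_2)$ does not lie in $H_8\otimes\mathbb{K}(w_1\pm\mi b_1 w_2)$, so neither eigenline is a subcomodule and $W^{b_1,b_2}$ has no proper nonzero Yetter-Drinfel'd submodule. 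Finally, all four modules are isomorphic as comodules (the coaction matrix is independent of $b_1,b_2$), so I distinguish them as modules: the eigenvalues of $z$ are $\{b_2,-\mi b_2\}$ when $b_1=1$ and $\{b_2,\mi b_2\}$ when $b_1=-1$, yielding four pairwise distinct multisets and hence four pairwise non-isomorphic objects.

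The hard part is not any single identity but the bookkeeping in the $z$-action: one must substitute the correct formula among \eqref{eq:z}--\eqref{eq:xyz} for each of the four monomials $z,xz,yz,xyz$ and then reassemble the result in the basis $\{w_1,w_2\}$, which is where sign and coefficient errors are most likely. Conceptually, the genuinely non-formal point is the simplicity statement: because the module is simultaneously diagonalizable (indeed decomposable) as a module and again as a comodule, simplicity hinges entirely on the fact that these two decompositions are transverse, and this transversality has to be verified by hand through the explicit coaction.
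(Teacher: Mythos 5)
Your proposal is correct, and its computational core — realizing $W^{b_1,b_2}$ inside $V_1(b_2)\boxtimes H_8$ via Radford's construction, checking closure under $x$, $y$, $z$ with the formulae \eqref{eq:z}--\eqref{eq:xyz}, and reading off the coaction from $\Delta$ of the group parts — is exactly the route the paper takes, which it compresses into ``straightforward by the definition'' together with the remark following the lemma (where the same decomposition $\mathbb{K}\{v\boxtimes z,\dots,v\boxtimes xyz\}\simeq W^{1,b}\oplus W^{-1,b}$ and the diagonalizing basis $p_1=w_1+\mi b_1w_2\in V_1(b_2)$, $p_2=w_1-\mi b_1w_2\in V_1(-\mi b_1b_2)$ are recorded). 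You genuinely diverge in the two parts the paper treats lightly, and in both cases your version is an improvement. For pairwise non-isomorphism the paper argues in two steps: when $b_2\neq b_2'$ it invokes the braidings computed only later, in Proposition \ref{NAlgdim1} (a forward reference), and only when $b_1\neq b_1'$ does it use the module structure. Your single argument — the multiset of $z$-eigenvalues $\{b_2,-\mi b_1b_2\}$ takes the four pairwise distinct values $\{1,-\mi\}$, $\{1,\mi\}$, $\{-1,\mi\}$, $\{-1,-\mi\}$ as $(b_1,b_2)$ ranges over $\{\pm1\}^2$, and a Yetter-Drinfel'd isomorphism is in particular a module isomorphism — disposes of all six pairs at once and is self-contained. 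Second, the paper never spells out simplicity. Your transversality argument supplies it correctly: any one-dimensional submodule must be one of the two $x$-eigenlines $\mathbb{K}(w_1\pm\mi b_1w_2)=\mathbb{K}p_1$, $\mathbb{K}p_2$, and neither is a subcomodule, since $\rho(p_1)$ has the nonzero component $\left[f_{10}+\mi b_1f_{01}\right]z\otimes p_2$ (and symmetrically for $p_2$), as one sees from the coaction formulae of the lemma or of the subsequent remark. Your closing observation is also the right conceptual summary: as a module and as a comodule separately each $W^{b_1,b_2}$ is decomposable, and simplicity holds precisely because the module eigenlines are not coideals, i.e.\ the two decompositions are transverse.
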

\begin{proof}
 It's straightforward by the definition of Yetter-drinfel'd module. When $b_2\neq b_2^\prime$,   $W^{b_1,b_2}\nsimeq W^{b_1,b_2^\prime}$ since we  will see that their braidings  are different in Proposition \ref{NAlgdim1}. As explained in the following remark, $W^{b_1,b_2}$ has another basis $\{p_1,p_2\}$ with $p_1\in V_1(b_2)$ and $p_2\in V_1(-b_1b_2\mi)$. So $W^{b_1,b_2}\nsimeq W^{b_1^\prime,b_2}$ if $b_1\neq b_1^\prime$.
 \end{proof}
 \begin{remark}
 \begin{enumerate}
 \item
Let
$M=\mathbb{K}\{v\boxtimes z, v\boxtimes xz, v\boxtimes yz, v\boxtimes xyz| v\in V_1(b)\}$, $b\in\{\pm 1\}$.
$z$ acts on elements of $M$ as
\begin{align*}
z\cdot (v\boxtimes z)&\xlongequal{\eqref{eq:z}}
\frac{bv}{2}\boxtimes (1-x+y+xy)z,
&z\cdot (v\boxtimes xz)\xlongequal{\eqref{eq:xz}}
\frac{bv}{2}\boxtimes (1+x+y-xy)z,\\
z\cdot (v\boxtimes yz)&\xlongequal{\eqref{eq:yz}}
\frac{bv}{2}\boxtimes (-1+x+y+xy)z,
&z\cdot (v\boxtimes xyz)\xlongequal{\eqref{eq:xyz}}
\frac{bv}{2}\boxtimes (1+x-y+xy)z.
\end{align*}
Then $M\simeq W^{1,b}\oplus W^{-1,b}$ as Yetter-Drinfel'd modules over $H_8$. 
%In the following, we
%decompose $M$ into direct sum of simple left $H_8$-modules to see that the four simple Yetter-Drinfel'd modules in $\{W^{b_1,b_2}|b_1,b_2=\pm 1\}$ are pairwise non-isomorphic. 
%$$w_1\coloneqq v\boxtimes (-1+\mi x-\mi y+xy)z\in V_1(\mi b),\quad
%w_2\coloneqq v\boxtimes (-1-\mi x+\mi y+xy)z\in V_1(-\mi b),$$
%$$w_3\coloneqq v\boxtimes (1+xy)z\in V_1(b),\quad
%w_4\coloneqq v\boxtimes (x+y)z\in V_1(b).$$
%\begin{eqnarray*}
%v\boxtimes z=\frac{1}{4}(-w_1-w_2+2w_3),\quad
%v\boxtimes xz=\frac{\mi}{4}(-w_1+w_2-2\mi w_4),\\
%v\boxtimes yz=\frac{\mi}{4}(w_1-w_2-2\mi w_4),\quad
%v\boxtimes xyz=\frac{1}{4}(w_1+w_2+2w_3).
%\end{eqnarray*}
%\begin{eqnarray*}
%v\boxtimes (1+\mi y)z=-\frac{1}{2}\left(w_1-w_3-\mi w_4\right)\in V_1(\mi b)\oplus V_1(b),\\
%v\boxtimes x(1-\mi y)z=-\frac{\mi}{2}\left(w_1+w_3+\mi w_4\right)\in V_1(\mi b)\oplus V_1(b),\\
%v\boxtimes (1-\mi y)z=-\frac{1}{2}\left(w_2-w_3+\mi w_4\right)\in V_1(-\mi b)\oplus V_1(b),\\
%v\boxtimes x(1+\mi y)z=\frac{\mi}{2}\left(w_2+w_3-\mi w_4\right)\in V_1(-\mi b)\oplus V_1(b).
%\end{eqnarray*}
\item Let $f_{jk}\triangleq\frac{1}{4}[1+(-1)^jx][1+(-1)^ky]$, $j,k=0,1$. Denote 
$$p_1=w_1^{b_1, b_2}+\mi b_1 w_2^{b_1,b_2},\quad 
p_2=w_1^{b_1, b_2}-\mi b_1 w_2^{b_1,b_2},$$
then $W^{b_1,b_2}=\mathbb{K}p_1+\mathbb{K}p_2$  with the actions of generators on the row vector $(p_1, p_2)$ and coactions  given by 
$$x\mapsto \begin{pmatrix}
    1  &     0\\
  0 &  -1
\end{pmatrix}, \quad 
y\mapsto \begin{pmatrix}
    1  &     0\\
  0 &  -1
\end{pmatrix}, \quad 
z\mapsto \begin{pmatrix}
    b_2 &    0\\
    0  &  -\mi b_1b_2
\end{pmatrix},$$
$$\rho(p_1)=\left[f_{00}-
\mi b_1f_{11}\right]z\otimes p_1+\left[f_{10}+
\mi b_1f_{01}\right]z\otimes p_2,$$
$$\rho(p_2)=\left[f_{00}+
\mi b_1f_{11}\right]z\otimes p_2+
\left[f_{10}-
\mi b_1f_{01}\right]z\otimes p_1.$$
\end{enumerate}
\end{remark}

According to \cite[Remark 2.14]{MR1357764}, $H_8$ is presented by generators $x,y,w$, where the expressions containing $z$ are replaced by
\begin{eqnarray}
w = \left(f_{00}+\sqrt{\mi} f_{10} +\frac{1}{\sqrt{\mi}}f_{01}+\mi f_{11}\right)z, \quad
 w^2=1, \\
 wx = yw,\quad
S(w)=\left(\frac{1+\mi}{2}x+\frac{1-\mi}{2}y\right)w,\\
\Delta(w)=\left(\frac{1}{2}(1+xy)\otimes 1+\frac{1+\mi}{4}(1-xy)\otimes x+\frac{1-\mi}{4}(1-xy)\otimes y\right)(w\otimes w).
\end{eqnarray}
%Let $M=\mathbb{K}\{v_j\boxtimes z, v_j\boxtimes xz, v_j\boxtimes yz, v_j\boxtimes xyz|v_j\in V_2, j=1,2\}$. We are going to decompose $M$ into direct sum of simple Yetter-Drinfel'd modules. The following is a list of some related calculations.
%\begin{definition} Denote $M\langle 4|V_2\rangle\coloneqq\mathbb{C}\{v_i\boxtimes (1+y)z, v_i\boxtimes x(1-y)z|v_i\in V_2, i=1,2\}$ and 
%$M^\prime\langle 4|V_2\rangle\coloneqq\mathbb{C}\{v_i\boxtimes (1-y)z, v_i\boxtimes x(1+y)z|v_i\in V_2, i=1,2\}$.
%\end{definition}
%\begin{lemma}
%$M\langle 4|V_2\rangle\simeq M^\prime\langle 4|V_2\rangle$ as simple left-left Yetter-Drinfel'd modules of $H_8$.
%\end{lemma}
%$$w_{1i}^{\pm}\coloneqq v_i\boxtimes (1\pm y)z,\quad
%w_{2i}^{\pm}\coloneqq v_i\boxtimes x(1\pm y)z,\quad i=1,2.$$
%\begin{eqnarray*}
%\left(\begin{array}{c}1 \\x \\y \\xy\end{array}\right)
%=\left(\begin{array}{cccc}1 & 1 & 1 & 1 \\1 & 1 & -1 & -1 \\1 & -1 & 1 & -1 \\1 & -1 & -1 & 1\end{array}\right)
%\left(\begin{array}{c}f_{00} \\f_{01} \\f_{10} \\f_{11}\end{array}\right), \quad 
%\left(\begin{array}{c}f_{00} \\f_{01} \\f_{10} \\f_{11}\end{array}\right)
%=\frac{1}{4}\left(\begin{array}{cccc}1 & 1 & 1 & 1 \\1 & 1 & -1 & -1 \\1 & -1 & 1 & -1 \\1 & -1 & -1 & 1\end{array}\right)
%\left(\begin{array}{c}1 \\x \\y \\xy\end{array}\right)
%\end{eqnarray*}
%\begin{eqnarray*}
%z\cdot (v_1\boxtimes z)=\frac{1}{2}v_1\boxtimes (1+x+y-xy)z,\quad
%z\cdot (v_1\boxtimes xz)=\frac{1}{2}v_1\boxtimes (1-x+y+xy)z,\\
%z\cdot (v_1\boxtimes yz)=\frac{1}{2}v_1\boxtimes (1+x-y+xy)z,\quad
%z\cdot (v_1\boxtimes xyz)=\frac{1}{2}v_1\boxtimes (-1+x+y+xy)z.
%\end{eqnarray*}
Let $a+1=\pm \sqrt{2}$, we define
\begin{eqnarray*}
w_1^{(1)}\triangleq(v_1+\mi av_2)\boxtimes \frac{\mi}{2}\left[(x+y)+\sqrt{\mi}(x-y)\right]w
+(av_1-\mi v_2)\boxtimes \frac{1}{2}\left[(x+y)-\sqrt{\mi}(x-y)\right]w,\\
w_2^{(1)}\triangleq(v_1+\mi av_2)\boxtimes \frac{\mi}{2}\left[(1+xy)+\sqrt{\mi}(1-xy)\right]w
-(av_1-\mi v_2)\boxtimes \frac{1}{2}\left[(1+xy)-\sqrt{\mi}(1-xy)\right]w,\\
w_1^{(2)}\triangleq(v_1-\mi av_2)\boxtimes \frac{\mi}{2}\left[(x+y)+\sqrt{\mi}(x-y)\right]w
+(av_1+\mi v_2)\boxtimes \frac{1}{2}\left[(x+y)-\sqrt{\mi}(x-y)\right]w,\\
w_2^{(2)}\triangleq(v_1-\mi av_2)\boxtimes \frac{\mi}{2}\left[(1+xy)+\sqrt{\mi}(1-xy)\right]w
-(av_1+\mi v_2)\boxtimes \frac{1}{2}\left[(1+xy)-\sqrt{\mi}(1-xy)\right]w.
\end{eqnarray*}
\begin{lemma}
\label{YDM2dim3}
Let $a+1=\pm \sqrt{2}$, there are  4 pairwise non-isomorphic 
simple Yetter-Drinfel'd modules $W_1^a$ and $W_2^a$ over $H_8$ as following
\begin{enumerate}
\item Let $W_1^a=\mathbb{K}w_1^{(1)}\oplus \mathbb{K}w_1^{(1)}$,  then $W_1^a$ is a two dimensional simple  Yetter-Drinfel'd module over $H_8$ with actions  given by 
\[ \left\{
\begin{array}{rl}
&x\cdot w_1^{(1)}=-w_1^{(1)}  \\
&y\cdot w_1^{(1)}=w_1^{(1)} \\
&z\cdot w_1^{(1)}=\frac{1}{2}(1-\mi)(a+1)w_2^{(1)}\\
&w\cdot w_1^{(1)}=\frac{1}{2\sqrt{\mi}}(1-\mi)(a+1)w_2^{(1)} 
\end{array}
\right.\quad
\left\{
\begin{array}{rl}
&x\cdot w_2^{(1)}=w_2^{(1)}  \\
&y\cdot w_2^{(1)}=-w_2^{(1)} \\
&z\cdot w_2^{(1)}=\frac{1}{2}(1+\mi)(a+1)w_1^{(1)}\\
&w\cdot w_2^{(1)}=\frac{\sqrt{\mi}}{2}(1+\mi)(a+1)w_1^{(1)}
\end{array}
\right.
\]
and coactions given by
\begin{align*}
\rho\left(w_1^{(1)}\right)&=\frac{1}{2}(x+y)w\otimes w_1^{(1)}+\frac{\sqrt{\mi}}{2} (x-y)w
\otimes w_2^{(1)},\\
\rho\left(w_2^{(1)}\right)&=\frac{1}{2}(1+xy)w\otimes w_2^{(1)}+\frac{\sqrt{\mi}}{2} (1-xy)w
\otimes w_1^{(1)}.
\end{align*}
\item Let $W_2^a=\mathbb{K}w_1^{(2)}\oplus \mathbb{K}w_1^{(2)}$, then $W_2^a$ is a two dimensional simple  Yetter-Drinfel'd module over $H_8$ with actions  given by 
\[ \left\{
\begin{array}{rl}
&x\cdot w_1^{(2)}=w_1^{(2)}  \\
&y\cdot w_1^{(2)}=-w_1^{(2)} \\
&z\cdot w_1^{(2)}=\frac{1}{2}(1-\mi)(a+1)w_2^{(2)}\\
&w\cdot w_1^{(2)}=\frac{\sqrt{\mi}}{2}(1-\mi)(a+1)w_2^{(2)}
\end{array}
\right.\quad 
\left\{
\begin{array}{rl}
&x\cdot w_2^{(2)}=-w_2^{(2)}  \\
&y\cdot w_2^{(2)}=w_2^{(2)} \\
&z\cdot w_2^{(2)}=\frac{1}{2}(1+\mi)(a+1)w_1^{(2)}\\
&w\cdot w_2^{(2)}=\frac{1}{2\sqrt{\mi}}(1+\mi)(a+1)w_1^{(2)}
\end{array}
\right.
\]
and coactions given by 
\begin{align*}
\rho\left(w_1^{(2)}\right)&=\frac{1}{2}(x+y)w\otimes w_1^{(2)}+\frac{\sqrt{\mi}}{2} (x-y)w
\otimes w_2^{(2)} ,\\
\rho\left(w_2^{(2)}\right)&=\frac{1}{2}(1+xy)w\otimes w_2^{(2)}+\frac{\sqrt{\mi}}{2} (1-xy)w
\otimes w_1^{(2)}.
\end{align*}
\end{enumerate}
\end{lemma}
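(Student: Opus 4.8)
The plan is to read Lemma~\ref{YDM2dim3} as an explicit verification carried out inside Radford's construction (Lemma~\ref{constructionLRYD}). Since $V_2\boxtimes H_8$ is automatically an object of ${}_{H_8}^{H_8}\mathcal{YD}$ under \eqref{eq:action} and \eqref{eq:coaction}, it is enough to check that the prescribed vectors $w_1^{(1)},w_2^{(1)}$ (respectively $w_1^{(2)},w_2^{(2)}$) span a subspace that is simultaneously stable under the action and the coaction, then to read off the displayed matrices and coaction formulae, and finally to settle simplicity and pairwise non-isomorphism. The organizing observation is that $V_2\boxtimes\mathbb{K}\{z,xz,yz,xyz\}$ is an $8$-dimensional Yetter--Drinfel'd submodule of $V_2\boxtimes H_8$, exactly as for the one-dimensional modules $V_1(b)$ treated in the remark following Lemma~\ref{YDM2dim2}, and the four modules $W_1^a,W_2^a$ with $a+1=\pm\sqrt{2}$ are precisely its simple summands. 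The coefficients $\sqrt{\mi}$ and $a$ entering the definition of the $w_i^{(j)}$ are chosen to block-diagonalize the $z$-action on this $8$-dimensional space, and the constraint $a+1=\pm\sqrt{2}$, i.e.\ $(a+1)^2=2$ or equivalently $a^2+2a-1=0$, is exactly the eigenvalue condition that renders each two-dimensional span invariant.

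Concretely, I would first expand the grouplike prefactors so that each second tensor factor of $w_i^{(j)}$ lies in $\mathrm{span}\{z,xz,yz,xyz\}$; the $z$-action can then be computed term by term directly from the already established identities \eqref{eq:z}--\eqref{eq:xyz}, using $zx=yz$ and $zy=xz$ together with the matrix action of $z$ on $V_2$. The generators $x,y$ are the benign part: acting through conjugation on the grouplike $\boxtimes$-factor and through the swap $x\mapsto\left(\begin{smallmatrix}0&1\\1&0\end{smallmatrix}\right)$ on $V_2$, each $w_i^{(j)}$ turns out to be a joint eigenvector, which yields the diagonal matrices with entries $\pm1$ displayed in the statement. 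The coactions are obtained from \eqref{eq:coaction} and $\Delta(z)$ (equivalently $\Delta(w)$), collecting terms into the stated form $\tfrac12(x+y)w\otimes(-)+\tfrac{\sqrt{\mi}}{2}(x-y)w\otimes(-)$ and its companion. Finally the $w$-action entries follow from the $z$-action by $w=\bigl(f_{00}+\sqrt{\mi}\,f_{10}+\tfrac{1}{\sqrt{\mi}}f_{01}+\mi f_{11}\bigr)z$, which is exactly what produces the factors $\tfrac{1}{\sqrt{\mi}}$ versus $\sqrt{\mi}$ separating $W_1^a$ from $W_2^a$.

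Simplicity is then immediate: each module is two-dimensional and $z$ acts with a nonzero off-diagonal entry that interchanges the two one-dimensional $x$-eigenspaces, so no coordinate line is invariant. For pairwise non-isomorphism I would exploit the rigidity of the comodule structure: in every one of the four modules the coaction attaches to $w_1$ the leading coefficient $\tfrac12(x+y)w$ and to $w_2$ the coefficient $\tfrac12(1+xy)w$, and these are linearly independent in $H_8$; hence any isomorphism must be diagonal, $w_1\mapsto\alpha w_1'$ and $w_2\mapsto\beta w_2'$, and matching the off-diagonal coaction terms forces $\alpha=\beta$. If two of the modules carry different $x$-eigenvalues on $w_1$ (this separates $W_1^a$ from $W_2^a$), preserving the $x$-action then forces $\alpha=0$; if instead they share all module eigenvalues but differ in the sign of $a+1$ (this separates the two values of $a$ for a fixed subscript), preserving the $z$-action forces $\beta=-\alpha$, again collapsing $\alpha=\beta=0$. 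Thus all four modules are pairwise non-isomorphic; alternatively, one may separate them by comparing their self-braidings as in the discussion around Proposition~\ref{NAlgdim1}.

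I expect the $z$-action computation to be the genuine obstacle. Because $z$ (and likewise $w$) has an entangled, non-grouplike coproduct, the term $h_{(1)}gS(h_{(3)})$ in \eqref{eq:action} expands into many summands, and it is only after imposing $(a+1)^2=2$ that they collapse into the clean off-diagonal $2\times2$ matrices; bookkeeping of the square roots $\sqrt{\mi}$ and of the signs coming from $zx=yz$ and $zy=xz$ is where the computation is most error-prone, and the $w$-presentation with its explicit $\Delta(w)$ is the most economical device for keeping it under control.
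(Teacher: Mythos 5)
Your proposal is correct and follows essentially the same route as the paper: both realize $W_1^a,W_2^a$ as the simple summands of the eight-dimensional Yetter--Drinfel'd submodule $V_2\boxtimes\mathbb{K}\{z,xz,yz,xyz\}$ arising from Radford's construction, verify the action and coaction formulae directly, and separate the four modules by the incompatibility of the module and comodule structures. Your rigidity argument for non-isomorphism (comodule structure forces any isomorphism to be diagonal with equal scalars, after which the $x$- or $z$-action yields a contradiction) is in fact a more explicit rendering of the paper's terse remark that the modules are isomorphic as modules or as comodules separately under a suitable basis, but not as both simultaneously.
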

\begin{proof}
It's straightforward to check by the definition of Yetter-drinfel'd module. Actually 
$M\simeq \bigoplus_{a+1=\pm \sqrt{2}}\left(W^a_1 \oplus W^a_2\right)$ as Yetter-Drinfel'd modules over $H_8$, where
$M=\mathbb{K}\{v_j\boxtimes z, v_j\boxtimes xz, v_j\boxtimes yz, v_j\boxtimes xyz|v_j\in V_2, j=1,2\}$. 

Since $\sqrt{\mi}=\cos\frac{\pi}{4}+\mi \sin\frac{\pi}{4}$, 
$\frac{1}{2}\sqrt{2}\sqrt{\mi}(1-\mi)=1$. Denote $a+1=b \sqrt{2}$, $b=\pm 1$, $p_1^{(1)}=\sqrt{\mi}w_1^{(1)}+w_2^{(1)}$, $p_2^{(1)}=-\sqrt{\mi}w_1^{(1)}+w_2^{(1)}$, then $W^a_1=\mathbb{K}p_1^{(1)}+\mathbb{K}p_2^{(1)}$ with actions on the row vector  
$\left(p_1^{(1)}, p_2^{(1)}\right)$ given by 
$$x\mapsto \begin{pmatrix}
    0  &    1\\
    1  &  0
\end{pmatrix}, \quad 
y\mapsto \begin{pmatrix}
    0  &    -1\\
    -1  &  0
\end{pmatrix}, \quad 
z\mapsto \begin{pmatrix}
    b &    0\\
    0  &  -b
\end{pmatrix}.$$
Let $p_1^{(2)}=w_1^{(2)}+\frac{1}{\sqrt{\mi}}w_2^{(2)}$, $p_2^{(2)}=w_1^{(2)}-\frac{1}{\sqrt{\mi}}w_2^{(2)}$, then $W^a_2=\mathbb{K}p_1^{(2)}+\mathbb{K}p_2^{(2)}$ with actions on the row vector  
$\left(p_1^{(2)}, p_2^{(2)}\right)$ given by 
$$x\mapsto \begin{pmatrix}
    0  &    1\\
    1  &  0
\end{pmatrix}, \quad 
y\mapsto \begin{pmatrix}
    0  &    -1\\
    -1  &  0
\end{pmatrix}, \quad 
z\mapsto \begin{pmatrix}
    b &    0\\
    0  &  -b
\end{pmatrix}.$$
Now we can observe that $W_1^{-1+\sqrt{2}}$ is isomorphic to  $W_1^{-1-\sqrt{2}}$ as modules (or comodules) under suitably chosen base, but they are not isomorphic as modules and comodules at the same time. So 
$W_1^{-1+\sqrt{2}}\nsimeq W_1^{-1-\sqrt{2}}$ as Yetter-Drinfel'd modules. For the same reason, we have $W_2^{-1+\sqrt{2}}\nsimeq W_2^{-1-\sqrt{2}}$ and $W_1^a\nsimeq W_2^a$.
\end{proof}

Obviously, any  module in Lemma \ref{YDM2dim1}
is not isomorphic to any one of  modules in Lemma \ref{YDM2dim2} and \ref{YDM2dim3} as comodules. As $H_8$-modules, $W^{b_1,b_2}\simeq V_1(b_2)\oplus V_1(-b_1b_2\mi)$, and $W_1^a\simeq W_2^a\simeq V_2$. So Yetter-drinfel'd modules in Lemma \ref{YDM2dim1}, \ref{YDM2dim2} and \ref{YDM2dim3} are pairwise non-isomorphic. Keeping in mind that $H_8$ is semisimple, now we are arriving at
\begin{theorem} All the simple  Yetter-Drinfel'd modules over $H_8$ are classified by
\begin{itemize}
\item 8 pairwise non-isomorphic simple Yetter-drinfel'd modules of one-dimension:
$$M\langle b,g\rangle,\quad (b,g)\in\left\{(\pm 1,1),(\pm 1,xy), (\pm \mi,x), (\pm \mi,y)\right\}.$$
\item 14 pairwise non-isomorphic simple Yetter-drinfel'd modules of two-dimension: 
$$M\langle(1,xy)\rangle, ~M\langle(x,y)\rangle, 
M\langle(g_1,g_2)\rangle,
 W^{b_1,b_2}, W_1^a, W_2^a, $$
where $(g_1,g_2)\in \{(1,y), (x,1),(xy,x),(y,xy)\}, b_1, b_2\in\{\pm 1\}, a+1=\pm\sqrt{2}.$
%  $$\text{where}~(g_1,g_2)\in \{(1,y), (x,1),(xy,x),(y,xy)\}, b_1, b_2\in\{\pm 1\}, a+1=\pm\sqrt{2}.$$
\end{itemize}
\end{theorem}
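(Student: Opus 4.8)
The plan is to establish \emph{completeness} by the same tool already used to produce the modules in Lemmas \ref{YDM2dim1}--\ref{YDM2dim3}, namely Radford's method. By Lemma \ref{constructionLRYD}(3) every simple object $M$ of ${}_{H_8}^{H_8}\mathcal{YD}$ embeds as a Yetter-Drinfel'd submodule of $L\boxtimes H_8$ for some simple left $H_8$-module $L$; since $H_8$ is semisimple, $L$ may be taken to be any simple constituent of $M$ as an $H_8$-module. Hence it suffices to decompose $L\boxtimes H_8$, with the structure \eqref{eq:action}--\eqref{eq:coaction}, into simple summands for each $L$ in the list $V_1(1),V_1(-1),V_1(\mi),V_1(-\mi),V_2$ of simple $H_8$-modules, and to collect the isomorphism classes that arise. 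As $H_8$ is semisimple and cosemisimple, the category is semisimple, so each $L\boxtimes H_8$ is a finite direct sum of simples and no extension problems occur.

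First I would split every $L\boxtimes H_8$ using the coalgebra decomposition of $H_8$: the grouplikes $\{1,x,y,xy\}$ span four one-dimensional simple subcoalgebras, while $\{z,xz,yz,xyz\}$ spans a single $2\times 2$ matrix subcoalgebra. By the coaction \eqref{eq:coaction} this yields a Yetter-Drinfel'd splitting $L\boxtimes H_8=\bigl(L\boxtimes\mathbb{K}\{1,x,y,xy\}\bigr)\oplus\bigl(L\boxtimes\mathbb{K}\{z,xz,yz,xyz\}\bigr)$. On the grouplike part the coaction is diagonal, so one only diagonalises the $z$-action through the formulae \eqref{eq:1}--\eqref{eq:xy}; this produces the eight one-dimensional modules $M\langle b,g\rangle$ and, where the $z$-action mixes two grouplike lines, the two-dimensional modules $M\langle(1,xy)\rangle$, $M\langle(x,y)\rangle$ (from $L=V_1(b)$) and the four $M\langle(g_1,g_2)\rangle$ (from $L=V_2$) of Lemma \ref{YDM2dim1}. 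On the matrix part one applies \eqref{eq:z}--\eqref{eq:xyz}: for $L=V_1(\pm1)$ this is the computation behind Lemma \ref{YDM2dim2}, giving the $W^{b_1,b_2}$, and for $L=V_2$ that behind Lemma \ref{YDM2dim3}, giving the $W_1^a$ and $W_2^a$; the matrix parts for $L=V_1(\pm\mi)$ only reproduce modules $W^{b_1,b_2}$ already found.

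Next I would record pairwise non-isomorphism. The modules of Lemma \ref{YDM2dim1} are distinguished from those of Lemmas \ref{YDM2dim2}--\ref{YDM2dim3} already as $H_8$-comodules, since their coactions are supported on different subcoalgebras; within the remaining families the underlying $H_8$-module type, namely $W^{b_1,b_2}\simeq V_1(b_2)\oplus V_1(-b_1b_2\mi)$ and $W_1^a\simeq W_2^a\simeq V_2$, together with the comodule structure separates the classes, and the last coincidences among the $W^{b_1,b_2}$ are ruled out by their braidings as in Proposition \ref{NAlgdim1}. This delivers $8$ one-dimensional and $14$ two-dimensional pairwise non-isomorphic simple modules.

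The main obstacle is completeness: one must be certain the explicit decompositions exhaust \emph{all} simple objects and that nothing is overlooked when the same module is reached from several $L\boxtimes H_8$. I would settle this with one dimension count. Since $H_8$ is semisimple and cosemisimple, the Drinfel'd double is a semisimple algebra of dimension $(\dim H_8)^2=64$, and ${}_{H_8}^{H_8}\mathcal{YD}\simeq{}_{\mathcal{D}(H_8^{cop})}\mathcal{M}$; by Artin--Wedderburn the squares of the dimensions of the simple objects sum to $64$. The modules found account for $8\cdot 1^2+14\cdot 2^2=8+56=64$, so there can be no further simple Yetter-Drinfel'd module, which closes the classification.
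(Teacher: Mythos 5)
Your argument is correct, and its constructive half coincides with the paper's: both obtain the list via Radford's embedding of a simple $M$ into $L\boxtimes H_8$ for a simple $H_8$-module $L$ (Lemma \ref{constructionLRYD} adapted to left--left structures \eqref{eq:action}--\eqref{eq:coaction}), decompose along the coalgebra splitting of $H_8$ into the four grouplike lines and the $2\times 2$ matrix block (this is exactly the content of Lemmas \ref{YDM2dim1}, \ref{YDM2dim2}, \ref{YDM2dim3} and the one-dimensional lemma), and separate isomorphism classes by comodule support, underlying $H_8$-module type, and, among the $W^{b_1,b_2}$, the braidings of Proposition \ref{NAlgdim1}. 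Where you genuinely depart from the paper is the completeness step. The paper's completeness is implicit in having decomposed every $L\boxtimes H_8$ together with semisimplicity; notably it never writes out the matrix-block summand $\mathbb{K}\{v\boxtimes z, v\boxtimes xz, v\boxtimes yz, v\boxtimes xyz\}$ for $v\in V_1(\pm\mi)$, a case your proposal likewise only asserts (``only reproduce modules $W^{b_1,b_2}$ already found''). Your closing dimension count renders that assertion harmless rather than leaving it as a gap: since $H_8$ is semisimple and, in characteristic zero, cosemisimple (Larson--Radford), the double $\mathcal{D}(H_8^{cop})$ is a semisimple algebra of dimension $64$, so under the equivalence ${}_{H_8}^{H_8}\mathcal{YD}\simeq {}_{\mathcal{D}(H_8^{cop})}\mathcal{M}$ --- which the paper cites but deliberately declines to use --- Artin--Wedderburn over the algebraically closed field $\mathbb{K}$ forces $\sum_i(\dim S_i)^2=64$, and $8\cdot 1^2+14\cdot 2^2=64$ shows your $22$ pairwise non-isomorphic simples exhaust the classification. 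This is a cleaner endgame: the paper stays entirely inside Radford's elementary framework but burdens the reader with finishing the last explicit decompositions, while your count buys completeness outright at the price of invoking semisimplicity of the double. Two small remarks: logically you should run the Artin--Wedderberg count \emph{before} (or instead of) the claim about $V_1(\pm\mi)\boxtimes(\text{matrix block})$, since without completeness in hand that claim is unproved; and if you do wish to keep it, it has a one-line proof --- $xy$ acts as the identity on $v\boxtimes\{z,xz,yz,xyz\}$ for any $v\in V_1(b)$, whereas $xy$ acts as $-\mathrm{id}$ on $V_2$, so no constituent of type $W_1^a$ or $W_2^a$ can occur and the summands must be $W^{b_1,b_2}$'s.
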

\begin{remark}
%A basic question in understanding the Drinfel'd double $\mathcal{D}(H)$ is how to construct
%and classify all the simple (left) $\mathcal{D}(H)$-modules when $\mathcal{D}(H)$ is semisimple. It is wellknown
%that $\mathcal{D}(H)$ is semisimple if and only if $H$ is semisimple and cosemisimple when $H$ is a finite-dimensional Hopf algebra.
%The question was solved in the case when $H$ is a group algebra, see the work \cite{MR1128130},
%\cite{mason1995quantum} and \cite{witherspoon1996representation}. In general, it remains open.
Jun Hu  and Yinhuo Zhang  investigated $\mathcal{D}(H)$-modules in \cite{MR2336009} and \cite{MR2352888} by using Radford's construction \cite{radford2003oriented}, especially they constructed all simple modules of $\mathcal{D}(H_8)$ under a different basis comparing with ours. \end{remark}

\section{Nichols algebras in ${}_{H_8}^{H_8}\mathcal{YD}$}
\label{section:NicholsAlgebras}
In this section,  we try to determine all the finite-dimensional Nichols algebras generated by  Yetter-Drinfel'd modules
over $H_8$. As a byproduct, we calculate Gelfand-Kirillov dimensions for some  Nichols algebras.

We begin by studying the Nichols algebras of  simple Yetter-Drinfel'd modules.
\begin{proposition}\label{NAlgdim1}
Given a simple Yetter-Drinfel'd module $M$ over $H_8$, $\dim\mathfrak{B}(M)$ ($\mathrm{Gkdim}\,\mathfrak{B}(M)$ for some cases) is presented in Table \ref{dimNAlgSimpleYDM_H8}. Especially, 
 \begin{center}
\begin{table}
\begin{tabular}{|c||c|c|c|}
\hline
 $M\in {}_{H_8}^{H_8}\mathcal{YD}$
 & condition & $\dim\mathfrak{B}(M)$ & $\mathrm{GKdim}\,\mathfrak{B}(M)$\\
\hline
 \multirow{2}{*}
{$M\langle b,g\rangle$}& $(b,g)\in\{(\pm 1, 1),(\pm 1, xy)\}$ & $\infty$& $1$\\\cline{2-4}
 &$(b,g)\in\{(\pm\mi, x),(\pm\mi, y)\}$ & $2$& $0$ \\\hline
$M\langle(1,xy)\rangle$
&  & $\infty$& $2$\\\hline
$M\langle(x,y)\rangle$
&  & $\infty$& $2$\\\hline
 \multirow{2}{*}{$M\langle(g_1,g_2)\rangle$}& $(g_1,g_2)\in\{(1,y),(x,1)\}$ & $\infty$& $\infty$\\\cline{2-4}
& $(g_1,g_2)\in\{(xy,x),(y,xy)\}$ & $8$& $0$\\\hline
 \multirow{2}{*}{$W^{b_1,b_2}$}
& $b_1=\pm 1$, $b_2=-1$ & $8$& $0$\\\cline{2-4}
& $b_1=\pm 1$, $b_2=1$ & $\infty$& $\infty$\\\hline
$W_1^a$, $W_2^a$& $a+1=\pm\sqrt{2}$ & $\infty$& \\
\hline
\end{tabular}
\vspace{3ex}
\caption{Nichols algebras of simple Yetter-Drinfel'd modules over $H_8$}
\label{dimNAlgSimpleYDM_H8}
\end{table}
\end{center}
\begin{enumerate}
\item \label{NicholsAlg:M(b,g)} 
$ \mathfrak{B}\left(M\langle b,g\rangle\right) = \left\{
\begin{array}{rl}
\mathbb{K}[p], & \text{if } (b,g)\in\{(\pm 1,1),(\pm 1,xy)\},\\
\mathbb{K}[p]/(p^{2})= \bigwedge \mathbb{K}p, &\text{if } (b,g)\in\{(\pm \mi,x), (\pm \mi,y)\}.\\
\end{array}
\right.$
\item  \label{NicholsAlg:A_2} The both braidings of $M\langle(g_1,g_2)\rangle$ for $(g_1,g_2)\in \{(xy,x),(y,xy)\}$ and $W^{b_1,-1}$ for $b_1=\pm 1$ are   Cartan type $A_2$, so their corresponding  Nichols algebras 
are isomorphic to an algebra which is generated 
by $p_1$, $p_2$ satisfying relations 
$p_1p_2p_1p_2+p_2p_1p_2p_1=0$, $p_1^2=p_2^2=0.$
\end{enumerate}
\end{proposition}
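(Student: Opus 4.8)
The plan is to reduce every entry of Table \ref{dimNAlgSimpleYDM_H8} to a computation of the braiding $c(v\otimes w)=v_{(-1)}\cdot w\otimes v_{(0)}$ on $M\otimes M$, to read off a braiding matrix in a suitable basis, and then to combine the explicit symmetrizers $\mathfrak{S}_n$ with Heckenberger's classification of Nichols algebras of diagonal type. For each simple module the actions and coactions are already explicit in Lemmas \ref{YDM2dim1}--\ref{YDM2dim3} and in the one-dimensional computation, so the only real task is to diagonalize $c$ and recognize the resulting generalized Dynkin diagram. Throughout I would use the basic principle that a diagonal braiding $c(x_i\otimes x_j)=q_{ij}\,x_j\otimes x_i$ forces a graded subalgebra $\mathbb{K}[x_i]/(x_i^{N_i})\hookrightarrow\mathfrak{B}(M)$ with $N_i=\mathrm{ord}(q_{ii})$, so that a single diagonal entry $q_{ii}=1$ (or a non-root-of-unity) already yields $\dim\mathfrak{B}(M)=\infty$.

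First I would treat the rank-one modules $M\langle b,g\rangle$. Here $c\bigl((v\boxtimes g)\otimes(v\boxtimes g)\bigr)=(g\cdot v)\boxtimes g\otimes v\boxtimes g$, and since $g$ acts on $V_1(b)$ by a power of $b^2$ one gets the scalar $q=1$ when $g\in\{1,xy\}$ (because $b^2=b^4=1$ for $b=\pm1$) and $q=b^2=-1$ when $g\in\{x,y\}$ (because $b=\pm\mi$). Hence $\mathfrak{B}(M\langle b,g\rangle)=\mathbb{K}[p]$ in the first case and $\mathbb{K}[p]/(p^2)$ in the second, which is exactly assertion \eqref{NicholsAlg:M(b,g)} and the first two rows of the table. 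For $M\langle(1,xy)\rangle$ and $M\langle(x,y)\rangle$ the two basis vectors have grouplike degrees that act by $+1$ in the relevant eigenvalues (e.g.\ $\deg(v\boxtimes xy)=xy$ acts by $(-1)(-1)=1$ on $V_1(\mi)$), so $c$ is the flip $c(e_i\otimes e_j)=e_j\otimes e_i$; therefore $\mathfrak{B}(M)$ is the polynomial algebra on two generators, giving $\dim=\infty$ and $\mathrm{GKdim}=2$.

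For the remaining rank-two modules $M\langle(g_1,g_2)\rangle$ and $W^{b_1,b_2}$ I would work in the basis diagonalizing $c$: the basis $(e_1,e_2)$ of Lemma \ref{YDM2dim1}(3), and the basis $\{p_1,p_2\}$ with $p_1\in V_1(b_2)$, $p_2\in V_1(-b_1b_2\mi)$ from the Remark. A short computation with $c(p_i\otimes p_j)=(p_i)_{(-1)}\cdot p_j\otimes(p_i)_{(0)}$ shows $c$ is of diagonal type and splits into exactly two patterns: either $q_{11}=q_{22}=-1$, $q_{12}q_{21}=-1$, which occurs for $(g_1,g_2)\in\{(xy,x),(y,xy)\}$ and for $b_2=-1$; or $q_{11}=q_{22}=1$, $q_{12}q_{21}=-1$, which occurs for $(g_1,g_2)\in\{(1,y),(x,1)\}$ and for $b_2=1$. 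In the second pattern $q_{11}=1$ gives the polynomial subalgebra $\mathbb{K}[p_1]$, and since $q_{12}q_{21}=-1\neq 1=q_{11}^{a}$ for every $a$ the Cartan condition fails at that vertex, so by Heckenberger's list $\dim=\mathrm{GKdim}=\infty$. In the first pattern the diagram is the connected $A_2$ diagram with all labels $-1$: the arithmetic root system of type $A_2$ with positive roots $\alpha_1,\alpha_2,\alpha_1+\alpha_2$, each root vector of order $2$. Taking $p_1,p_2$ and $p_{12}=p_1p_2-q_{12}\,p_2p_1$, the defining relations are $p_1^2=p_2^2=0$ and $p_{12}^2=0$; expanding the last relation with $p_1^2=p_2^2=0$ yields precisely $p_1p_2p_1p_2+p_2p_1p_2p_1=0$, proving \eqref{NicholsAlg:A_2}, and the resulting PBW basis gives Hilbert series $1+2t+2t^2+2t^3+t^4$, so $\dim\mathfrak{B}(M)=8$ and $\mathrm{GKdim}=0$.

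The hard part is the last family $W_1^a,W_2^a$. Because their coactions are written through $w$ and the scalar $\sqrt{\mi}$, the braiding is \emph{not} diagonal in the displayed basis; indeed a direct computation gives $c\bigl(w_1^{(1)}\otimes w_1^{(1)}\bigr)$ proportional to $w_2^{(1)}\otimes w_2^{(1)}$, so $c$ permutes the two diagonal directions. The plan is to pass to the $z$-eigenbasis $p_1^{(k)},p_2^{(k)}$ from the proof of Lemma \ref{YDM2dim3}, rewrite the coaction in that basis, and recompute $c$; the main obstacle is precisely this bookkeeping with $w$, the idempotents $f_{jk}$ and the fourth root $\sqrt{\mi}$. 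I expect the diagonalized braiding to exhibit a self-braiding equal to $1$ (or a non-root-of-unity), which by the principle of the first paragraph forces $\dim\mathfrak{B}(W_i^a)=\infty$; the exact value of $\mathrm{GKdim}$ is not needed and is left blank in the table.
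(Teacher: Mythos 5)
Your handling of the diagonal-type cases coincides, in substance, with the paper's own proof: the self-braiding scalar for $M\langle b,g\rangle$, the flip braiding for $M\langle(1,xy)\rangle$ and $M\langle(x,y)\rangle$, and the two braiding patterns for $M\langle(g_1,g_2)\rangle$ and $W^{b_1,b_2}$ in the bases of Lemma \ref{YDM2dim1} and of the remark following Lemma \ref{YDM2dim2} are exactly what the paper computes; your derivation of the relations $p_1^2=p_2^2=0$, $p_1p_2p_1p_2+p_2p_1p_2p_1=0$ by expanding $p_{12}^2=0$ (legitimate here since $q_{12}=\pm 1$) is an acceptable substitute for the paper's citation of \cite{MR2136919}. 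One slip, however: Heckenberger's classification \cite{heckenberger2009classification} governs finite \emph{dimension} only, so it cannot justify the entries $\mathrm{GKdim}\,\mathfrak{B}(M)=\infty$ in Table \ref{dimNAlgSimpleYDM_H8}; a braiding absent from that list can perfectly well have finite GK-dimension (already $\mathbb{K}[p]$ has $\dim=\infty$ and $\mathrm{GKdim}=1$). Your dimension conclusions for $(g_1,g_2)\in\{(1,y),(x,1)\}$ and $W^{b_1,1}$ survive via the polynomial subalgebra $\mathbb{K}[p_1]$, but the GK-dimension claims need the specific result that $q_{11}=1$ together with $q_{12}q_{21}\neq 1$ forces infinite GK-dimension, i.e. \cite[Lemma 2.8]{andruskiewitsch2016finite}, which is what the paper invokes.

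The genuine gap is the family $W_1^a$, $W_2^a$, and it is not a matter of bookkeeping: this braided vector space is not of diagonal type in \emph{any} basis, so the diagonalization you defer to never exists. Concretely, the braiding satisfies $c\bigl(w_1^{(1)}\otimes w_1^{(1)}\bigr)=\mu\, w_2^{(1)}\otimes w_2^{(1)}$, $c\bigl(w_2^{(1)}\otimes w_2^{(1)}\bigr)=\nu\, w_1^{(1)}\otimes w_1^{(1)}$, and $c\bigl(w_i^{(1)}\otimes w_j^{(1)}\bigr)=\lambda_{ij}\, w_i^{(1)}\otimes w_j^{(1)}$ for $i\neq j$, where $\mu\nu=-\theta^2$ and $\lambda_{ij}=\pm\theta$ with $\theta=\frac{1}{2}(\mi-1)(a+1)\neq 0$. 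If some $u=\alpha w_1^{(1)}+\beta w_2^{(1)}\neq 0$ had $c(u\otimes u)=\lambda\, u\otimes u$, then $\alpha=0$ or $\beta=0$ is impossible (the image would lie in the wrong coordinate line), so $\alpha\beta\neq 0$; comparing the $w_1^{(1)}\otimes w_1^{(1)}$ and $w_2^{(1)}\otimes w_2^{(1)}$ coefficients gives $\lambda^2=\mu\nu=-\theta^2$, while the mixed coefficients give $\lambda=\lambda_{12}=\pm\theta$, i.e. $\lambda^2=\theta^2$, a contradiction. Hence no vector has diagonal self-braiding, your ``$q_{ii}=1$ or non-root-of-unity'' principle is never applicable, and $\dim\mathfrak{B}(W_i^a)=\infty$ remains unproved in your proposal. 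The paper's argument here is of a different kind: it computes, by induction, that $\bigl(w_1^{(1)}\otimes w_2^{(1)}\bigr)^{\otimes n}$ and $\bigl(w_1^{(1)}\otimes w_2^{(1)}\bigr)^{\otimes n}\otimes w_1^{(1)}$ are eigenvectors of the quantum symmetrizers $\mathfrak{S}_{2n-1,1}$, $\mathfrak{S}_{2n,1}$ (hence, via $\mathfrak{S}_n=(\mathfrak{S}_{n-1}\otimes\mathrm{id})\mathfrak{S}_{n-1,1}$, of the full symmetrizers) with explicit nonzero eigenvalues, so these tensors represent nonzero elements of $\mathfrak{B}^n\bigl(W_1^a\bigr)$ in every degree $n$. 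Some such non-diagonal argument is unavoidable for this family, and it is the one piece your plan is missing.
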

%\renewcommand{\_}[1]{_{\left( #1 \right)}}
%\renewcommand{\^}[1]{^{\left( #1 \right)}}
%\def\q{{\bf q}}
%Let $H$ be a Hopf algebra.
%Let $(g, \chi)$ be  that is $g\in G(H)$ and $\chi\in \mathrm{Alg}(H, \mathbb{K})$ satisfy $\chi(h)\,g = \chi(h_{(2)}) h_{(1)}\, g\, S(h_{(3)})$ for all $h\in H$; this implies that $g\in Z(G(H))$.
%Then   $\mathbb{K}_g^{\chi} := \mathbb{K}$ with coaction
%$x\mapsto  g\otimes x$ and action $h\cdot x=\chi(h)x$ is an object in $\ydh$. Conversely, any one-dimensional Yetter-Drinfel'd module over $H$ arises in this way. 
%Let $V$ be a braided vector space of \emph{diagonal type}, that is,
%there are a basis $\left(x_{i}\right)_{i \in \mathbb{I}}$ of $V$ and a matrix $\q=\left(q_{ij}\right)_{i, j \in \mathbb{I}}$  such that $c(x_i\otimes x_j)=q_{ij}x_j\otimes x_i$.
%A \emph{principal realization} of $V$ over $H$ is a family 
%$\left((g_i, \chi_i)\right)_{i \in \mathbb{I}}$ of YD-pairs such that
%$\chi_j(g_i)=q_{ij}$, $i, j \in \mathbb{I}$; so that $V\in\ydh$ 
%up to identifying $\mathbb{K} x_i \simeq \mathbb{K}_{g_i}^{\chi_i}$, and the braiding $c$ is the categorical one from $\ydh$. 
\begin{proof}
%\begin{enumerate}
\begin{itemize}\renewcommand{\labelitemi}{$\diamond$}
\item 
 Because 
$ c(p\otimes p) =g\cdot p\otimes p= \left\{
\begin{array}{rl}
p\otimes p, & \text{if } (b,g)\in\{(\pm 1,1),(\pm 1,xy)\}\\
-p\otimes p, &\text{if } (b,g)\in\{(\pm \mi,x), (\pm \mi,y)\}\\
\end{array}
\right.
$
under the assumption that $M\langle b,g\rangle=\mathbb{K}p$, 
the part \eqref{NicholsAlg:M(b,g)}  is obvious.
\item As for the part \eqref{NicholsAlg:A_2}, we only give a proof for the case $W^{b_1,-1}$ for $b_1=\pm 1$. Let 
$p_1=w_1^{b_1,b_2}+\mi b_1w_2^{b_1,b_2}$ and 
$p_2=w_1^{b_1,b_2}-\mi b_1w_2^{b_1,b_2}$, then 
the braiding of $W^{b_1,b_2}$ is given by
\begin{eqnarray*}
c(p_1\otimes p_1)&=&b_2 p_1\otimes p_1, \quad
c(p_2\otimes p_2)=b_2 p_2\otimes p_2, \\
c(p_1\otimes p_2)&=&-b_2 p_2\otimes p_1, ~~
c(p_2\otimes p_1)=b_2 p_1\otimes p_2. 
\end{eqnarray*}
When $b_2=1$, $\mathrm{GKdim}\,\mathfrak{B}\left(W^{b_1,1}\right)=\infty$ according to \cite[Lemma 2.8]{andruskiewitsch2016finite}.  
When  $b_2=-1$, the braiding is type $A_2$. As discussed in \cite{MR2136919}, the Nichols algebra 
$\mathfrak{B}\left(W^{b_1,-1}\right)$ is generated 
by $p_1$, $p_2$ with relations 
$p_1p_2p_1p_2+p_2p_1p_2p_1=0$, $p_1^2=p_2^2=0.$
So $\mathrm{dim}\left(\mathfrak{B}\left(W^{b_1,-1}\right)\right)=8$. 
\item
 Let $p_1=v\boxtimes 1, p_2=v\boxtimes xy\in M\langle(1,xy)\rangle$, then 
 $c(p_j\otimes p_k)=p_k\otimes p_j$, where $j,k=1,2$. 
 If we view $M\langle(1,xy)\rangle=\mathbb{K}p_1\oplus \mathbb{K}p_2$ as braided vector spaces, then 
$\mathrm{GKdim}~\mathfrak{B}\left(M\langle(1,xy)
\rangle\right)= \mathrm{GKdim}\,\mathfrak{B}(\mathbb{K}p_1)+
\mathrm{GKdim}\,\mathfrak{B}(\mathbb{K}p_2)=2$ by Lemma \ref{TensorNicholsAlg}.
Similarly, 
$\mathrm{GKdim}\,\mathfrak{B}\left(M\langle(x,y)\rangle\right)=2$.
\item 
Let $p_1=(v_1+v_2)\boxtimes 1$, 
$p_2=(v_1-v_2)\boxtimes y\in 
M\langle(1,y)\rangle$. The braiding is given by
\begin{eqnarray*}
c(p_1\otimes p_1)=p_1\otimes p_1,\quad
c(p_1\otimes p_2)=p_2\otimes p_1,\\
c(p_2\otimes p_1)=-p_1\otimes p_2,\quad
c(p_2\otimes p_2)=p_2\otimes p_2.
\end{eqnarray*}
By \cite[Lemma 2.8]{andruskiewitsch2016finite}, 
$\mathrm{GKdim}\mathfrak{B}\left(M\langle(1,y)\rangle\right)=\infty$. For the same reason,  we obtain $\mathrm{GKdim}\mathfrak{B}\left(M\langle(x,1)\rangle\right)=\infty$.
\item Let $\theta=\frac{1}{2}(\mi-1)(a+1)$, then 
\begin{eqnarray*}
c\left(w_1^{(1)}\otimes w_1^{(1)}\right)=
-\theta w_2^{(1)}\otimes w_2^{(1)},\quad
c\left(w_1^{(1)}\otimes w_2^{(1)}\right)=
\theta w_1^{(1)}\otimes w_2^{(1)},\\
c\left(w_2^{(1)}\otimes w_1^{(1)}\right)=
-\theta w_2^{(1)}\otimes w_1^{(1)},\quad
c\left(w_2^{(1)}\otimes w_2^{(1)}\right)=
\theta w_1^{(1)}\otimes w_1^{(1)},\\
c\left(\mi w_1^{(1)}\otimes w_1^{(1)}+w_2^{(1)}\otimes w_2^{(1)}\right)
=-\mi\theta \left(\mi w_1^{(1)}\otimes w_1^{(1)}+w_2^{(1)}\otimes w_2^{(1)}\right), \\
c\left(-\mi w_1^{(1)}\otimes w_1^{(1)}+w_2^{(1)}\otimes w_2^{(1)}\right)
=\mi\theta \left(-\mi w_1^{(1)}\otimes w_1^{(1)}+w_2^{(1)}\otimes w_2^{(1)}\right), 
\end{eqnarray*}
By induction, 
\begin{eqnarray*}
\mathfrak{S}_{2n-1,1}\left(\left(w_1^{(1)}\otimes w_2^{(1)}\right)^{\otimes n}\right)=\frac{(1+\theta)[1-(-\theta^2)^n]}{1+\theta^2}\left(\left(w_1^{(1)}\otimes w_2^{(1)}\right)^{\otimes n}\right),\\
\mathfrak{S}_{2n,1}\left(\left(w_1^{(1)}\otimes w_2^{(1)}\right)^{\otimes n}\otimes w_1^{(1)}\right)=\frac{1-\theta+(-1)^n\theta^{2n+1}(1+\theta)}{1+\theta^2}\left(\left(w_1^{(1)}\otimes w_2^{(1)}\right)^{\otimes n}\otimes w_1^{(1)}\right).
\end{eqnarray*} 
It means that $\left(w_1^{(1)}\otimes w_2^{(1)}\right)^{\otimes n}$ is an eigenvector of $\mathfrak{S}_{2n-1}$ and 
$\left(w_1^{(1)}\otimes w_2^{(1)}\right)^{\otimes n}\otimes w_1^{(1)}$ is an eigenvector 
of $\mathfrak{S}_{2n}$ both with nonzero eigenvalue.
 So $\dim\mathfrak{B}\left(W_1^a\right)=\infty$. And $\dim\mathfrak{B}\left(W_2^a\right)=\infty$ is similar to prove.
%\end{enumerate}
\end{itemize}
\end{proof}

\begin{proposition}\label{NicholsAlg:TensorOfTwoSimpleObjects}
\begin{enumerate}
\item  \label{NicholsAlg:Tensor1}$\mathfrak{B}\left[M\langle b,g\rangle\oplus 
M\left<b^\prime,g^\prime\right>\right]\simeq 
\mathfrak{B}\left(M\langle b,g\rangle\right)
\otimes 
\mathfrak{B}\left( 
M\left<b^\prime,g^\prime\right>\right)$ for 
 $(b,g)$, $(b^\prime,g^\prime)$ $\in$ $\{(\pm 1,1)$, $(\pm 1,xy)$,  $(\pm \mi,x)$, $(\pm \mi,y)\}$.
\item  When $(b,g)\in\{(\pm1,1),(\pm1,xy)\}$, then 
\begin{align*}
\mathfrak{B}\left[M\langle b,g\rangle\oplus
M\langle(1,xy)\rangle\right]
&\simeq \mathfrak{B}\left(M\langle b,g\rangle\right)\otimes 
 \mathfrak{B}\left(M\langle(1,xy)\rangle\right),\\
\mathfrak{B}\left[M\langle b,g\rangle\oplus
M\langle(x,y)\rangle\right]
&\simeq \mathfrak{B}\left(M\langle b,g\rangle\right)\otimes 
 \mathfrak{B}\left(M\langle(x,y)\rangle\right).
\end{align*}
\item  \label{NicholsAlg:Tensor3}$\mathfrak{B}\left[M\langle b,g\rangle\oplus
M\langle(g_1,g_2)\rangle\right]
\simeq \mathfrak{B}\left(M\langle b,g\rangle\right)\otimes 
 \mathfrak{B}\left(M\langle(g_1,g_2)\rangle\right)$ for the following cases
       \begin{enumerate}
       \item $(b,g)=(\pm\mi, x)$, $(g_1,g_2)=(xy,x)$;
       \item $(b,g)=(\pm\mi, y)$, $(g_1,g_2)=(y,xy)$;
       \item $(b,g)=(\pm1,1)$, $(g_1,g_2)\in$  
        $\{(xy, x),(y, xy)\}$.
       \end{enumerate}
\item \label{NicholsAlg:Tensor4}$\mathfrak{B}\left[M\langle b,g\rangle\oplus
W^{b_1,-1}\right]
\simeq \mathfrak{B}\left(M\langle b,g\rangle\right)\otimes 
 \mathfrak{B}\left(W^{b_1,-1}\right)$ for the following cases
       \begin{enumerate}
       \item $(b,g)\in\{(1, 1), (1,xy)\}$, $b_1=\pm 1$;
       \item $(b,g)\in\{(\mi, x), (\mi,y)\}$, $b_1= 1$;
       \item $(b,g)\in\{(-\mi, x), (-\mi,y)\}$, $b_1= -1$.
       \end{enumerate}
\item $\mathfrak{B}\left[M\langle(xy,x)\rangle
\oplus M\langle(y,xy)\rangle\right]
\simeq \mathfrak{B}\left(M\langle(xy,x)\rangle\right)
\otimes 
\mathfrak{B}\left(M\langle(y,xy)\rangle\right)$.
\item \label{NicholsAlg:Tensor6}$\mathfrak{B}\left(W^{1,-1}\oplus W^{-1,-1}\right)
\simeq \mathfrak{B}\left(W^{1,-1}\right)
\otimes 
\mathfrak{B}\left(W^{-1,-1}\right)$.
\item \label{NicholsAlg:Tensor7}$\mathrm{GKdim}\,\mathfrak{B}\left[M\langle b,g\rangle\oplus
M\langle(g_1,g_2)\rangle\right] =\infty$ for $(b,g)=(\pm\mi, x)$, $(g_1,g_2)=(y,xy)$ or $(b,g)=(\pm\mi, y)$, $(g_1,g_2)=(xy,x)$.
\item \label{NicholsAlg:Tensor8}$\mathrm{GKdim}\,\mathfrak{B}\left[M\langle b,g\rangle\oplus
W^{b_1,-1}\right] =\infty$ for $(b,g)\in\{(\mi, x), (\mi,y)\}$, $b_1= -1$  or $(b,g)\in\{(-\mi, x), (-\mi,y)\}$, $b_1= 1$.
\item \label{NicholsAlg:Tensor9}$\mathrm{dim}~\mathfrak{B}\left(\left(M\langle(g_1,g_2)\rangle\right)^{\oplus 2}\right)=\infty$  for
 $(g_1,g_2)\in \{(xy,x),(y,xy)\}$.
 \item \label{NicholsAlg:Tensor10}$\dim \mathfrak{B}\left(W^{b_1,-1}\oplus W^{b_1,-1}
\right)=\infty$ for $b_1=\pm 1$.
\end{enumerate}
\end{proposition}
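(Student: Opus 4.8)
The plan is to split the ten statements into the first six, which assert a tensor-product decomposition, and the last four, which assert infinite (GK-)dimension. For the first six I would in each case verify the single hypothesis of Lemma~\ref{TensorNicholsAlg}, that $c_{M_1,M_2}c_{M_2,M_1}=\mathrm{id}_{M_2\otimes M_1}$, and then quote the lemma; the decomposition and the additivity of $\mathrm{GKdim}$ follow at once. The verification is a direct computation from the braiding $c(v\otimes w)=v_{(-1)}\cdot w\otimes v_{(0)}$ using the explicit actions and coactions of Lemmas~\ref{YDM2dim1} and \ref{YDM2dim2}. For two one-dimensional summands, spanned by $p$ for $M\langle b,g\rangle$ and $p'$ for $M\langle b',g'\rangle$, one has $c(p\otimes p')=(g\cdot p')\otimes p$ and $c(p'\otimes p)=(g'\cdot p)\otimes p'$, so the composite is the scalar $($eigenvalue of $g$ on $V_1(b'))$ times $($eigenvalue of $g'$ on $V_1(b))$; a short case analysis according to whether $g,g'\in\{1,xy\}$ (so $b^2=1$) or $g,g'\in\{x,y\}$ (so $b^2=-1$) shows this product equals $1$ in exactly the combinations listed in \eqref{NicholsAlg:Tensor1}. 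The mixed decompositions \eqref{NicholsAlg:Tensor3}, \eqref{NicholsAlg:Tensor4} and the purely two-dimensional decomposition \eqref{NicholsAlg:Tensor6} are handled in the same way, now checking that the product of the two cross-braiding $2\times2$ matrices is the identity.

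For the last four statements the hypothesis of Lemma~\ref{TensorNicholsAlg} fails, and instead I would pass to a basis of $c$-eigenvectors and read off the generalized Dynkin diagram. Each summand here is either a point with self-braiding $-1$ (the modules $M\langle\pm\mi,g\rangle$) or a copy of the Cartan $A_2$ braiding with parameter $-1$ (the modules $M\langle(g_1,g_2)\rangle$ and $W^{b_1,-1}$), so the full braided vector space is of diagonal type with every diagonal entry equal to $-1$; all the information lies in the products $\widetilde q_{ij}=q_{ij}q_{ji}$. Computing these cross-products from the coactions, I expect \eqref{NicholsAlg:Tensor7} and \eqref{NicholsAlg:Tensor8} to produce a rank-three diagram in which all three $\widetilde q_{ij}=-1$ --- a triangle, whose associated Cartan matrix is the affine one of type $A_2^{(1)}$ --- and \eqref{NicholsAlg:Tensor9} and \eqref{NicholsAlg:Tensor10} to produce a rank-four diagram that is a four-cycle with all $\widetilde q_{ij}=-1$, of affine Cartan type $A_3^{(1)}$. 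It is worth noting the contrast with \eqref{NicholsAlg:Tensor6}: there the two summands differ, the cross-braiding is symmetric, and the diagram is the disconnected $A_2\times A_2$; here the two copies coincide, the cross-braiding is not symmetric, and the diagram closes up into a cycle.

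These cyclic diagrams are not of finite type: they are absent from Heckenberger's classification of arithmetic root systems \cite{heckenberger2009classification}, equivalently their root systems are infinite. Hence the associated Nichols algebras are infinite-dimensional, which gives \eqref{NicholsAlg:Tensor9} and \eqref{NicholsAlg:Tensor10}; for the rank-three triangles in \eqref{NicholsAlg:Tensor7} and \eqref{NicholsAlg:Tensor8} I would strengthen this to $\mathrm{GKdim}=\infty$ by invoking the criterion that a diagonal-type Nichols algebra with infinite root system has infinite Gelfand--Kirillov dimension (cf.\ \cite{andruskiewitsch2016finite}). I also use that the Nichols algebra of a braided subspace embeds as a subalgebra, so that these conclusions propagate to larger modules containing the relevant summands.

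The genuinely delicate point is concentrated in the last four statements rather than the first six, where the computations, though numerous, are mechanical. The difficulty is that here infinite-dimensionality is not inherited from any proper sub-braided-subspace: every two-element subdiagram of the triangle is of type $A_2$ and every three-element subdiagram of the four-cycle is of type $A_3$, all of them finite, so no smaller piece already forces infinitude. The obstruction therefore appears only at the full rank of the cycle, and the crux is to identify the diagram correctly as an affine cycle and to pin down the exact non-finiteness statement yielding ``$\dim=\infty$'' in \eqref{NicholsAlg:Tensor9} and \eqref{NicholsAlg:Tensor10} and the sharper ``$\mathrm{GKdim}=\infty$'' in \eqref{NicholsAlg:Tensor7} and \eqref{NicholsAlg:Tensor8}.
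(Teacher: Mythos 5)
Your proposal is correct and takes essentially the same route as the paper: parts (1)--(6) are obtained, exactly as in the paper, by checking $c_{M_1,M_2}c_{M_2,M_1}=\mathrm{id}$ and invoking Lemma~\ref{TensorNicholsAlg}, and parts (7)--(10) by computing the generalized Dynkin diagrams in a diagonal basis --- the all-$(-1)$ triangle for (7)/(8) and the all-$(-1)$ four-cycle for (9)/(10) --- and excluding them by Heckenberger's classification \cite{heckenberger2009classification}. If anything, your treatment of (7)/(8) is more precise than the paper's: its proof literally concludes only $\dim\mathfrak{B}=\infty$ from the classification although the statement asserts $\mathrm{GKdim}\,\mathfrak{B}=\infty$, and the affine Cartan type criterion from \cite{andruskiewitsch2016finite} that you invoke is exactly what closes that gap.
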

\begin{remark}
According to the above two proposition, we calculate some Nichols algebras over direct sum of two simple objects of ${}_{H_8}^{H_8}\mathcal{YD}$ in Table \ref{dimNAlgTwoSimpleYDM_H8}.
\end{remark}
 \begin{center}
\begin{table}
\newcommand{\tabincell}[2]{\begin{tabular}{@{}#1@{}}#2\end{tabular}}
\begin{tabular}{|c|c|c|c|}
\hline
 $M\in {}_{H_8}^{H_8}\mathcal{YD}$
 & condition & $\dim\mathfrak{B}(M)$ & $\mathrm{GKdim}\mathfrak{B}(M)$\\
\hline
 \multirow{5}{*}
{\tabincell{c}{$M\langle b_1,g_1\rangle$\\
$\oplus M\langle b_2,g_2\rangle$}}
&\tabincell{c}{$(b_1,g_1)$, $(b_2,g_2)\in$
\\ $\{(\pm 1, 1),(\pm 1, xy)\}$ }& $\infty$& $2$\\\cline{2-4}
&\tabincell{c}{$(b_1,g_1)\in$$\{(\pm 1, 1),(\pm 1, xy)\}$
\\$(b_2,g_2)\in$  $\{(\pm\mi, x),(\pm\mi, y)\}$}
& $\infty$& $1$\\\cline{2-4}
 &\tabincell{c}{$(b_1,g_1)$, $(b_2,g_2)\in$
\\$\{(\pm\mi, x),(\pm\mi, y)\}$ }& $4$& $0$ \\\hline
\tabincell{c}{$M\langle b,g\rangle$\\
 $\oplus M\langle(1,xy)\rangle$}
& $(b,g)\in$ $\{(\pm 1, 1),(\pm 1, xy)\}$ & $\infty$& $3$\\\hline
%&  & $\infty$& $2$\\\hline
\tabincell{c}{$M\langle b,g\rangle$ \\
$\oplus M\langle(x,y)\rangle$}
&$(b,g)\in$ $\{(\pm 1, 1),(\pm 1, xy)\}$& $\infty$& $3$\\\hline
 \multirow{3}{*}{\tabincell{c}{$M\langle(g_1,g_2)\rangle$\\
 $\oplus M\langle(g_1^\prime,g_2^\prime)\rangle$}}& 
 \tabincell{c}{$(g_1,g_2)=(g_1^\prime,g_2^\prime)=$\\
$(xy,x)$ or $(y,xy)$ }& $\infty$& $$\\\cline{2-4}
&\tabincell{c}{ $(g_1,g_2)=(xy,x)$, \\$(g_1^\prime,g_2^\prime)
=(y,xy)$} & $64$& $0$\\\hline
 \multirow{2}{*}{$W^{b_1,-1}\oplus W^{b_1^\prime,-1}$}
& $b_1=b_1^\prime=\pm 1$ & $\infty$& $$\\\cline{2-4}
& $b_1=1$, $b_1^\prime=-1$ & $64$& $0$\\\hline
$M\langle(g_1,g_2)\rangle\oplus W^{b_1,-1}$& 
\tabincell{c}{ $(g_1,g_2)=(xy,x)$ \\or $(y,xy)$, $b_1=\pm 1$ }& ? & \\
\hline
 \multirow{6}{*}
{\tabincell{c}{$M\langle b,g\rangle$\\
$\oplus M\langle(g_1,g_2)\rangle$}}
&\tabincell{c}{$(b,g)=(\pm\mi, x)$, $(g_1,g_2)=(xy,x)$ }& $16$& $0$\\\cline{2-4}
&\tabincell{c}{$(b,g)=(\pm\mi, x)$, $(g_1,g_2)=(y,xy)$ }& $\infty$& $$\\\cline{2-4}
&\tabincell{c}{$(b,g)=(\pm\mi, y)$, $(g_1,g_2)=(xy,x)$ }& $\infty$& $$\\\cline{2-4}
&\tabincell{c}{$(b,g)=(\pm\mi, y)$, $(g_1,g_2)=(y,xy)$ }& $16$& $0$\\\cline{2-4}
&\tabincell{c}{$(b,g)\in$$\{(\pm 1, 1)\}$
\\$(g_1,g_2)\in$  $\{(xy, x),(y, xy)\}$}
& $\infty$& $1$\\\hline
 \multirow{6}{*}
{\tabincell{c}{$M\langle b,g\rangle$
$\oplus W^{b_1,-1}$}}
&\tabincell{c}{$(b,g)\in\{(1, 1), (1,xy)\}$, $b_1=\pm 1$ }& $\infty$& $1$\\\cline{2-4}
&\tabincell{c}{$(b,g)\in\{(\mi, x), (\mi,y)\}$, $b_1= 1$ }& $16$& $0$\\\cline{2-4}
&\tabincell{c}{$(b,g)\in\{(\mi, x), (\mi,y)\}$, $b_1= -1$  }& $\infty$& $$\\\cline{2-4}
&\tabincell{c}{$(b,g)\in\{(-\mi, x), (-\mi,y)\}$, $b_1= -1$ }& $16$& $0$\\\cline{2-4}
&\tabincell{c}{$(b,g)\in\{(-\mi, x), (-\mi,y)\}$, $b_1= 1$  }& $\infty$& $$\\\hline
\end{tabular}
\vspace{3ex}
\caption{Nichols algebras over direct sum of two simple objects in ${}_{H_8}^{H_8}\mathcal{YD}$}
\label{dimNAlgTwoSimpleYDM_H8}
\end{table}
\end{center}

\begin{proof}
The part \eqref{NicholsAlg:Tensor1}-\eqref{NicholsAlg:Tensor6} are direct results of  Lemma  \ref{TensorNicholsAlg}. We only prove some cases as a byproduct in the following.
\begin{itemize}\renewcommand{\labelitemi}{$\diamond$}
\item 
Let $p_1=(v_1+v_2)\boxtimes g_1$, 
$p_2=(v_1-v_2)\boxtimes g_2\in M\langle(g_1,g_2)\rangle$, where $(g_1,g_2)\in\{(xy,x),(y,xy)\}$. Let $p=v\boxtimes g
\in M\langle b,g\rangle$, then
\begin{eqnarray*}
c(p\otimes p_1) &=& \left\{
\begin{array}{rl}
-p_1\otimes p, & \text{if }g\in\{y,xy\}\\
p_1\otimes p, &\text{if } g\in\{1,x\},\\
\end{array}\right.
\quad
c(p\otimes p_2) = \left\{
\begin{array}{rl}
-p_2\otimes p, & \text{if }g\in\{x,xy\}\\
p_2\otimes p, &\text{if } g\in\{1,y\},\\
\end{array}\right.
\\
c(p_1\otimes p) &=& \left\{
\begin{array}{rl}
b^2p\otimes p_1, & \text{if } g_1=y\\
p\otimes p_1, &\text{if } g_1=xy,\\
\end{array}\right.
\quad~~
c(p_2\otimes p) = \left\{
\begin{array}{rl}
b^2p\otimes p_2, & \text{if }g_2=x\\
p\otimes p_2, &\text{if } g_2=xy.\\
\end{array}\right.
\end{eqnarray*}
\begin{itemize}\renewcommand{\labelitemii}{$\circ$}
\item When $(g_1,g_2)=(y,xy)$ and $(b,g)=(\pm\mi,x)$, then
          \begin{align*}
          c(p\otimes p_1)&=p_1\otimes p_1, 
       &c(p\otimes p_2)&=-p_2\otimes p,\\
          c(p_1\otimes p)&=-p\otimes p_1,
       &c(p_2\otimes p)&=p\otimes p_2.   
          \end{align*}
          The generalized Dynkin diagram is given by Figure \ref{figureEightA1A2}. According to \cite{heckenberger2009classification}, 
          $\dim \mathfrak{B}[M\langle\pm\mi,x\rangle
          \oplus M\langle(y,xy)\rangle]=\infty$.
          \begin{figure}[h!]
$$
\xy 
(0,0)*\cir<4pt>{}="E1", 
(30,0)*\cir<4pt>{}="E2",
(15,25.98)*\cir<4pt>{}="E3",
(-4,0)*+{-1},
(34,0)*+{-1},
(14,2)*+{-1},
(11,25.98)*+{-1},
(4.5,13)*+{-1},
(26,13)*+{-1},
\ar @{-}"E1";"E2"
\ar @{-}"E2";"E3"
\ar @{-}"E3";"E1"
\endxy
$$
\caption{}
\label{figureEightA1A2}
\end{figure}
\item When $(g_1,g_2)=(xy,x)$ and $(b,g)=(\pm\mi,y)$,  
the generalized Dynkin diagram associated to the braiding is given by Figure \ref{figureEightA1A2}. According to \cite{heckenberger2009classification}, 
          $\dim \mathfrak{B}[M\langle\pm\mi,y\rangle
          \oplus M\langle(xy,x)\rangle]=\infty$. We finish  the part \eqref{NicholsAlg:Tensor7}.
\item As for cases listed in the part \eqref{NicholsAlg:Tensor6},  $\mathfrak{B}\left[M\langle b,g\rangle\oplus
M\langle(g_1,g_2)\rangle\right]
\simeq \mathfrak{B}\left(M\langle b,g\rangle\right)\otimes 
 \mathfrak{B}\left(M\langle(g_1,g_2)\rangle\right)$ by Lemma \ref{TensorNicholsAlg}.
\end{itemize}
\item Let $p=v\boxtimes g\in M\langle b,g\rangle$, where $(b,g)\in\{(\pm 1,1),(\pm 1,xy), (\pm \mi,x), (\pm \mi,y)\}$, then 
\begin{eqnarray*}
c(p\otimes w_1^{b_1,-1}) &=& \left\{
\begin{array}{rl}
w_1^{b_1,-1}\otimes p, & \text{if }~
 (b,g)\in\{(\pm 1,1),(\pm 1,xy)\}\\
\mi b_1 w_2^{b_1,-1}\otimes p, &\text{if }~ 
(b,g)\in\{(\pm\mi,x),(\pm \mi,y)\},\\
\end{array}\right.
\\
c(p\otimes w_2^{b_1,-1}) &=& \left\{
\begin{array}{rl}
w_2^{b_1,-1}\otimes p, & \text{if }~
 (b,g)\in\{(\pm 1,1),(\pm 1,xy)\}\\
-\mi b_1 w_1^{b_1,-1}\otimes p, &\text{if }~
 (b,g)\in\{(\pm\mi,x),(\pm \mi,y)\},\\
\end{array}\right.
\\
c(w_1^{b_1,-1}\otimes p) &=& \left\{
\begin{array}{rl}
bp\otimes w_1^{b_1,-1}, & \text{if } ~
 (b,g)\in\{(\pm 1,1),(\pm 1,xy)\}\\
bp\otimes w_2^{b_1,-1}, &\text{if } ~
 (b,g)\in\{(\pm\mi,x),(\pm \mi,y)\},\\
\end{array}\right.
\\
c(w_2^{b_1,-1}\otimes p) &=& \left\{
\begin{array}{rl}
bp\otimes w_2^{b_1,-1}, & \text{if }~
 (b,g)\in\{(\pm 1,1),(\pm 1,xy)\}\\
-bp\otimes w_1^{b_1,-1}, &\text{if }~
 (b,g)\in\{(\pm\mi,x),(\pm \mi,y)\}.\\
\end{array}\right.
\end{eqnarray*}
\begin{itemize}\renewcommand{\labelitemii}{$\circ$}
\item In case $(b,g)\in\{(1,1),(1,xy)\}$,  according to Lemma \ref{TensorNicholsAlg}, we have 
$$\mathfrak{B}\left(M\langle b,g\rangle\oplus W^{b_1, -1}\right)\simeq \mathfrak{B}\left(M\langle b,g\rangle\right)\otimes \mathfrak{B}\left( W^{b_1, -1}\right).$$
\item In case $(b,g)\in\{(\pm\mi,x),(\pm \mi,y)\}$, if $\mi b_1b=-1$,  according to Lemma \ref{TensorNicholsAlg}, we have 
$\mathfrak{B}\left(M\langle b,g\rangle\oplus W^{b_1, -1}\right)\simeq \mathfrak{B}\left(M\langle b,g\rangle\right)\otimes \mathfrak{B}\left( W^{b_1, -1}\right)$.
If $\mi b_1b=1$, the generalized Dynkin diagram associated to the braiding of $M\langle b,g\rangle$ $\oplus$ $W^{b_1, -1}$ is 
given by Figure \ref{figureEightA1A2}. Now we finish the part 
\eqref{NicholsAlg:Tensor4} and \eqref{NicholsAlg:Tensor8}.
\end{itemize}
\item As for $(g_1,g_2)\in \{(xy,x),(y,xy)\}$,  $\mathrm{dim}~\mathfrak{B}\left(\left(M\langle(g_1,g_2)\rangle\right)^{\oplus 2}\right)=\infty$ by \cite{heckenberger2009classification},  since the generalized Dynkin
diagram associated to the braiding is given by Figure
\ref{figureEightM23}.
\begin{figure}[h!]
$$
\xy 
(0,0)*\cir<4pt>{}="E1", 
(20,0)*\cir<4pt>{}="E2",
(20,20)*\cir<4pt>{}="E3",
(0,20)*\cir<4pt>{}="E4",
(-4,0)*+{-1},
(-3,10)*+{-1},
(-4,20)*+{-1},
(24,20)*+{-1},
(24,0)*+{-1},
(23,10)*+{-1},
(10,22)*+{-1},
(10,2)*+{-1},
\ar @{-}"E1";"E2"
\ar @{-}"E2";"E3"
\ar @{-}"E4";"E1"
\ar @{-}"E4";"E3"
\endxy
$$
\caption{}
\label{figureEightM23}
\end{figure}
\item As for $W^{b_1,-1}\oplus W^{b_1^\prime,-1}$
with $b_1$ and $b_1^\prime$ in $\{\pm 1\}$. Let 
$p_1=w_1^{b_1,-1}+\mi b_1^\prime w_2^{b_1,-1}$,  
$p_2=w_1^{b_1,-1}-\mi b_1^\prime w_2^{b_1,-1}$, 
$p_1^\prime=w_1^{b_1^\prime,-1}+\mi b_1^\prime w_2^{b_1^\prime,-1}$ and 
$p_2^\prime=w_1^{b_1^\prime,-1}-\mi b_1^\prime w_2^{b_1^\prime,-1}$, then
\begin{align*}
c(p_1\otimes p_1^\prime)&=- p_1^\prime\otimes p_1, 
&c(p_2\otimes p_2^\prime)&=- p_2^\prime\otimes p_2, \\
c(p_1\otimes p_2^\prime)&= p_2^\prime\otimes p_1,
&c(p_2\otimes p_1^\prime)&=- p_1^\prime\otimes p_2. 
\end{align*}
When $b_1=b_1^\prime$, the generalized Dynkin
diagram associated to the braiding is given by Figure
\ref{figureEightM23}. By \cite{heckenberger2009classification},  
$\dim \mathfrak{B}\left(W^{b_1,-1}\oplus W^{b_1,-1}
\right)=\infty$. This finish \eqref{NicholsAlg:Tensor10}.

When $b_1=-b_1^\prime$, then $p_2=w_1^{b_1,-1}+\mi b_1 w_2^{b_1,-1}$,  
$p_1=w_1^{b_1,-1}-\mi b_1 w_2^{b_1,-1}$, 
$p_2^\prime=w_1^{b_1^\prime,-1}+\mi b_1 w_2^{b_1^\prime,-1}$,  
$p_1^\prime=w_1^{b_1^\prime,-1}-\mi b_1 w_2^{b_1^\prime,-1}$, and 
\begin{align*}
c(p_2^\prime\otimes p_2)&=- p_2\otimes p_2^\prime, 
&c(p_1^\prime\otimes p_1)&=- p_1\otimes p_1^\prime, \\
c(p_2^\prime\otimes p_1)&= p_1\otimes p_2^\prime,
&c(p_1^\prime\otimes p_2)&=- p_2\otimes p_1^\prime. 
\end{align*}
%The generalized Dynkin
%diagram associated to infinitesimal braiding $W^{b_1,-1}\oplus W^{-b_1,-1}$ is given by Figure
%\ref{figureA2TensorA2}. 
By Lemma \ref{TensorNicholsAlg}, we have
$\mathfrak{B}\left(W^{b_1,-1}\oplus W^{-b_1,-1}\right)
\simeq \mathfrak{B}\left(W^{b_1,-1}\right)
\otimes 
\mathfrak{B}\left(W^{-b_1,-1}\right)$. This finish \eqref{NicholsAlg:Tensor6}.
\end{itemize}
\end{proof}

\begin{conjecture}\label{conjecture}
$\dim\mathfrak{B}\left(M\langle(xy,x)\rangle
\oplus W^{b_1,-1}\right)=\infty
=\dim\mathfrak{B}\left(M\langle(y,xy)\rangle
\oplus W^{b_1,-1}\right)$ hold for $b_1=\pm 1$.
\end{conjecture}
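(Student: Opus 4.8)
The plan is to make the braiding fully explicit in the two Cartan-$A_2$ bases and then argue by the quantum-symmetrizer technique already used for $W_1^a,W_2^a$ in Proposition~\ref{NAlgdim1}. Set $V=M\langle(xy,x)\rangle\oplus W^{b_1,-1}$, write $p_1,p_2$ for the $A_2$ basis of $M\langle(xy,x)\rangle$ (so that $c(p_i\otimes p_i)=-p_i\otimes p_i$, $c(p_1\otimes p_2)=-p_2\otimes p_1$, $c(p_2\otimes p_1)=p_1\otimes p_2$), and $p_1'=w_1^{b_1,-1}+\mi b_1 w_2^{b_1,-1}$, $p_2'=w_1^{b_1,-1}-\mi b_1 w_2^{b_1,-1}$ for the $A_2$ basis of $W^{b_1,-1}$. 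First I would compute the four cross-braidings. Because $p_1,p_2$ are homogeneous of the grouplike degrees $xy,x$ and $x,y,xy$ act diagonally on $W^{b_1,-1}$ in the basis $p_1',p_2'$, the family $c\big(M\langle(xy,x)\rangle\otimes W^{b_1,-1}\big)$ comes out diagonal; but the coaction of $W^{b_1,-1}$ is supported on the non-grouplike $z$-part of $H_8$, so the opposite family $c\big(W^{b_1,-1}\otimes M\langle(xy,x)\rangle\big)$ interchanges the two blocks.

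The decisive structural fact is that $u_0=p_1\otimes p_1'$, $u_1=p_1'\otimes p_1$, $u_2=p_2\otimes p_2'$, $u_3=p_2'\otimes p_2$ span a $c$-invariant subspace $U\subseteq V^{\otimes 2}$ on which $c$ is the weighted $4$-cycle
$$c(u_0)=u_1,\quad c(u_1)=u_2,\quad c(u_2)=-u_3,\quad c(u_3)=-\mi b_1 u_0;$$
thus $c^4|_U=\mi b_1\,\mathrm{id}$ and $c|_U$ has an eigenvector $\xi$ whose eigenvalue $\lambda$ is a primitive $16$-th root of unity. In particular $(V,c)$ is not of diagonal type, and the only braided subspaces of $V$ are the two $A_2$ blocks, both of finite type; this is exactly why the classification of~\cite{heckenberger2009classification}, which settles every other entry of Table~\ref{dimNAlgTwoSimpleYDM_H8}, cannot be invoked here, and why the case is hard. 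The plan is therefore to mimic the $W_1^a$ argument of Proposition~\ref{NAlgdim1}: for each $n$ exhibit a nonzero element of $\mathfrak B^{2n}(V)$ by showing that a suitable degree-$2n$ tensor built from $\xi$ is an eigenvector of the full symmetrizer $\mathfrak S_{2n}$ with nonzero eigenvalue, which forces $\dim\mathfrak B(V)=\infty$.

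Concretely, using $\mathfrak S_{n,1}=\mathrm{id}+c_n\mathfrak S_{n-1,1}$ and $\mathfrak S_n=(\mathfrak S_{n-1}\otimes \mathrm{id})\mathfrak S_{n-1,1}$, I would set up the recursion for the scalar by which $\mathfrak S_{2n}$ acts on $\xi^{\otimes n}$ (and on $\xi^{\otimes n}$ followed by one extra degree-one factor, for the odd-degree bookkeeping), aiming at a closed form of geometric type in $\lambda$ in the spirit of the value $\frac{(1+\theta)[1-(-\theta^2)^n]}{1+\theta^2}$ obtained for $W_1^a$. Since $\lambda$ has order $16$, the crux is to verify that the cyclotomic numerators produced by this recursion never vanish; it is this nonvanishing for all $n$, rather than the set-up, that delivers the theorem.

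The main obstacle is that, unlike the genuinely rank-two module $W_1^a$, a single braiding $c_j$ does not preserve the small space $U$: from $c(p_1'\otimes p_1)=p_2\otimes p_2'$ together with the within-block $A_2$ braidings and the second cross-cycle $c(p_1\otimes p_2')=p_2'\otimes p_1$, $c(p_2'\otimes p_1)=p_2\otimes p_1'$, $c(p_2\otimes p_1')=p_1'\otimes p_2$, $c(p_1'\otimes p_2)=\mi b_1 p_1\otimes p_2'$, the shuffles $\mathfrak S_{n,1}$ couple several $c$-orbits simultaneously, so the eigenvalue bookkeeping is far heavier than in the rank-two case and a clean closed form may be hard to extract. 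Should the symmetrizer recursion resist closing, the fallback is the reflection (Weyl-groupoid) method for Nichols algebras of semisimple Yetter-Drinfel'd modules: regarding $V$ as a rank-two object, compute its reflection at one summand, equivalently the integers $\dim \mathrm{ad}_c(W^{b_1,-1})^k\big(M\langle(xy,x)\rangle\big)$, and show that the resulting generalized root system is infinite, again forcing $\dim\mathfrak B(V)=\infty$; the delicate point there is to check that two such simple modules satisfy the hypotheses for the reflection functors over a base that is neither commutative nor cocommutative.
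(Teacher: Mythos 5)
You should first be aware of the status of this statement in the paper: it is not proved there. It is stated as Conjecture \ref{conjecture} and deliberately left open; Theorem \ref{NicholsAlg:maintheorem} is formulated conditionally on it, and the corresponding entry of Table \ref{dimNAlgTwoSimpleYDM_H8} is marked with a question mark. The only material the paper offers is the remark immediately following the conjecture, which records the cross-braidings. Your computations agree with that remark exactly: the two $A_2$ blocks, the family $c\left(M\langle(xy,x)\rangle\otimes W^{b_1,-1}\right)$ acting by flips with scalars, the block-interchanging family in the opposite order, and the $c$-invariant subspace $U$ spanned by $p_1\otimes p_1'$, $p_1'\otimes p_1$, $p_2\otimes p_2'$, $p_2'\otimes p_2$ on which $c$ is a weighted $4$-cycle with $c^4|_U=\mi b_1\,\mathrm{id}$, so that the eigenvalues of $c|_U$ are primitive $16$-th roots of unity. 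This is correct and is a genuine, if small, observation beyond what the paper writes down.

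However, what you have written is a plan, not a proof, and the gap sits exactly where you yourself flag it. In the $W_1^a$ case of Proposition \ref{NAlgdim1}, the symmetrizer argument works because the candidate vectors $\left(w_1^{(1)}\otimes w_2^{(1)}\right)^{\otimes n}$ span lines preserved by every braiding $c_j$, so $\mathfrak{S}_{2n}$ acts on them by an explicitly computable scalar. Here, as you concede, no single $c_j$ preserves the span of $\xi^{\otimes n}$ (already $c(p_1'\otimes p_1)=p_2\otimes p_2'$ throws you out of any one orbit), so $\mathfrak{S}_{2n}$ does not visibly act by a scalar on any candidate vector; no recursion for an eigenvalue is ever set up, let alone a proof that it is nonzero for all $n$. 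The assertion that ``the cyclotomic numerators never vanish'' is the conjecture restated, not an argument for it. The same holds for your fallback: you compute no reflection and no $\dim \mathrm{ad}_c\left(W^{b_1,-1}\right)^k\left(M\langle(xy,x)\rangle\right)$, and the applicability of the Weyl-groupoid machinery to this pair is asserted rather than verified. A secondary inaccuracy: the claim that $(V,c)$ is not of diagonal type does not follow from the eigenvalue computation alone, since the eigenvalues on $U$ occur in the two pairs $\pm\lambda$, $\pm\mi\lambda$, which is exactly the pattern a diagonal braiding produces on off-diagonal two-dimensional invariant subspaces; ruling out diagonal type needs a separate argument. In short, your set-up is sound and consistent with the paper's own remark, but the statement is exactly as open at the end of your text as it is in the paper.
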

\begin{remark}
Let $p_1=(v_1+v_2)\boxtimes xy$,  $p_2=(v_1-v_2)\boxtimes x$
$\in M\langle(xy,x)\rangle$, then 
\begin{align*}
c\left(p_1\otimes w_1^{b_1,-1}\right)&=w_1^{b_1,-1}\otimes p_1, 
&c\left(p_1\otimes w_2^{b_1,-1}\right)&=w_2^{b_1,-1}\otimes p_1,\\
c\left(p_2\otimes w_1^{b_1,-1}\right)&=\mi b_1 w_2^{b_1,-1}\otimes p_2, 
&c\left(p_2\otimes w_2^{b_1,-1}\right)&=-\mi b_1w_1^{b_1,-1}\otimes p_2,\\
c\left(w_1^{b_1,-1}\otimes p_1\right)&=p_2\otimes w_1^{b_1,-1}, 
&c\left(w_1^{b_1,-1}\otimes p_2\right)&=p_1\otimes w_2^{b_1,-1},\\
c\left(w_2^{b_1,-1}\otimes p_1\right)&=-p_2\otimes w_2^{b_1,-1}, 
&c\left(w_2^{b_1,-1}\otimes p_2\right)&=p_1\otimes w_1^{b_1,-1}.
\end{align*}
Let 
$p_1^\prime=w_1^{b_1,-1}+\mi b_1w_2^{b_1,-1}$ and 
$p_2^\prime=w_1^{b_1,-1}-\mi b_1w_2^{b_1,-1}$, then
\begin{align*}
c(p_1\otimes p_1^\prime)&=p_1^\prime\otimes p_1,
&c(p_1\otimes p_2^\prime)&=p_2^\prime\otimes p_1,\\
c(p_2\otimes p_1^\prime)&=p_1^\prime\otimes p_2,
&c(p_2\otimes p_2^\prime)&=-p_2^\prime\otimes p_2,\\
c(p_1^\prime\otimes p_1)&=p_2\otimes p_2^\prime,
&c(p_1^\prime\otimes p_2)&=\mi b_1 p_1\otimes p_2^\prime,\\
c(p_2^\prime\otimes p_1)&=p_2\otimes p_1^\prime,
&c(p_2^\prime\otimes p_2)&=-\mi b_1 p_1\otimes p_1^\prime.
\end{align*}
\end{remark}

 \subsection*{Proof of Theorem \ref{NicholsAlg:maintheorem}}
Firstly, we recall the truth that  for any submodule $
 M_1\subset M_2\in{}_{H}^{H}\mathcal{YD}$, $\mathfrak{B}(M_1)
 \subset \mathfrak{B}(M_2)$, $\dim \mathfrak{B}(M_2)=\infty$ if $\dim \mathfrak{B}(M_1)=\infty$. So the only possible $M\in{}_{H_8}^{H_8}\mathcal{YD}$ such that $\dim\mathfrak{B}(M)< \infty$ is in the list of Theorem \ref{NicholsAlg:maintheorem} according to 
Table  \ref{dimNAlgSimpleYDM_H8},  Table \ref{dimNAlgTwoSimpleYDM_H8}, 
   Proposition \ref{NAlgdim1} and 
   Proposition \ref{NicholsAlg:TensorOfTwoSimpleObjects}, under the assumption that Conjecture \ref{conjecture} is true. Now we only need to check that Nichols algebras $ \mathfrak{B}(M)$ for $M$ listed in Theorem \ref{NicholsAlg:maintheorem} is finite dimensional. In fact,  $\Omega_1(n_1,n_2,n_3,n_4)$ is of Cartan type $\underbrace{A_1\times \cdots \times A_1}_{n_1+n_2+n_3+n_4}$;
  $\Omega_k(n_1,n_2)$ for $k=2,3,4,5$ is of Cartan type 
  $\underbrace{A_1\times \cdots \times A_1}_{n_1+n_2}\times\, A_2$; 
 $\Omega_k$ for $k=6,7$ is of Cartan type $A_2\times A_2$.

\section{Hopf algebras over $H_8$}
\label{section:HopfAlgebras}
In this section, according to lifting method, we determine  finite-dimensional Hopf algebra $H$ with coradical $H_8$ such that its infinitesimal braiding is isomorphic to a Yetter-Drinfel'd module $M$ listed in Theorem \ref{NicholsAlg:maintheorem}. We begin by proving $H$ is generated by elements of degree one in Theorem \ref{generatedByDegreeOne}. That is, $\mathrm{gr}\, H\simeq \mathfrak{B}(M)\#H_8$.

\begin{theorem}\label{generatedByDegreeOne}
Let $H$ be a finite-dimensional Hopf algebra over $H_8$ such that its infinitesimal braiding is isomorphic to a Yetter-Drinfel'd module  over $H_8$ which is in the list of Theorem \ref{NicholsAlg:maintheorem}. Then the diagram of $H$ is a Nichols algebra, and consequently $H$ is generated by the elements of degree one with respect to the coradical filtration.
\end{theorem}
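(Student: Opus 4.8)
The plan is to prove the equivalent statement that the diagram $R$ of $H$ coincides with the Nichols algebra of its first homogeneous component. Since $H_8$ is cosemisimple and equals the coradical of $H$, the associated graded Hopf algebra splits as $\mathrm{gr}\,H\simeq R\#H_8$, where $R=\bigoplus_{n\ge 0}R(n)$ is a \emph{coradically graded} braided Hopf algebra in ${}_{H_8}^{H_8}\mathcal{YD}$ with $R(0)=\mathbb{K}$ and $R(1)=V$ the infinitesimal braiding. The subalgebra of $R$ generated by $R(1)$ is precisely $\mathfrak{B}(V)$: it is generated in degree one, and being a graded braided Hopf subalgebra of the coradically graded $R$ its primitives are concentrated in degree one, which characterises the Nichols algebra. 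Thus the theorem amounts to the equality $R=\mathfrak{B}(V)$, i.e.\ to the assertion that $R$ is generated in degree one. Granting this, $\mathrm{gr}\,H=\mathfrak{B}(V)\#H_8$ is generated in degree one, and then $H$ itself is generated by $H_1$ with respect to the coradical filtration by the standard transfer of generation from $\mathrm{gr}\,H$ to $H$ \cite{andruskiewitsch2001pointed,MR1659895}.

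First I would pass to the graded dual. Because $R$ is finite dimensional and coradically graded, its graded dual $R^{d}$ is a finite-dimensional graded braided Hopf algebra that is \emph{generated in degree one}, with $R^{d}(1)=V^{*}$. Moreover $R$ is generated in degree one if and only if $R^{d}$ is coradically graded, that is, if and only if $P(R^{d})=R^{d}(1)$, since the primitives of $R^{d}$ are dual to the indecomposables $R^{+}/(R^{+})^{2}$ of $R$. Hence it suffices to show that $R^{d}$ has no primitive elements in degrees $\ge 2$. Now $R^{d}$ is a finite-dimensional \emph{pre-Nichols algebra} of $W:=V^{*}$: it is generated in degree one and surjects onto $\mathfrak{B}(W)$. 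The dual of a diagonal braiding is again of diagonal type of the same Cartan type, and $\dim\mathfrak{B}(W)=\dim\mathfrak{B}(V)<\infty$, so the problem is reduced to the purely braided statement that a finite-dimensional pre-Nichols algebra of a diagonal braided vector space of finite Cartan type equals its Nichols algebra.

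Next I would invoke the structure theory of diagonal type. By Theorem \ref{NicholsAlg:maintheorem} together with Proposition \ref{NAlgdim1} and Proposition \ref{NicholsAlg:TensorOfTwoSimpleObjects}, every admissible $V$, hence every $W$, is of diagonal type of Cartan type $A_1$, $A_2$, or a product of such blocks. For these the defining ideal of $\mathfrak{B}(W)$ in $T(W)$ is generated by the quantum Serre relations together with the power relations on the root generators, and by the generation and defining-relations results for finite-dimensional Nichols algebras of diagonal type \cite{andruskiewitsch2005classification,heckenberger2009classification} the Nichols algebra is the unique finite-dimensional pre-Nichols algebra in each case. Consequently $R^{d}=\mathfrak{B}(W)$, so $P(R^{d})=R^{d}(1)$ and therefore $R=\mathfrak{B}(V)$, as required.

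The hard part will be that the diagram lives in ${}_{H_8}^{H_8}\mathcal{YD}$ rather than over an abelian group, so the generation theorems cannot be quoted verbatim: by the Remark following Theorem \ref{NicholsAlg:maintheorem} only $\Omega_1(n_1,n_2,n_3,n_4)$ admits a principal realization. The point to be justified carefully is that generation in degree one, and the analysis of primitives in the dual, depend only on the underlying braided vector space $(V,c)$, which here is diagonal; once this is made precise the braided reformulation above applies uniformly to the non-principal cases $M\langle(xy,x)\rangle$, $M\langle(y,xy)\rangle$ and $W^{b_1,-1}$ as well. The genuinely delicate situation is the Cartan type $A_2$ blocks at $q=-1$, where $q_{ii}=-1$ makes every root a Cartan root: there one must check that the finite-dimensionality of $H$ forces the root-vector power relations to hold in $R^{d}$, ruling out the larger distinguished pre-Nichols algebra. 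This is exactly the step where the hypothesis $\dim H<\infty$ is indispensable, since dropping any such power relation would produce a non-nilpotent root generator and hence an infinite-dimensional pre-Nichols cover.
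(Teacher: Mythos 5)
Your skeleton coincides with the paper's: both arguments pass to the graded dual $\mathcal{J}=R^{d}$ of the diagram, use the duality lemma of \cite[Lemma 5.5]{MR1780094} to convert the claim into ``the finite-dimensional graded braided Hopf algebra $\mathcal{J}$, generated in degree one, equals the Nichols algebra of $\mathcal{J}(1)$'', and then use $\dim H<\infty$ to force the defining relations to hold. Your observation that this last question depends only on the underlying diagonal braided vector space, not on the Yetter--Drinfel'd realization over $H_8$, is correct and is a genuine (if implicit in the paper) simplification. The divergence, and the gap, is in how the relations are actually killed.

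The step ``by the generation and defining-relations results for finite-dimensional Nichols algebras of diagonal type \cite{andruskiewitsch2005classification,heckenberger2009classification} the Nichols algebra is the unique finite-dimensional pre-Nichols algebra'' is not supported by those references. The generation theorem of \cite{andruskiewitsch2005classification} is proved under the hypothesis that the diagonal parameters $q_{ii}$ have odd order (the group there has order coprime to $210$), whereas \emph{every} braiding occurring in Theorem \ref{NicholsAlg:maintheorem} has $q_{ii}=-1$; so that theorem covers none of the cases at hand, including the $A_1\times\cdots\times A_1$ ones you implicitly treat as unproblematic, not just the $A_2$ blocks you flag. And \cite{heckenberger2009classification} is a classification of arithmetic root systems; it contains no generation or pre-Nichols-uniqueness statement. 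A result that does give the uniform statement you want is Angiono's generation theorem for diagonal type (I.~Angiono, \emph{On Nichols algebras of diagonal type}, J. Reine Angew. Math. \textbf{683} (2013)), which is not in this paper's bibliography. Without it, your proof must fall back on the mechanism you only sketch for the $A_2$ blocks, and it must be run for \emph{every} defining relation of \emph{every} $\Omega_i$: each relation $r$ --- the squares, the quartic $(p_1p_2)^2+(p_2p_1)^2$, and the cross-commutators between blocks --- is primitive in any pre-Nichols quotient and satisfies $c(r\otimes r)=r\otimes r$ because its degree pairs trivially with itself, so $r\neq 0$ would generate a polynomial subalgebra, contradicting finite dimensionality. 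Verifying primitivity and the trivial self-braiding relation by relation is precisely the paper's proof of Theorem \ref{generatedByDegreeOne} (carried out explicitly for $\Omega_1$, $\Omega_6$ and $\Omega_4(n_1,n_2)$, with the remaining cases left to the reader). In short, what you call the ``genuinely delicate situation'' is not a corner case to be patched at the end: once the inapplicable citations are removed, it is the entire content of the theorem.
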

\begin{proof}
Since $\mathrm{gr}\, H \simeq  R \#H_8$, with $R=\bigoplus_{n\geq 0} R(n)$ the diagram of $H$, we need
to prove that $R$ is a Nichols algebra.
Let $\mathcal{J}=\bigoplus_{n\geq 0} R(n)^*$ be the graded dual of $R$, then $\mathcal{J}$ is a graded Hopf algebra in ${}_{H_8}
^{H_8}\mathcal{YD}$ with $\mathcal{J}(0)=\mathbb{K}1$. 
According to \cite[Lemma 5.5]{MR1780094},   
$R(1)= \mathcal{P}(R)$ if and only if $\mathcal{J}$ is generated as an algebra by $\mathcal{J}(1)$, that is, if $\mathcal{J}$ is itself a Nichols algebra.

Considering $\mathfrak{B}(M)\in {}_{H_8}
^{H_8}\mathcal{YD}$ for  $M$  in the list of Theorem \ref{NicholsAlg:maintheorem}, since $\mathfrak{B}(M)=T(M)/\mathcal{I}$, in order to show $ \mathcal{P}(\mathcal{J})=\mathcal{J}(1)$,  it is enough to prove that the relations that generate the ideal
$\mathcal{I}$ hold in $\mathcal{J}$. This can be done by a case by case computation. We perform here three cases,
and leave  the rest to  the reader.

Suppose $M=\Omega_1(n_1,n_2,n_3,n_4)$. A direct computation shows that the elements $r$ in $\mathcal{J}$ representing
the quadratic relations are primitive and since the braiding is -flips, they
satisfy that $c(r\otimes r)=r\otimes r$. As $\dim \mathcal{J} <\infty$, it must be $r=0$ in $\mathcal{J}$ and hence there
exists a projective algebra map 
$\mathfrak{B}(M)$
$\rightarrow\mathcal{J}$, 
which implies that
$ \mathcal{P}(\mathcal{J})=\mathcal{J}(1)$.

Suppose $M=\Omega_6$, then  $M$ is generated by elements $p_1=(v_1+v_2)\boxtimes xy$, $p_2=(v_1-v_2)\boxtimes x$, $p_1^\prime=(v_1+v_2)\boxtimes y$, $p_2^\prime=(v_1-v_2)\boxtimes xy$ and the ideal defining the Nichols algebra is generated by the elements $p_1^2$, $p_2^2$, ${p_1^\prime}^2$, ${p_2^\prime}^2$, $p_1p_2p_1p_2+p_2p_1p_2p_1$, $p_1^\prime p_2^\prime p_1^\prime p_2^\prime+p_2^\prime p_1^\prime p_2^\prime p_1^\prime$, $p_1p_1^\prime+p_1^\prime p_1$, $p_1p_2^\prime+p_2^\prime p_1$, $p_2p_1^\prime-p_1^\prime p_2$, $p_2p_2^\prime+p_2^\prime p_2$.
% $\mathfrak{B}(M)\simeq \mathbb{K}\left<p_1,p_2\right.$, $\left. p_1^\prime,p_2^\prime\big| p_1^2=p_2^2={p_1^\prime}^2={p_2^\prime}^2=0=p_1p_2p_1p_2+p_2p_1p_2p_1=p_1^\prime p_2^\prime p_1^\prime p_2^\prime+p_2^\prime p_1^\prime p_2^\prime p_1^\prime\right>$ 
 We can check directly that all those generators of the defining ideal of $\mathfrak{B}(M)$ are  primitive elements, or by \cite[Theorem 6]{MR2842083}. It's enough to show $c(r\otimes r)=r\otimes r$ for all 
 generators given in above for the defining ideal. Since
 $\rho(p_1)=xy\otimes p_1$,  $\rho(p_2)=x\otimes p_2$, 
 $\rho(p_1^\prime)=y\otimes p_1^\prime$, so $\rho(p_1^2)=1\otimes p_1^2$,  $\rho(p_1p_2p_1p_2+p_2p_1p_2p_1)=1\otimes (p_1p_2p_1p_2+p_2p_1p_2p_1)$, 
 $\rho(p_1p_1^\prime+p_1^\prime p_1)=x\otimes (p_1p_1^\prime+p_1^\prime p_1)$. It's easy to see $c(r\otimes r)=r\otimes r$ holds for $r=p_1^2$, $p_1p_2p_1p_2+p_2p_1p_2p_1$ and $p_1p_1^\prime+p_1^\prime p_1$. We leave the rest to the reader.

Suppose $M=\Omega_4(n_1,n_2)$,  then  
$M$ is generated by elements $p_1=w_1^{1 ,-1}+\mi  w_2^{1,-1}$, $p_2=w_1^{1 ,-1}-\mi  w_2^{1 ,-1}$, $\{X_j\}_{j=1,\cdots, n_1}$, $\{Y_k\}_{k=1,\cdots, n_2}$ with $\mathbb{K}X_j\simeq M\langle\mi,x\rangle$, $\mathbb{K}Y_k\simeq M\langle\mi,y\rangle$ and the ideal defining the Nichols algebra is generated by the elements $p_1^2$, $p_2^2$, $p_1p_2p_1p_2+p_2p_1p_2p_1$, $X_j^2$, $\{X_{j_1}X_{j_2}+
X_{j_2}X_{j_1}\}_{1\leq j_1< j_2\leq n_1}$, $Y_k^2$, $\{Y_{k_1}Y_{k_2}+Y_{k_2}Y_{k_1}\}_{1\leq k_1< k_2\leq n_2}$, $p_1Y_k-Y_k p_1$, $p_2Y_k+Y_k p_2$, $p_1X_j-X_jp_1$, $p_2X_j+X_jp_2$. We can check directly that all those generators of the defining ideal of $\mathfrak{B}(M)$ are  primitive elements, or by \cite[Theorem 6]{MR2842083}. It's enough to show $c(r\otimes r)=r\otimes r$ for all 
 generators given in above for the defining ideal. Since
 $\rho(p_1)=(f_{00}-\mi f_{11})z\otimes p_1+
(f_{10}+\mi  f_{01})z\otimes p_2$,
$\rho(p_2)=(f_{00}+\mi  f_{11})z\otimes p_2+
(f_{10}-\mi  f_{01})z\otimes p_1$,
 $\rho(X_j)=x\otimes X_j$,
 \begin{align*}
 \rho(p_1p_2p_1p_2+p_2p_1p_2p_1)
 &=[(f_{00}-\mi  f_{11})z(f_{00}+\mi  f_{11})z]^2\otimes p_1p_2p_1p_2+\\
 &\quad+[(f_{00}+\mi  f_{11})z(f_{00}-\mi  f_{11})z]^2\otimes 
 p_2p_1p_2p_1+\\
 &\quad +[(f_{10}+\mi  f_{01})z(f_{10}-\mi  f_{01})z]^2\otimes 
 p_2p_1p_2p_1+\\
 &\quad +[(f_{10}-\mi  f_{01})z(f_{10}+\mi  f_{01})z]^2\otimes 
 p_1p_2p_1p_2\\
 &=xy\otimes (p_1p_2p_1p_2+p_2p_1p_2p_1), \\
 \rho(p_1X_j-X_jp_1)=(f_{00}+\mi  &f_{11})z\otimes 
 (p_1X_j-X_jp_1)+(f_{10}-\mi  f_{01})z\otimes (p_2X_j+X_jp_2).
 \end{align*}
 Because 
 \begin{align*}
 (f_{10}-\mi f_{01})z\cdot (p_1X_j-X_jp_1)
 &=\frac{f_{10}-\mi f_{01}}{2}\cdot \left[((1+y)z\cdot p_1)(z\cdot X_j)-((1-y)z\cdot X_j)(xz\cdot p_1)\right]\\
 &=(-\mi)(f_{10}-\mi f_{01})\cdot (p_1X_j-X_jp_1)=0,\\
 (f_{00}+\mi f_{11})z\cdot (p_1X_j-X_jp_1)
 &=(-\mi)(f_{00}+\mi f_{11})\cdot (p_1X_j-X_jp_1)=p_1X_j-X_jp_1,\\
 xy\cdot (p_1p_2p_1p_2+p_2p_1p_2p_1)&=p_1p_2p_1p_2+p_2p_1p_2p_1,
 \end{align*}
 $c(r\otimes r)=r\otimes r$ holds for $r=p_1p_2p_1p_2+p_2p_1p_2p_1$ and $p_1X_j-X_jp_1$. We leave the rest to the reader.
\end{proof}

\begin{lemma}\cite[Lemma 6.1]{MR1780094}
Let $H$ be a Hopf algebra, $\psi: H\rightarrow H$ an automorphism of Hopf algebras, $V$, $W$ Yetter-Drinfel'd modules over $H$.
\begin{enumerate}
\item Let $V^\psi$ be the same space underlying $V$ but with action and coaction 
$$h\cdot_\psi v=\psi(h)\cdot v,\quad \rho^\psi (v)
=\left(\psi^{-1}\otimes \mathrm{id}\right)\rho(v), \quad h\in H, v\in V.$$ 
Then $V^\psi$ is also a Yetter-Drinfel'd module over $H$.
If $T: V\rightarrow W$ is a morphism in ${}_H^H\mathcal{YD}$, then 
$T^\psi: V^\psi\rightarrow W^\psi$ also is. Moreover, the braiding 
$c: V^\psi\otimes  W^\psi\rightarrow    W^\psi \otimes V^\psi$ 
coincides with the braiding $c: V\otimes W\rightarrow W\otimes V$.
\item If $R$ is an algebra (resp., a coalgebra, a Hopf algebra) in 
${}_H^H\mathcal{YD}$, then $R^\psi$ also is, with the same structural maps.
\item Let $R$ be a Hopf algebra in ${}_H^H\mathcal{YD}$. Then the map $\Psi: R^\psi\# H\rightarrow R\#H$ given by $\Psi(r\#h)=r\#\psi(h)$ is an isomorphism of Hopf algebras. 
\end{enumerate}
\end{lemma}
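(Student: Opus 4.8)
The plan is to verify the three statements directly from the defining axioms, using repeatedly the standard facts about a Hopf algebra automorphism $\psi$: its inverse $\psi^{-1}$ is again a Hopf algebra automorphism; both $\psi$ and $\psi^{-1}$ are coalgebra maps, so that $\Delta\psi=(\psi\otimes\psi)\Delta$ and $\varepsilon\psi=\varepsilon$; and $\psi$ commutes with the antipode, $\psi S=S\psi$. Throughout I write $\rho(v)=v_{(-1)}\otimes v_{(0)}$, so that $\rho^\psi(v)=\psi^{-1}(v_{(-1)})\otimes v_{(0)}$.

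For part (1), that $(V,\cdot_\psi)$ is a left $H$-module is immediate from $\psi$ being multiplicative and unital, and that $(V,\rho^\psi)$ is a left $H$-comodule follows because $\psi^{-1}$ is a coalgebra map, so coassociativity and counitality of $\rho^\psi=(\psi^{-1}\otimes\mathrm{id})\rho$ are inherited from those of $\rho$. The heart of the matter is the Yetter-Drinfel'd compatibility. I would compute $\rho^\psi(h\cdot_\psi v)=(\psi^{-1}\otimes\mathrm{id})\rho(\psi(h)\cdot v)$, apply the original compatibility condition to $\rho(\psi(h)\cdot v)$, then replace $\psi(h)_{(i)}$ by $\psi(h_{(i)})$ (coalgebra map) and $S(\psi(h_{(3)}))$ by $\psi(S(h_{(3)}))$ (since $\psi S=S\psi$). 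Applying $\psi^{-1}$ to the left leg and using that it is an algebra map collapses each factor $\psi^{-1}\psi$ coming from $h$ to the identity, leaving exactly the compatibility condition $h_{(1)}\psi^{-1}(v_{(-1)})S(h_{(3)})\otimes\psi(h_{(2)})\cdot v_{(0)}$ for $(\cdot_\psi,\rho^\psi)$. That a morphism $T$ remains a morphism is clear since $T^\psi=T$ as a linear map, and the braidings coincide by the one-line check $\psi^{-1}(v_{(-1)})\cdot_\psi w=\psi\big(\psi^{-1}(v_{(-1)})\big)\cdot w=v_{(-1)}\cdot w$.

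For part (2), the observation is that the structural maps of $R$ are, by hypothesis, morphisms in ${}_H^H\mathcal{YD}$; by part (1) the same linear maps are morphisms for the twisted structures, and the braiding is unchanged. Since every axiom defining an algebra, coalgebra or Hopf algebra in a braided category is a relation among these structural maps and the braiding $c$ alone, and neither the maps nor $c$ have changed, all axioms transport verbatim, so $R^\psi$ carries the same structural maps. For part (3), $\Psi$ is a linear bijection because $\psi$ is, so it suffices to show it is a bialgebra map; the antipode is then preserved automatically. For the product I would expand $\Psi\big((a\#_\psi h)(a'\#_\psi h')\big)$, noting that the multiplication on $R^\psi\#H$ uses $\cdot_\psi$, so the term $\psi(h_{(1)})\cdot a'$ that appears is precisely what the untwisted product $(a\#\psi(h))(a'\#\psi(h'))$ produces once $\psi(h)_{(1)}=\psi(h_{(1)})$ is used. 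Dually, the twisted coaction contributes $\psi^{-1}((a_{(2)})_{(-1)})$ to the coproduct on $R^\psi\#H$, and this is exactly cancelled by the $\psi$ that $\Psi$ applies to the middle leg, matching the coproduct of $a\#\psi(h)$; unit and counit are trivial.

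I expect the only genuinely delicate point to be the compatibility computation in part (1): one must keep careful track of which occurrences of $\psi$ stem from the twisted action and which from the twisted coaction, and confirm that they cancel in matched pairs. This cancellation is exactly what forces $\psi$ to be simultaneously an algebra map, a coalgebra map, and antipode-compatible; everything else reduces to routine bookkeeping in Sweedler notation.
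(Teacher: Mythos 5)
Your proof is correct. There is, however, nothing in the paper to compare it against: the lemma is stated as a quotation of \cite[Lemma 6.1]{MR1780094} and the paper supplies no proof of its own, so your direct Sweedler-notation verification — untwisting each $\psi$ against a $\psi^{-1}$ in the Yetter--Drinfel'd compatibility, observing that the braiding is unchanged, and checking that $\Psi$ is a bialgebra map — is exactly the routine argument the citation delegates. The one step you should make explicit is the identification $(V\otimes W)^{\psi}=V^{\psi}\otimes W^{\psi}$ as Yetter--Drinfel'd modules (immediate since $\psi$ is a coalgebra map and $\psi^{-1}$ an algebra map): it is what allows part (1) to convert the structural maps $m_R\colon R\otimes R\to R$ and $\Delta_R\colon R\to R\otimes R$ into morphisms $R^{\psi}\otimes R^{\psi}\to R^{\psi}$ and $R^{\psi}\to R^{\psi}\otimes R^{\psi}$ in part (2), and it is used again implicitly when matching the smash-product comultiplications in part (3).
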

\begin{corollary}\label{Isomorphism:B(V)H_8}
\begin{enumerate}
\item $\left[M\langle b\mi, x\rangle\right]^{\tau_3}\simeq 
M\langle -b\mi, y\rangle$, $b=\pm 1$.
\item $\left[M\langle(xy, x)\rangle\right]^{\tau_3}\simeq 
M\langle(y, xy)\rangle$, $\left(W^{b_1,-1}\right)^{\tau_3}\simeq 
W^{-b_1,-1}$ with $b_1=\pm 1$.
\item $\mathfrak{B}\left(\Omega_2(n_1,n_2)\right)\#H_8\simeq 
\mathfrak{B}\left(\Omega_3(n_2,n_1)\right)\#H_8$,  
$\mathfrak{B}\left(\Omega_4(n_1,n_2)\right)\#H_8\simeq 
\mathfrak{B}\left(\Omega_5(n_2,n_1)\right)\#H_8$. 
\end{enumerate}
\end{corollary}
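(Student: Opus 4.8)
The plan is to deduce everything from the preceding Lemma, whose key outputs are that twisting a Yetter-Drinfel'd module by a Hopf automorphism $\psi$ leaves the braiding unchanged, that $R^\psi$ is again a braided Hopf algebra with the same structural maps, and that $R^\psi\#H\simeq R\#H$ via $r\#h\mapsto r\#\psi(h)$. Throughout I would use that $\tau_3$ lies in the Klein four-group of automorphisms of $H_8$, so $\tau_3^2=\mathrm{id}$ and hence $\tau_3^{-1}=\tau_3$; this simplifies both the twisted action $h\cdot_{\tau_3}m=\tau_3(h)\cdot m$ and the twisted coaction $\rho^{\tau_3}(m)=(\tau_3\otimes\mathrm{id})\rho(m)$. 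The heart of parts (1) and (2) is a direct verification that these twisted structures coincide, possibly after a change of basis, with the asserted target modules.

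For part (1), I would take $M\langle b\mi,x\rangle=\mathbb{K}p$ with its explicit data ($x,y$ acting by $(b\mi)^2=-1$, $z$ by $b\mi$, and $\rho(p)=x\otimes p$). The only nontrivial point is the computation $z\cdot_{\tau_3}p=\tau_3(z)\cdot p=\tfrac12(z+xz+yz-xyz)\cdot p$, which I would evaluate by pushing each monomial through the module action; it collapses to $-b\mi\,p$. Since $\rho^{\tau_3}(p)=\tau_3(x)\otimes p=y\otimes p$, comparison with the defining data of $M\langle -b\mi,y\rangle$ gives the isomorphism on the same generator, with no change of basis.

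For part (2) the same recipe applies to the two-dimensional modules of Lemmas \ref{YDM2dim1} and \ref{YDM2dim2}. I would assemble the matrix of $\tau_3(z)=\tfrac12(z+xz+yz-xyz)$ from the given matrices of $x,y,z$, and apply $\tau_3$ to the group-algebra coefficients in the coactions. The one wrinkle is that the twisted structure matches the target only after swapping the two basis vectors; this same swap interchanges the two grouplike labels, so that, for instance, the coaction data $(xy,x)$ turns into $(y,xy)$, identifying $[M\langle(xy,x)\rangle]^{\tau_3}$ with $M\langle(y,xy)\rangle$ and $(W^{b_1,-1})^{\tau_3}$ with $W^{-b_1,-1}$. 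I expect the $W^{b_1,-1}$ case to be the main computational obstacle, since its coaction uses the non-grouplike coefficients $\tfrac{(1\pm y)z}{2}$ and $\tfrac{x(1\pm y)z}{2}$, so applying $\tau_3$ forces one to expand $\tau_3(z)$ inside those coefficients and simplify carefully.

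Finally, part (3) is formal once (1) and (2) are available. The twist is additive over direct sums, and $\mathfrak{B}(V^{\tau_3})=\mathfrak{B}(V)^{\tau_3}$ because the braiding is preserved and the Nichols algebra is determined by the braiding. Combining the isomorphisms of (1)--(2) (noting $[M\langle\mi,x\rangle]^{\tau_3}\simeq M\langle-\mi,y\rangle$ and $[M\langle-\mi,x\rangle]^{\tau_3}\simeq M\langle\mi,y\rangle$, and the analogous statements for the $y$-modules obtained by re-applying $\tau_3$) yields $[\Omega_2(n_1,n_2)]^{\tau_3}\simeq\Omega_3(n_2,n_1)$ and $[\Omega_4(n_1,n_2)]^{\tau_3}\simeq\Omega_5(n_2,n_1)$, the indices swapping precisely because $\tau_3$ exchanges the two one-dimensional families. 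Part (3) of the Lemma then gives $\mathfrak{B}(\Omega_3(n_2,n_1))\#H_8=\mathfrak{B}(\Omega_2(n_1,n_2))^{\tau_3}\#H_8\simeq\mathfrak{B}(\Omega_2(n_1,n_2))\#H_8$, and likewise for $\Omega_4,\Omega_5$, completing the proof.
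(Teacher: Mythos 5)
Your proposal is correct and is essentially the argument the paper intends: the Corollary is stated without proof as a direct consequence of the quoted Lemma (\cite[Lemma 6.1]{MR1780094}), and your verification---twisting by the involution $\tau_3$, computing $\tau_3(z)\cdot p=-b\mi\,p$, matching the twisted data to the target modules after a change of basis, and then invoking additivity over direct sums, $\mathfrak{B}(V^{\tau_3})\simeq\mathfrak{B}(V)^{\tau_3}$, and part (3) of the Lemma for the bosonizations---is exactly that intended application. The index swap $(n_1,n_2)\mapsto(n_2,n_1)$ and the use of $\tau_3^{-1}=\tau_3$ for the $y$-modules are also handled correctly.
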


\begin{definition}\label{definition:HopfAlgA_1}
For $n_1, n_2, n_3, n_4\in \mathbb{N}^{\geq 0}$ with $n_1+ n_2+ n_3+ n_4\geq 1$, and a set $I_1$ of parameters $\lambda_{j,s}$, $\mu_{j,t}$,  $\zeta_{k,s}$, $\theta_{k,t}$
in $\mathbb{K}$ with $j=1,\cdots, n_1$, $k=1,\cdots, n_2$, 
$s=1,\cdots, n_3$, $t=1,\cdots, n_4$, denote by $\mathfrak{A}_1(n_1,n_2,n_3,n_4;I_1)$ the algebra generated by 
$x$, $y$, $z$,  $\{X_j\}_{j=1,\cdots, n_1}$, $\{Y_k\}_{k=1,\cdots, n_2}$, $\{p_s\}_{s=1,\cdots, n_3}$, $\{q_t\}_{t=1,\cdots, n_4}$ satisfying  the following relations:
\begin{align}
x^2=y^2=1,\quad z^2=\frac{1}{2}(1+x+y-xy), \\
xy=yx,\quad zx=yz,\quad zy=xz,\\
 xX_j=-X_jx,\quad yX_j=-X_jy,\quad zX_j=\mi X_jxz,\\
 xY_k=-Y_kx,\quad yY_k=-Y_ky,\quad zY_k=-\mi Y_kxz,\\
 xp_s=-p_sx,\quad yp_s=-p_sy,\quad zp_s=\mi p_sxz,\\
xq_t=-q_tx,\quad yq_t=-q_ty,\quad zq_t=-\mi q_txz,\\
X_j^2=0,\quad Y_k^2=0,\quad p_s^2=0,\quad q_t^2=0,\label{ideal:directsumOneDim1}\\
X_{j_1}X_{j_2}+X_{j_2}X_{j_1}=0,\quad j_1, j_2\in\{1,\cdots, n_1\},\label{ideal:directsumOneDim2}\\
Y_{k_1}Y_{k_2}+Y_{k_2}Y_{k_1}=0,\quad k_1, k_2\in\{1,\cdots, n_2\},
\label{ideal:directsumOneDim3}\\
p_{s_1}p_{s_2}+p_{s_2}p_{s_1}=0,\quad s_1,s_2\in\{1,\cdots, n_3\},
\label{ideal:directsumOneDim4}\\
q_{t_1}q_{t_2}+q_{t_2}q_{t_1}=0,\quad t_1,t_2\in\{1,\cdots, n_4\},
\label{ideal:directsumOneDim5}\\
X_jY_k+Y_kX_j=0,\quad X_jp_s+p_sX_j=\lambda_{j,s}(1-xy),
\label{ideal:directsumOneDim6}\\
X_jq_t+q_tX_j=\mu_{j,t}(1-xy),\quad
Y_kp_s+p_sY_k=\zeta_{k,s}(1-xy),
\label{ideal:directsumOneDim7}\\
Y_kq_t+q_tY_k=\theta_{k,t}(1-xy),\quad p_sq_t+q_tp_s=0.
\label{ideal:directsumOneDim8}
\end{align}
It is a Hopf algebra with its structure determined by 
\begin{align}
\Delta(X_j)=X_j\otimes 1+x\otimes X_j,\quad 
\Delta(Y_k)=Y_k\otimes 1+x\otimes Y_k,\\
\Delta(p_s)=p_s\otimes 1+y\otimes p_s,\quad 
\Delta(q_t)=q_t\otimes 1+y\otimes q_t,\\
\Delta(x)=x\otimes x,\quad \Delta(y)=y\otimes y,
\quad \Delta(z)=\frac{1}{2}[(1+y)z\otimes z+(1-y)z\otimes xz].
\end{align}
\end{definition}
\begin{remark}
\begin{enumerate}
\item In fact, 
$\mathfrak{A}_1(n_1,n_2,n_3,n_4;I_1)\simeq  \left[T\left(
\Omega_1(n_1, n_2, n_3, n_4)\right)\#H_8\right]/{\mathcal{I}(I_1)}$, 
 where ${\mathcal{I}(I_1)}$ is a Hopf ideal generated by relations\eqref{ideal:directsumOneDim1}--\eqref{ideal:directsumOneDim8}. Especially, when parameters in $I_1$ are all equal to zero, then $\mathfrak{A}_1(n_1,n_2,n_3,n_4;I_1)\simeq  \mathfrak{B}\left(
\Omega_1(n_1, n_2, n_3, n_4)\right)\#H_8$.
\item We can observe that any element of $\mathfrak{A}_1(n_1,n_2,n_3,n_4;I_1)$ can be expressed by a linear sum of 
$\{X_1^{\alpha_1}\cdots X_{n_1}^{\alpha_{n_1}}
Y_1^{\beta_1}\cdots Y_{n_2}^{\beta_{n_2}}
p_1^{\gamma_1}\cdots p_{n_3}^{\gamma_{n_3}}
q_1^{\kappa_1}\cdots q_{n_4}^{\kappa_{n_4}}x^cy^dz^e\}$ for all parameteres $\alpha_1$, $\cdots$, $\alpha_{n_1}$, $\beta_1$, $\cdots$,  $\beta_{n_2}$, $\gamma_1$, $\cdots$, $\gamma_{n_3}$, 
$\kappa_1$, $\cdots$,  $\kappa_{n_4}$, $c$, $d$, $e$ in $\{0,1\}$. So 
$\mathfrak{A}_1(n_1,n_2,n_3,n_4;I_1)$ is finite dimensional.
\end{enumerate}
\end{remark}
\begin{proposition}\label{HopfAlge:A_1}
Let $H$ be a finite-dimensional Hopf algebra with coradical $H_8$  such that its infinitesimal braiding  is isomorphic to $\Omega_1(n_1,n_2,n_3,n_4)$, then $H\simeq \mathfrak{A}_1(n_1,n_2,n_3,n_4;I_1)$.
\end{proposition}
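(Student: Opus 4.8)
The plan is to run the standard lifting argument: pin down the relations forced on the degree-one generators of $H$, collect them into a surjection out of $\mathfrak{A}_1(n_1,n_2,n_3,n_4;I_1)$, and close with a dimension count. By Theorem \ref{generatedByDegreeOne} we already know $\mathrm{gr}\,H\simeq\mathfrak{B}(\Omega_1(n_1,n_2,n_3,n_4))\#H_8$, so $H$ is generated by $H_8$ together with skew-primitive lifts of a basis of the degree-one part. First I would fix such lifts $X_j$, $Y_k$, $p_s$, $q_t$ realizing the isotypic pieces $M\langle\mi,x\rangle$, $M\langle-\mi,x\rangle$, $M\langle\mi,y\rangle$, $M\langle-\mi,y\rangle$ of $\Omega_1$, so that $\Delta(X_j)=X_j\otimes 1+x\otimes X_j$ and similarly for the others; here $X_j,Y_k$ carry grouplike $x$ and $p_s,q_t$ carry grouplike $y$.

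Next I would record the ``action'' relations with $x,y,z$, which are rigid because the $H_8$-module and comodule structure on the degree-one part is exactly the prescribed $\Omega_1$. The eigenvalue identities $x\cdot X_j=y\cdot X_j=-X_j$ translate into $xX_j=-X_jx$ and $yX_j=-X_jy$, while the bosonization product $zX_j=\sum(z_{(1)}\cdot X_j)z_{(2)}$, evaluated with $\Delta(z)=\frac12[(1+y)z\otimes z+(1-y)z\otimes xz]$ and $z\cdot X_j=\mi X_j$, collapses to $zX_j=\mi X_jxz$; the analogous computations yield the listed relations for $Y_k$, $p_s$, $q_t$.

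The heart of the proof is the determination of the quadratic relations. For each generator $r$ of the defining ideal of $\mathfrak{B}(\Omega_1)$ I would evaluate it in $H$; since $r$ vanishes in degree two of $\mathrm{gr}\,H$, it lies in the term $H_1$ of the coradical filtration. Using the skew-primitive coproducts together with the action relations, a short computation shows that $r$ is a $(g_r,1)$-skew primitive, where $g_r$ is the product of the grouplikes of its constituent generators. Because every generator has grouplike $x$ or $y$, one always has $g_r\in\{1,xy\}$; and since the degree-one part of $H$ has no component of coaction $1$ or $xy$, the relevant skew-primitive spaces reduce to those of the cosemisimple coradical, namely $\mathcal{P}_{1,1}(H_8)=0$ and $\mathcal{P}_{xy,1}(H_8)=\mathbb{K}(1-xy)$. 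Hence every square and every anticommutator whose product is grouplike $1$, that is $X_j^2$, $Y_k^2$, $p_s^2$, $q_t^2$, the same-type anticommutators, $X_jY_k+Y_kX_j$, and $p_sq_t+q_tp_s$, must vanish; whereas each of the four mixed families between a grouplike-$x$ and a grouplike-$y$ generator equals a scalar multiple of $1-xy$, and these scalars are precisely the parameters $\lambda_{j,s},\mu_{j,t},\zeta_{k,s},\theta_{k,t}$ making up $I_1$.

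Finally I would assemble these relations into a surjective Hopf algebra map $\mathfrak{A}_1(n_1,n_2,n_3,n_4;I_1)\twoheadrightarrow H$. By the remark following Definition \ref{definition:HopfAlgA_1} we have $\dim\mathfrak{A}_1\le 2^{n_1+n_2+n_3+n_4}\cdot 8$, while $\dim H=\dim\mathrm{gr}\,H=\dim\mathfrak{B}(\Omega_1)\cdot\dim H_8=2^{n_1+n_2+n_3+n_4}\cdot 8$, so the surjection forces equality of dimensions and is an isomorphism. The anticipated obstacle is not conceptual but bookkeeping: checking rigidity of the $z$-relations and carrying out the coproduct computation for each family of quadratic relations without sign errors. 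The one genuinely structural input is the skew-primitive computation $\mathcal{P}_{1,1}(H_8)=0$ and $\mathcal{P}_{xy,1}(H_8)=\mathbb{K}(1-xy)$, which simultaneously fixes the shape of the admissible deformations and rules out any further parameters.
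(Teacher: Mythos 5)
Your proof is correct and follows essentially the same route as the paper's: invoke Theorem \ref{generatedByDegreeOne}, fix skew-primitive lifts of the degree-one generators, read off the smash-product relations with $x,y,z$ from the bosonization, and determine the admissible quadratic deformations from the coproduct identity $\Delta(pp^\prime+p^\prime p)=(pp^\prime+p^\prime p)\otimes 1+gg^\prime\otimes(pp^\prime+p^\prime p)$ combined with the fact that the only available skew primitives with grouplike $1$ or $xy$ are the trivial ones. Your explicit computation of $\mathcal{P}_{1,1}$ and $\mathcal{P}_{xy,1}$ and the closing surjection-plus-dimension-count merely make rigorous the steps the paper compresses into ``the lifting relations are only possible for the given generators'' and ``those lifting relations generate a Hopf ideal.''
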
 
\begin{proof}
By Theorem \ref{generatedByDegreeOne}, we have $\mathrm{gr}\,H\simeq \mathfrak{B}(\Omega_1(n_1,n_2,n_3,n_4))\#H_8$.  We can suppose $H$ is generated by elements 
$x$, $y$, $z$ in $H$ and 
\begin{align}
X_j&=(v\boxtimes x)\#1\in M\langle\mi,x\rangle\#1,\quad j=1,\cdots, n_1,\\
Y_k&=(v\boxtimes x)\#1\in M\langle -\mi,x\rangle\#1,\quad k=1,\cdots, n_2,\\
p_s&=(v\boxtimes y)\#1\in M\langle\mi,y\rangle\#1,\quad s=1,\cdots, n_3,\\
q_t&=(v\boxtimes y)\#1\in M\langle -\mi,y\rangle\#1,\quad t=1,\cdots, n_4.
\end{align}
Then it's easy to check that formulae listed in Definition \ref{definition:HopfAlgA_1} except \eqref{ideal:directsumOneDim1}--\eqref{ideal:directsumOneDim8}
hold in $H$ from the bosonization $\mathfrak{B}[\Omega(n_1,n_2,n_3,n_4)]\#H_8$.
Let $p=(v\boxtimes g)\#1$ $\in [M\langle b,g\rangle]\#1$,  $p^\prime=(v^\prime\boxtimes g^\prime)\#1\in [M\langle b^\prime,g^\prime\rangle]\#1$, then 
$\Delta(pp^\prime+p^\prime p)=(pp^\prime+p^\prime p)
\otimes 1+gg^\prime\otimes (pp^\prime+p^\prime p)$, from which we obtain the lifting relations \eqref{ideal:directsumOneDim1}--\eqref{ideal:directsumOneDim8} are only possible for the given generators. In fact, those lifting relations \eqref{ideal:directsumOneDim1}--\eqref{ideal:directsumOneDim8}
generate a Hopf ideal. 
\end{proof}

\begin{remark}
\begin{enumerate}
\item Suppose $H_1(b)$ is a finite  dimensional Hopf algebra with coradical $H_8$ such that its infinitesimal braiding  is isomorphic to $M\langle b,x\rangle$,  
%$\mathfrak{B}\left(M\langle1|V(b),x\rangle\right)\# H_8$, 
where $b=\pm \mi$, then $H_1(b)\simeq \mathfrak{B}[M\langle b,x\rangle]\#H_8$.
Denote $p=(v\boxtimes x)\# 1\in [M\langle b,x\rangle] \#1$, then  $H_1(b)$ is generated by $H_8$, $p$, and with the following relations.
$$p^2=0,\quad xp=-px,\quad yp=-py,\quad zp=bpxz,\quad \Delta(p)=p\otimes 1+x\otimes p.$$
\item Suppose $H_2(b)$ is a finite  dimensional Hopf algebra with coradical $H_8$ 
such that its infinitesimal braiding  is isomorphic to $M\langle b,y\rangle$, where $b=\pm \mi$, then $H_2(b)\simeq \mathfrak{B}[M\langle b,y\rangle]\#H_8$.
Denote $p=(v\boxtimes y)\# 1\in [M\langle b,y\rangle]\#1$, then $H_2(b)$ is generated by $H_8$, $p$, and with the following relations.
$$p^2=0,\quad xp=-px,\quad yp=-py,\quad zp=bpxz,\quad \Delta(p)=p\otimes 1+y\otimes p.$$
\item $H_1(\pm \mi)$ are exactly the two nonisomorphic nonpointed self-dual Hopf algebras of dimension
$16$ with coradical $H_8$ described by C{\u{a}}linescu, D{\u{a}}sc{\u{a}}lescu, Masuoka and Menini in
\cite{MR2037722}. In fact, $H_1(b)\simeq H_2(-b)$ by Corollary \ref{Isomorphism:B(V)H_8}.
\end{enumerate}
\end{remark}

\begin{lemma}\label{HopfAlg:M2_3(y,xy)}
Suppose $H$ is  a finite  dimensional Hopf algebra with coradical $H_8$ such that  its infinitesimal braiding  is isomorphic to $M\langle(y,xy)\rangle$,  then $H\simeq \mathfrak{B}[M\langle(y,xy)\rangle]\# H_8$.
Denote $p_1=[(v_1+v_2)\boxtimes y]\# 1\in M\langle(y,xy)\rangle]\#$ and 
$p_2=[(v_1-v_2)\boxtimes xy]\# 1\in M\langle(y,xy)\rangle]\#1$,  then $H$ is generated by $x$, $y$, $z$, $p_1$, $p_2$,  which satisfy the following relations.
\begin{eqnarray}
x^2=y^2=1,\quad z^2=\frac{1}{2}(1+x+y-xy), \quad
xy=yx,\quad zx=yz,\quad zy=xz,\\
 xp_1=p_1x,\quad yp_1=-p_1y,\quad zp_1=p_2 z,\\
  xp_2=-p_2x,\quad
  yp_2=p_2y,\quad zp_2= p_1 xz,\\
 p_1^2=0,\quad p_2^2=0,\quad 
 p_1p_2p_1p_2+p_2p_1p_2p_1=0.
 \end{eqnarray}
 Its Hopf algebra structure is determined by 
 \begin{align}
  \Delta(p_1)=p_1\otimes 1+y\otimes p_1,\quad \Delta(p_2)=p_2\otimes 1+xy\otimes p_2,\\
 \Delta(x)=x\otimes x,\quad \Delta(y)=y\otimes y,
\quad \Delta(z)=\frac{1}{2}[(1+y)z\otimes z+(1-y)z\otimes xz].
 \end{align}
\end{lemma}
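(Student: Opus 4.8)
The plan is to run the standard lifting argument, taking Theorem~\ref{generatedByDegreeOne} as the starting point. First I would observe that $M\langle(y,xy)\rangle=\Omega_3(0,0)$ lies in the list of Theorem~\ref{NicholsAlg:maintheorem}, so that theorem yields $\mathrm{gr}\,H\simeq\mathfrak{B}(M\langle(y,xy)\rangle)\#H_8$. By Proposition~\ref{NAlgdim1} the Nichols algebra is of type $A_2$ with $\dim\mathfrak{B}(M\langle(y,xy)\rangle)=8$, hence $\dim H=8\cdot 8=64$, and $H$ is generated by $H_8$ together with lifts $p_1,p_2$ of a basis of the degree-one piece $M\langle(y,xy)\rangle$. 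Choosing these lifts so that $\mathbb{K}p_1\oplus\mathbb{K}p_2$ is a Yetter-Drinfel'd submodule of the first term of the coradical filtration isomorphic to $M\langle(y,xy)\rangle$ --- which is possible because $H_8$ is semisimple and skew-primitive elements can be lifted from $\mathrm{gr}\,H$ --- all the \emph{structural} relations in the statement (the $H_8$-actions $xp_1=p_1x$, $yp_1=-p_1y$, $zp_1=p_2z$, $zp_2=p_1xz$, etc., together with $\Delta(p_1)=p_1\otimes 1+y\otimes p_1$ and $\Delta(p_2)=p_2\otimes 1+xy\otimes p_2$) hold in $H$ exactly as in the bosonization $\mathfrak{B}(M\langle(y,xy)\rangle)\#H_8$. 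Consequently the only relations left to verify in $H$ are the defining relations of the type-$A_2$ Nichols algebra: $p_1^2=0$, $p_2^2=0$ and $r:=p_1p_2p_1p_2+p_2p_1p_2p_1=0$.

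Next I would prove that each of these three relation-elements is a genuine primitive and that $\mathcal{P}(H)=0$. Using the structural relations, the coaction grouplikes are $y^2=1$ for $p_1^2$, $(xy)^2=1$ for $p_2^2$, and $(y\cdot xy)^2=1$ for $r$; a direct coproduct computation (for $r$ exploiting $p_1^2=p_2^2=0$ and the $z$-twisted commutation of $p_1,p_2$ to cancel cross terms) then gives $\Delta(t)=t\otimes 1+1\otimes t$ for every $t\in\{p_1^2,p_2^2,r\}$. To conclude $\mathcal{P}(H)=0$, I would note that $\mathcal{P}(H)$ injects into the degree-one part of $\mathrm{gr}\,H$; since $H_8$ is cosemisimple that degree-one part is $M\langle(y,xy)\rangle\#1$, in which no nonzero element is primitive because both grouplikes $y$ and $xy$ differ from $1$, while $\mathcal{P}(H_8)=0$. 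Hence $p_1^2=p_2^2=r=0$ in $H$, so all relations of the statement hold; in particular the liftings are forced to be trivial, in contrast to the $\mathfrak{A}_1$ case where a grouplike equal to $xy$ permits corrections of the form $\lambda(1-xy)$.

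Finally, since $H$ satisfies all the defining relations of $\mathfrak{B}[M\langle(y,xy)\rangle]\#H_8$ and is generated by the images of the corresponding generators, there is a surjective Hopf algebra map $\mathfrak{B}[M\langle(y,xy)\rangle]\#H_8\twoheadrightarrow H$; as both sides have dimension $64$, it is an isomorphism, which is the assertion. I expect the main obstacle to be the coproduct computation showing that the degree-four element $r$ is primitive: unlike the quadratic relations, $\Delta(r)$ produces many mixed terms, and one must carefully use $p_1^2=p_2^2=0$ together with $zp_1=p_2z$, $zp_2=p_1xz$ to check that everything except $r\otimes 1+1\otimes r$ cancels. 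A secondary technical point is justifying the equivariant choice of lifts $p_1,p_2$ so that the structural relations hold on the nose rather than merely modulo lower coradical filtration.
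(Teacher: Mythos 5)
Your proposal is correct and follows essentially the same route as the paper's own proof: invoke Theorem~\ref{generatedByDegreeOne} to get $\mathrm{gr}\,H\simeq \mathfrak{B}[M\langle(y,xy)\rangle]\#H_8$, lift the generators $p_1,p_2$ with their structural relations, verify that $p_1^2$, $p_2^2$ and $p_1p_2p_1p_2+p_2p_1p_2p_1$ are primitive (the quartic one after first killing $p_1^2=p_2^2=0$), and conclude that they vanish since a finite-dimensional Hopf algebra with cosemisimple coradical $H_8$ and coaction grouplikes $y,xy\neq 1$ has $\mathcal{P}(H)=0$, whence $H\simeq\mathrm{gr}\,H$. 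The paper compresses all of this into the single sentence that the three relation elements ``are primitive, so $H\simeq\mathrm{gr}\,H$,'' so your extra details (the vanishing of $\mathcal{P}(H)$ and the $64$-dimensional surjection argument) merely make explicit what the paper leaves implicit.
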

\begin{proof}
By Theorem \ref{generatedByDegreeOne},
$\mathrm{gr}\,H\simeq \mathfrak{B}[M\langle(y,xy)\rangle]\# H_8$.
It's straightforward to  prove that $p_1^2$, $p_2^2$ and 
$p_1p_2p_1p_2+p_2p_1p_2p_1$ are primitive elements, so $H\simeq \mathrm{gr}\,H$.
\end{proof}

\begin{lemma}\label{HopfAlg:M2_3(xy,x)}
Suppose $H$ is  a finite  dimensional Hopf algebra with coradical $H_8$ such that  its infinitesimal braiding  is isomorphic to $M\langle(xy,x)\rangle$, then $H\simeq \mathfrak{B}[M\langle(xy,x)\rangle]\# H_8$. 
Let $p_1=[(v_1+v_2)\boxtimes xy]\# 1$, 
$p_2=[(v_1-v_2)\boxtimes x]\# 1$ be a basis of $M\langle(xy,x)\rangle]\#1$,  then $H$ is generated by $x$, $y$, $z$,  $p_1$, $p_2$,  which satisfy the following relations.
\begin{eqnarray}
x^2=y^2=1,\quad z^2=\frac{1}{2}(1+x+y-xy), \quad
xy=yx,\quad zx=yz,\quad zy=xz,\\
 xp_1=p_1x,\quad yp_1=-p_1y,\quad zp_1=p_2 z,\\
 xp_2=-p_2x, \quad
  yp_2=p_2y,\quad zp_2= p_1 xz,\\  
 p_1^2=0,\quad p_2^2=0,\quad 
 p_1p_2p_1p_2+p_2p_1p_2p_1=0.
 \end{eqnarray}
  Its Hopf algebra structure is determined by 
 \begin{align}
 \Delta(p_1)=p_1\otimes 1+xy\otimes p_1,\quad \Delta(p_2)=p_2\otimes 1+x\otimes p_2,\\
 \Delta(x)=x\otimes x,\quad \Delta(y)=y\otimes y,
\quad \Delta(z)=\frac{1}{2}[(1+y)z\otimes z+(1-y)z\otimes xz].
 \end{align}
\end{lemma}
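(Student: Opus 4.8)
The plan is to follow the template of Lemma~\ref{HopfAlg:M2_3(y,xy)}, since $M\langle(xy,x)\rangle$ plays the analogous role with the roles of the grouplikes $x$ and $xy$ interchanged. First I would invoke Theorem~\ref{generatedByDegreeOne}: as the infinitesimal braiding of $H$ is $M\langle(xy,x)\rangle$, the diagram $R$ of $H$ is its Nichols algebra, so $\mathrm{gr}\,H \simeq \mathfrak{B}[M\langle(xy,x)\rangle]\#H_8$. Lifting a basis of $R(1)$ to $p_1=[(v_1+v_2)\boxtimes xy]\#1$ and $p_2=[(v_1-v_2)\boxtimes x]\#1$ in $H$, the relations with $x,y,z$ (in particular $xp_1=p_1x$, $yp_1=-p_1y$, $zp_1=p_2z$, $xp_2=-p_2x$, $yp_2=p_2y$, $zp_2=p_1xz$) and the coproducts $\Delta(p_1)=p_1\otimes 1+xy\otimes p_1$, $\Delta(p_2)=p_2\otimes 1+x\otimes p_2$ are read off from the bosonization exactly as in Lemma~\ref{YDM2dim1}; these hold automatically in any lifting and transport to $H$ without constraints.

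The substance of the proof is to show that the three generators of the defining ideal of $\mathfrak{B}[M\langle(xy,x)\rangle]$, namely $p_1^2$, $p_2^2$ and $p_1p_2p_1p_2+p_2p_1p_2p_1$, are primitive in $H$ and therefore vanish. For the quadratic relations this is a one-line computation: from $xp_1=p_1x$ and $yp_1=-p_1y$ one gets $xy\,p_1=-p_1\,xy$, so in $\Delta(p_1^2)=p_1^2\otimes 1+(p_1\,xy+xy\,p_1)\otimes p_1+(xy)^2\otimes p_1^2$ the middle term cancels and $(xy)^2=1$, giving $\Delta(p_1^2)=p_1^2\otimes 1+1\otimes p_1^2$; the identical computation with $xp_2=-p_2x$ handles $p_2^2$. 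I would then record that $\rho(p_1^2)=1\otimes p_1^2$ and $\rho(p_2^2)=1\otimes p_2^2$, so each sits in the isotypic component of the trivial grouplike and hence has self-braiding $c(r\otimes r)=r\otimes r$. Since $\dim H<\infty$, a nonzero genuinely primitive $r$ with $c(r\otimes r)=r\otimes r$ would generate a polynomial subalgebra $\mathbb{K}[r]$ of infinite dimension; therefore $p_1^2=p_2^2=0$ in $H$.

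For the quartic relation I would expand $\Delta(p_1p_2p_1p_2+p_2p_1p_2p_1)$ using the skew-primitivity of $p_1,p_2$ and the anticommutation rules just established, and check that all the intermediate skew-primitive terms cancel so that only $r\otimes 1+1\otimes r$ survives. This is the same bookkeeping already performed for $\Omega_6$ in the proof of Theorem~\ref{generatedByDegreeOne}, where $c(r\otimes r)=r\otimes r$ was verified for precisely these words. Noting that $p_1p_2p_1p_2$ and $p_2p_1p_2p_1$ each carry grouplike degree $(xy\cdot x)^2=1$, the element is again primitive with trivial self-braiding, hence zero. Once all three relations are forced to hold, the canonical surjection $\mathfrak{B}[M\langle(xy,x)\rangle]\#H_8\twoheadrightarrow H$ is an isomorphism by comparing dimensions with $\mathrm{gr}\,H$, yielding $H\simeq\mathfrak{B}[M\langle(xy,x)\rangle]\#H_8$ with no nontrivial lifting parameters.

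The main obstacle is the degree-four primitivity verification: tracking the coproduct through the length-four words and confirming that the skew-primitive cross-terms cancel. An alternative that avoids it is to deduce the lemma from Lemma~\ref{HopfAlg:M2_3(y,xy)} through the Hopf automorphism $\tau_3$, using $[M\langle(xy,x)\rangle]^{\tau_3}\simeq M\langle(y,xy)\rangle$ from Corollary~\ref{Isomorphism:B(V)H_8} together with the transport-of-structure isomorphism $R^{\psi}\#H\simeq R\#H$ of \cite[Lemma 6.1]{MR1780094}. I would nonetheless keep the direct computation as the primary route, since it also pins down the explicit presentation asserted in the statement.
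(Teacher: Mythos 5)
Your proposal is correct and follows essentially the same route as the paper's own proof: invoke Theorem \ref{generatedByDegreeOne} to get $\mathrm{gr}\,H\simeq \mathfrak{B}[M\langle(xy,x)\rangle]\#H_8$, verify that $p_1^2$, $p_2^2$ and $p_1p_2p_1p_2+p_2p_1p_2p_1$ are primitive in $H$ (hence zero, since a finite-dimensional Hopf algebra in characteristic zero admits no nonzero primitive elements), and conclude $H\simeq\mathrm{gr}\,H$. Even your fallback argument via $\tau_3$ and Corollary \ref{Isomorphism:B(V)H_8} mirrors the remark the paper appends at the end of its own proof.
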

\begin{proof}
By Theorem \ref{generatedByDegreeOne},
$\mathrm{gr}\,H\simeq \mathfrak{B}[M\langle(xy,x)\rangle]\# H_8$.
It's straightforward to  prove that $p_1^2$, $p_2^2$ and 
$p_1p_2p_1p_2+p_2p_1p_2p_1$ are primitive elements, so $H\simeq \mathrm{gr}\,H$. In fact, 
$\mathfrak{B}[M\langle(xy,x)\rangle]$$\# H_8$
is isomorphic to  $\mathfrak{B}[M\langle(y,xy)\rangle]\# H_8$ by Corollary \ref{Isomorphism:B(V)H_8}.
\end{proof}

\begin{proposition}\label{HopfAlg:Omega2}
Suppose $H$ is  a finite  dimensional Hopf algebra with coradical $H_8$ such that its infinitesimal braiding  is isomorphic to $\Omega_2(n_1, n_2)$, then $H\simeq \mathfrak{B}[\Omega_2(n_1, n_2)]\# H_8$. 
Denote 
\begin{align}
p_1=[(v_1+v_2)\boxtimes xy]\# 1,~~  
p_2=[(v_1-v_2)\boxtimes x]\# 1,\quad  v_1, v_2\in V_2,\\
X_j=(v\boxtimes x)\#1, \quad v\in V_1(\mi), \quad j=1,\cdots, n_1,\\
Y_k= (v^\prime\boxtimes x)\#1, \quad v^\prime\in V_1(-\mi),\quad k=1,\cdots, n_2,
\end{align}
then $H$ is generated by $x$,  $y$, $z$, $p_1$, $p_2$, $\{X_j\}_{j=1,\cdots, n_1}$,  $\{Y_k\}_{k=1,\cdots, n_2}$ satisfying the following relations.
 \begin{align}
x^2=y^2=1,\quad z^2=\frac{1}{2}(1+x+y-xy), \\
xy=yx,\quad zx=yz,\quad zy=xz,\\
 xp_1=p_1x,\quad yp_1=-p_1y,\quad zp_1=p_2 z,\\
 xp_2=-p_2x, \quad
  yp_2=p_2y,\quad zp_2= p_1 xz,\\  
 p_1^2=0,\quad p_2^2=0,\quad 
 p_1p_2p_1p_2+p_2p_1p_2p_1=0,\\
  xX_j=-X_jx,\quad yX_j=-yX_j,\quad zX_j=\mi
  X_jxz,\\
   xY_k=-Y_kx,\quad yY_k=-yY_k,\quad zY_k=-\mi
  Y_kxz,\\
   X_{j_1}X_{j_2}+X_{j_2}X_{j_1}=0,\quad j_1,j_2\in\{1,\cdots,n_1\},\\
  Y_{k_1}Y_{k_2}+Y_{k_2}Y_{k_1}=0,\quad  k_1,k_2\in\{1,\cdots, n_2\},\\
  X_j^2=0,\quad Y_k^2=0,\quad X_jY_k+Y_kX_j=0,  \\
    p_2X_j+X_jp_2=0,
 \quad p_2Y_k+Y_kp_2=0,\label{ideal:xxM(2_3xy,x)1}\\
 p_1X_j-X_jp_1=0, \quad
 p_1Y_k-Y_kp_1=0.\label{ideal:xxM(2_3xy,x)2}
 \end{align}
 Its Hopf algebra structure is determined by 
  \begin{align}
   \Delta(p_1)=p_1\otimes 1+xy\otimes p_1,\quad \Delta(p_2)=p_2\otimes 1+x\otimes p_2,\\
   \Delta(X_j)=X_j\otimes 1+x\otimes X_j, \quad
 \Delta(Y_k)=Y_k\otimes 1+x\otimes Y_k,\\
 \Delta(x)=x\otimes x,\quad \Delta(y)=y\otimes y,
\quad \Delta(z)=\frac{1}{2}[(1+y)z\otimes z+(1-y)z\otimes xz].
 \end{align}
\end{proposition}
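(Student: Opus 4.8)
The plan is to run the lifting argument exactly as in Proposition~\ref{HopfAlge:A_1} and Lemma~\ref{HopfAlg:M2_3(xy,x)}, the only genuinely new feature being the cross relations between the $A_2$-block $\{p_1,p_2\}$ and the $A_1$-blocks $\{X_j\}$, $\{Y_k\}$. First I would invoke Theorem~\ref{generatedByDegreeOne} to get $\mathrm{gr}\,H\simeq\mathfrak{B}(\Omega_2(n_1,n_2))\#H_8$, and then choose lifts $p_1,p_2,X_j,Y_k\in H$ of a basis of the infinitesimal braiding, as in the displayed formulas of the statement. As in Proposition~\ref{HopfAlge:A_1}, the relations encoding the $H_8$-module structure (the commutation rules with $x,y,z$) and the coproducts $\Delta(p_1)=p_1\otimes1+xy\otimes p_1$, $\Delta(p_2)=p_2\otimes1+x\otimes p_2$, $\Delta(X_j)=X_j\otimes1+x\otimes X_j$, $\Delta(Y_k)=Y_k\otimes1+x\otimes Y_k$ hold already in $H$, directly from the bosonization structure in degree one. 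Everything then reduces to showing that each defining relation of $\mathfrak{B}(\Omega_2(n_1,n_2))$ lifts to $0$ in $H$, i.e.\ that no nonzero lifting parameter survives.

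The routine part concerns the relations whose associated grouplike is $1$. Writing each relation $r$ as a product of the skew-primitive generators and using the coproducts above, a short computation shows the cross terms cancel against the commutation rules, so $r$ is genuinely primitive: this covers $p_1^2$, $p_2^2$, $p_1p_2p_1p_2+p_2p_1p_2p_1$ (already primitive by Lemma~\ref{HopfAlg:M2_3(xy,x)}), $X_j^2$, $Y_k^2$, $X_{j_1}X_{j_2}+X_{j_2}X_{j_1}$, $Y_{k_1}Y_{k_2}+Y_{k_2}Y_{k_1}$, $X_jY_k+Y_kX_j$ (as in Proposition~\ref{HopfAlge:A_1}), together with $p_2X_j+X_jp_2$ and $p_2Y_k+Y_kp_2$, all of which carry grouplike $x\cdot x=1$ or $x^2=1$. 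Now no simple summand of $\Omega_2$ has trivial coaction (the grouplikes occurring are only $x$ and $xy$), so in $\mathrm{gr}\,H\simeq\mathfrak{B}(\Omega_2)\#H_8$ there are no primitives in positive degree, and $P(H_8)=0$; hence $P(H)=0$ and every such $r$ vanishes.

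The only delicate relations are $p_1X_j-X_jp_1$ and $p_1Y_k-Y_kp_1$. Their associated grouplike is $xy\cdot x=y\neq1$, and the same kind of coproduct computation (now using $xp_1=p_1x$ and $xyX_j=X_jxy$, both consequences of the module relations) shows that each of these elements is $(1,y)$-skew primitive. Counting $(1,y)$-skew primitives in $\mathrm{gr}\,H$ — none in positive degree since no summand of $\Omega_2$ has grouplike $y$, and only $\mathbb{K}(1-y)$ inside the cosemisimple $H_8$ — forces $\dim\mathcal{P}_{1,y}(H)\le1$, so $p_1X_j-X_jp_1=\lambda(1-y)$ for some $\lambda\in\mathbb{K}$. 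The key point, and the one place where $\Omega_2$ behaves differently from $\Omega_1$, is a symmetry obstruction that rules out $\lambda\neq0$: conjugating by $x$ (with $x^{-1}=x$) and using $xp_1x=p_1$, $xX_jx=-X_j$ gives $x(p_1X_j-X_jp_1)x=-(p_1X_j-X_jp_1)$, whereas $x(1-y)x=1-y$. Thus $\lambda(1-y)=-\lambda(1-y)$, and since $1-y\neq0$ we get $\lambda=0$; the same argument with $Y_k$ handles $p_1Y_k-Y_kp_1$. (In $\Omega_1$ the analogous $x$- and $y$-eigenvalues all equal $+1$, which is precisely why the liftings there are nontrivial.)

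Putting these together shows that all defining relations lift to $0$, so $H\simeq\mathfrak{B}[\Omega_2(n_1,n_2)]\#H_8$ and the presentation in the statement is exactly that of the bosonization. The main obstacle is the skew-primitive analysis of $p_1X_j-X_jp_1$ and $p_1Y_k-Y_kp_1$, where the eigenvalue/symmetry argument is essential; all remaining relations are bookkeeping of coproducts already performed in Proposition~\ref{HopfAlge:A_1} and Lemma~\ref{HopfAlg:M2_3(xy,x)}. Finally, the corresponding statement for $\Omega_3$ is then immediate from the isomorphism recorded in Corollary~\ref{Isomorphism:B(V)H_8}.
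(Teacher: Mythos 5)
Your proof is correct and follows essentially the same route as the paper's: reduce via Theorem \ref{generatedByDegreeOne}, Lemma \ref{HopfAlg:M2_3(xy,x)} and Proposition \ref{HopfAlge:A_1} to the cross relations, observe that $p_2X_j+X_jp_2$, $p_2Y_k+Y_kp_2$ are primitive while $p_1X_j-X_jp_1$, $p_1Y_k-Y_kp_1$ are $(1,y)$-skew primitive, and kill the potential lifting $\lambda(1-y)$ by the $x$-conjugation sign $x(p_1X_j-X_jp_1)x=-(p_1X_j-X_jp_1)$. The paper records exactly these coproduct and commutation formulas; your write-up merely spells out the coradical-filtration/cosemisimplicity counting that the paper leaves implicit.
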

\begin{proof}
By Theorem \ref{generatedByDegreeOne},
$\mathrm{gr}\,H\simeq \mathfrak{B}[\Omega_2(n_1,n_2)]\# H_8$. By Lemma \ref{HopfAlg:M2_3(xy,x)} and Proposition \ref{HopfAlge:A_1}, 
we only need to prove that the lifting relations \eqref{ideal:xxM(2_3xy,x)1} and \eqref{ideal:xxM(2_3xy,x)2} are  only possible by the given generators, which can be obtained  from the following formulae
\begin{align*}
x(p_1X_j-X_jp_1)&=-(p_1X_j-X_jp_1)x,\quad x(p_1Y_k-Y_kp_1)=-(p_1Y_k-Y_kp_1),\\
\Delta(p_1X_j-X_jp_1)&=(p_1X_j-X_jp_1)\otimes 1+y
\otimes (p_1X_j-X_jp_1),\\
\Delta(p_1Y_k-Y_kp_1)&=(p_1Y_k-Y_kp_1)\otimes 1+y
\otimes (p_1Y_k-Y_kp_1),\\
\Delta(p_2X_j+X_jp_2)&=(p_2X_j+X_jp_2)\otimes 1+1\otimes (p_2X_j+X_jp_2),\\
\Delta(p_2Y_k+Y_kp_2)&=(p_2Y_k+Y_kp_2)\otimes 1+1\otimes (p_2Y_k+Y_kp_2).
\end{align*}
So $H\simeq \mathrm{gr}\, H$. 
\end{proof}

\begin{definition}\label{definition:A_6}
For  $\lambda \in \mathbb{K}$, denote by $\mathfrak{A}_6(\lambda)$ the algebra generated by $x$, $y$, $z$, $p_1$, $p_2$, $q_1$, $q_2$
satisfying the following relations
\begin{eqnarray}
x^2=y^2=1,\quad z^2=\frac{1}{2}(1+x+y-xy), \\
xy=yx,\quad zx=yz,\quad zy=xz,\\
 xp_1=p_1x,\quad yp_1=-p_1y,\quad zp_1=p_2 z,\\
  xp_2=-p_2x,\quad
  yp_2=p_2y,\quad zp_2= p_1 xz,\\
 p_1^2=0,\quad p_2^2=0,\quad 
 p_1p_2p_1p_2+p_2p_1p_2p_1=0,
 \label{ideal:TwoM2_3_1}\\
 xq_1=q_1x,\quad yq_1=-q_1y,\quad zq_1=q_2 z,\\
 xq_2=-q_2x, \quad
  yq_2=q_2y,\quad zq_2= q_1 xz,\\  
 q_1^2=0,\quad q_2^2=0,\quad 
 q_1q_2q_1q_2+q_2q_1q_2q_1=0,\label{ideal:TwoM2_3_2}\\
 p_1q_1+q_1p_1=\lambda(1-x),\quad 
p_2q_2+q_2p_2=\lambda(1-y), \label{ideal:TwoM2_3_3}\\
 p_1q_2-q_2p_1=0,\quad p_2q_1+q_1p_2=0.\label{ideal:TwoM2_3_4}
 \end{eqnarray}
 It is a Hopf algebra with its structure determined by 
\begin{align}
\Delta(x)=x\otimes x,\quad \Delta(y)=y\otimes y,
\quad \Delta(z)=\frac{1}{2}[(1+y)z\otimes z+(1-y)z\otimes xz],\\
\Delta(p_1)=p_1\otimes 1+y\otimes p_1,\quad \Delta(p_2)=p_2\otimes 1+xy\otimes p_2,\\
\Delta(q_1)=q_1\otimes 1+xy\otimes q_1,\quad \Delta(q_2)=q_2\otimes 1+x\otimes q_2.
\end{align}
\end{definition}
\begin{remark}
In fact, $\mathfrak{A}_6(\lambda)\simeq [T(\Omega_6)\#H_8]/{\mathcal{I}(\lambda)}$, where ${\mathcal{I}(\lambda)}$ is a Hopf ideal generated by relations \eqref{ideal:TwoM2_3_1}, \eqref{ideal:TwoM2_3_2},
\eqref{ideal:TwoM2_3_3} and \eqref{ideal:TwoM2_3_4}.
It's obvious that $\mathfrak{A}_6(\lambda)$ is finite-dimensional.  
\end{remark}
\begin{proposition}\label{HopfAlg:A_6}
Suppose $H$ is  a finite  dimensional Hopf algebra with coradical $H_8$ such that  its infinitesimal braiding  is isomorphic to $\Omega_6$, then $H\simeq \mathfrak{A}_6(\lambda)$.
\end{proposition}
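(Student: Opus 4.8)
The plan is to follow the same pattern as Propositions \ref{HopfAlge:A_1} and \ref{HopfAlg:Omega2}: first pass to the associated graded object, then lift each $A_2$ block separately, and finally determine the cross-relations between the two blocks. By Theorem \ref{generatedByDegreeOne} we have $\mathrm{gr}\,H\simeq\mathfrak{B}(\Omega_6)\#H_8$, so $H$ is generated by $x,y,z$ together with lifts $p_1,p_2,q_1,q_2$ of the degree-one generators, where $\Delta(p_1)=p_1\otimes1+y\otimes p_1$ and $\Delta(p_2)=p_2\otimes1+xy\otimes p_2$ come from $M\langle(y,xy)\rangle$, while $\Delta(q_1)=q_1\otimes1+xy\otimes q_1$ and $\Delta(q_2)=q_2\otimes1+x\otimes q_2$ come from $M\langle(xy,x)\rangle$. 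Restricting to a single block, Lemmas \ref{HopfAlg:M2_3(y,xy)} and \ref{HopfAlg:M2_3(xy,x)} show that all relations internal to that block, in particular \eqref{ideal:TwoM2_3_1} and \eqref{ideal:TwoM2_3_2}, already hold in $H$ without deformation. Thus everything reduces to pinning down the four cross-expressions $p_1q_1+q_1p_1$, $p_2q_2+q_2p_2$, $p_1q_2-q_2p_1$ and $p_2q_1+q_1p_2$.

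The next step is a direct comultiplication computation. Using the commutation rules of the $p_i,q_j$ with the group-likes, one checks that in each case the mixed terms of $\Delta$ cancel, so that each cross-expression is skew-primitive: $\Delta(p_1q_1+q_1p_1)=(p_1q_1+q_1p_1)\otimes1+x\otimes(p_1q_1+q_1p_1)$, $\Delta(p_2q_2+q_2p_2)=(p_2q_2+q_2p_2)\otimes1+y\otimes(p_2q_2+q_2p_2)$, $\Delta(p_1q_2-q_2p_1)=(p_1q_2-q_2p_1)\otimes1+xy\otimes(p_1q_2-q_2p_1)$ and $\Delta(p_2q_1+q_1p_2)=(p_2q_1+q_1p_2)\otimes1+1\otimes(p_2q_1+q_1p_2)$. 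Since $H_8$ is cosemisimple, for a group-like $g$ the space $P_{g,1}(H)$ of $(g,1)$-skew-primitives of $H$ equals $\mathbb{K}(1-g)$ plus the degree-one part of $\Omega_6$ whose coaction degree is $g$; in particular $P_{1,1}(H)=0$, which already forces $p_2q_1+q_1p_2=0$.

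To remove the unwanted degree-one contributions from the remaining three expressions, I would use the $\mathbb{Z}_2\times\mathbb{Z}_2$-grading given by conjugation by $x$ and $y$. A short bookkeeping of eigenvalues (for instance $p_1q_1+q_1p_1$ has $x$-parity $+1$ whereas the only degree-one skew-primitive of coaction degree $x$, namely $q_2$, has $x$-parity $-1$; and $p_1q_2-q_2p_1$ has parities $(-1,-1)$, a combination not shared by $1-xy$ nor by any allowed skew-primitive of coaction degree $xy$) shows that $p_1q_1+q_1p_1=\lambda(1-x)$ and $p_2q_2+q_2p_2=\lambda'(1-y)$ for scalars $\lambda,\lambda'$, while $p_1q_2-q_2p_1=0$. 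This yields \eqref{ideal:TwoM2_3_4} and the shape of \eqref{ideal:TwoM2_3_3}.

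The main obstacle is to prove that the two parameters coincide, $\lambda=\lambda'$, which is precisely what makes $\mathfrak{A}_6$ a one-parameter family. I would exploit the generator $z$, which intertwines the two blocks via $zp_1=p_2z$, $zq_1=q_2z$ and $zx=yz$. Multiplying $p_1q_1+q_1p_1=\lambda(1-x)$ on the left by $z$ gives $(p_2q_2+q_2p_2)z=\lambda(1-y)z$; since $z$ is invertible in $H_8$ it may be cancelled, forcing $p_2q_2+q_2p_2=\lambda(1-y)$ and hence $\lambda'=\lambda$. Finally these computations exhibit a surjective Hopf algebra map $\mathfrak{A}_6(\lambda)\to H$, and comparing dimensions via $\dim\mathfrak{A}_6(\lambda)=\dim\bigl(\mathfrak{B}(\Omega_6)\#H_8\bigr)=\dim H$ shows it is an isomorphism, so $H\simeq\mathfrak{A}_6(\lambda)$.
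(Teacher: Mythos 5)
Your proof is correct and follows essentially the same route as the paper's: reduce to $\mathrm{gr}\,H\simeq\mathfrak{B}(\Omega_6)\#H_8$ via Theorem \ref{generatedByDegreeOne}, import the block-internal relations from Lemmas \ref{HopfAlg:M2_3(y,xy)} and \ref{HopfAlg:M2_3(xy,x)}, show the four cross-expressions are skew-primitive, and use the intertwining by $z$ to force a single parameter $\lambda$. The only (harmless) deviations are that you spell out the conjugation-parity argument excluding degree-one contributions, and you kill $p_1q_2-q_2p_1$ directly by parity, where the paper instead keeps a provisional $\lambda_3(1-xy)$ and eliminates it via $z(p_1q_2-q_2p_1)=(p_2q_1+q_1p_2)xz$.
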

\begin{proof}
By Theorem \ref{generatedByDegreeOne},  $\mathrm{gr}\,H\simeq \mathfrak{B}(\Omega_6)\#H_8$. We can suppose that $H$ is generated by generators $x$, $y$, $z$ in $H_8$ and $p_1=[(v_1+v_2)\boxtimes y]\# 1$,  
$p_2=[(v_1-v_2)\boxtimes xy]\# 1$, 
$q_1=[(v_1+v_2)\boxtimes xy]\# 1$, 
$q_2=[(v_1-v_2)\boxtimes x]\# 1$ in $[M\langle(y,xy)\rangle\oplus M\langle(xy,x)\rangle]\#1$. 
It's easy to see that formulae above in Definition \ref{definition:A_6} except 
\eqref{ideal:TwoM2_3_3} and \eqref{ideal:TwoM2_3_4} hold in $H$ from the bosonization $\mathfrak{B}(\Omega_6)\#H_8$ and Lemma \ref{HopfAlg:M2_3(y,xy)}, \ref{HopfAlg:M2_3(xy,x)}.
Since $\mathrm{gr}\,[T(\Omega_6)\#H_8]/{\mathcal{I}(\lambda)}\simeq \mathfrak{B}(\Omega_6)\#H_8$, it's enough to prove that  
\eqref{ideal:TwoM2_3_3} and \eqref{ideal:TwoM2_3_4} are the only possible lifting relations by the given generators.

Since $r=0$ in $\mathrm{gr}\,H$ for $r=p_1q_1+q_1p_1$, 
$p_2q_2+q_2p_2$, 
 $p_1q_2-q_2p_1$, $p_2q_1+q_1p_2$, we have $r\in H_8\oplus \mathbb{K}\left(\Omega_5\#1\right)$.
It's only possible that  
$p_1q_1+q_1p_1=\lambda_1(1-x)$, 
$p_2q_2+q_2p_2=\lambda_2(1-y)$, 
 $p_1q_2-q_2p_1=\lambda_3(1-xy)$, $p_2q_1+q_1p_2=0$ for $\lambda_1$, $\lambda_2$ and $\lambda_3$ in $\mathbb{K}$,   because 
\begin{align*}
\Delta(p_1q_1+q_1p_1)=(p_1q_1+q_1p_1)\otimes 1+x\otimes 
(p_1q_1+q_1p_1),\\
\Delta(p_2q_2+q_2p_2)=(p_2q_2+q_2p_2)\otimes 1+
y\otimes (p_2q_2+q_2p_2),\\
\Delta(p_1q_2-q_2p_1)=(p_1q_2-q_2p_1)\otimes 1+xy
\otimes (p_1q_2-q_2p_1),\\
\Delta(p_2q_1+q_1p_2)=(p_2q_1+q_1p_2)\otimes 1+
1\otimes (p_2q_1+q_1p_2).
\end{align*}
Since $z(p_1q_1+q_1p_1)=(p_2q_2+q_2p_2)z$ and 
$z(p_1q_2-q_2p_1)=(p_2q_1+q_1p_2)xz$, we have $\lambda_1=\lambda_2$ and $\lambda_3=0$. 
\end{proof}

\begin{lemma}\label{HopfAlg:W(b_1,-1)}
Suppose $H$ is a finite  dimensional Hopf algebra with coradical $H_8$ such that its infinitesimal braiding  is isomorphic to $W^{b_1,-1}$,  where $b_1=\pm 1$.
 Then there exist  parameters $\lambda_1$ and $\lambda_2$ in $\mathbb{K}$ such that $H$ is generated by $x$, $y$, $z$, $p_1$, $p_2$, which satisfy the following relations.
\begin{eqnarray}
x^2=y^2=1,\quad z^2=\frac{1}{2}(1+x+y-xy), \\
xy=yx,\quad zx=yz,\quad zy=xz,\\
xp_1=p_1x,\quad  yp_1=p_1y,\quad
xp_2=-p_2x,\quad  yp_2=-p_2y,\label{HopfAlg:W(b_1,-1)0}\\ 
zp_1=-p_1 z,\quad 
zp_2=\mi b_1 p_2 xz,\label{HopfAlg:W(b_1,-1)1}\\
p_1^2=\lambda_1(1-xy), \quad p_2^2=\mi b_1\lambda_1(1-xy),\label{ideal:W(b_1,-1)1}\\
p_1p_2p_1p_2+p_2p_1p_2p_1=\lambda_2(1-xy).\label{ideal:W(b_1,-1)2}
\end{eqnarray}
Its Hopf algebra structure is determined by
\begin{align}
\Delta(x)&=x\otimes x,\quad \Delta(y)=y\otimes y,
\quad \Delta(z)=\frac{1}{2}[(1+y)z\otimes z+(1-y)z\otimes xz],\\
\Delta(p_1)&=\left[f_{00}-
\mi b_1f_{11}\right]z\otimes p_1+\left[f_{10}+
\mi b_1f_{01}\right]z\otimes p_2+ 
p_1\otimes 1,
\label{Delta:W(b_1-1)p1}\\
\Delta(p_2)&=\left[f_{00}+
\mi b_1f_{11}\right]z\otimes p_2+
\left[f_{10}-
\mi b_1f_{01}\right]z\otimes p_1+p_2\otimes 1.\label{Delta:W(b_1-1)p2}
\end{align}
\end{lemma}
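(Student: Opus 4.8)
The plan is to run the standard lifting argument: first reduce to $\mathrm{gr}\,H$, then compute the coproducts of the candidate relation elements, recognise them as skew-primitive, and finally pin them down using the cosemisimplicity of the coradical. By Theorem \ref{generatedByDegreeOne}, $\mathrm{gr}\,H\simeq\mathfrak{B}(W^{b_1,-1})\#H_8$, so $H$ is generated by $H_8$ together with lifts $p_1,p_2\in H$ of the basis $\{w_1^{b_1,-1}+\mi b_1 w_2^{b_1,-1},\,w_1^{b_1,-1}-\mi b_1 w_2^{b_1,-1}\}$ of the infinitesimal braiding (the basis diagonalising the $x,y$-action in the remark following Lemma \ref{YDM2dim2}). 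The $H_8$-relations hold because $H_8$ is a Hopf subalgebra, and the action relations \eqref{HopfAlg:W(b_1,-1)0}, \eqref{HopfAlg:W(b_1,-1)1} together with the coproducts \eqref{Delta:W(b_1-1)p1}, \eqref{Delta:W(b_1-1)p2} are exactly the degree-preserving identities read off from the bosonization, e.g. $(1\#z)(p_1\#1)=\sum(z_{(1)}\cdot p_1)\#z_{(2)}$ with $\Delta(z)=\tfrac12(z\otimes z+z\otimes xz+yz\otimes z-yz\otimes xz)$, $z\cdot p_1=-p_1$ and $z\cdot p_2=\mi b_1 p_2$. Thus only the relations coming from the defining ideal of the type-$A_2$ Nichols algebra $\mathfrak{B}(W^{b_1,-1})$ (Proposition \ref{NAlgdim1}), namely $p_1^2$, $p_2^2$ and $p_1p_2p_1p_2+p_2p_1p_2p_1$, can be deformed, and the task is to determine these deformations.

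For the quadratic part I would compute $\Delta(p_1^2)$ and $\Delta(p_2^2)$ directly from \eqref{Delta:W(b_1-1)p1}, \eqref{Delta:W(b_1-1)p2}. Writing $A=f_{00}-\mi b_1 f_{11}$, $B=f_{10}+\mi b_1 f_{01}$ and using $zf_{00}=f_{00}z$, $zf_{11}=f_{11}z$, $zf_{10}=f_{01}z$, $f_{jk}p_1=p_1f_{jk}$ and $zp_1=-p_1z$, all mixed terms cancel and
\begin{align*}
\Delta(p_1^2)&=p_1^2\otimes 1+\tfrac{1+xy}{2}\otimes p_1^2+\mi b_1\tfrac{1-xy}{2}\otimes p_2^2,\\
\Delta(p_2^2)&=p_2^2\otimes 1+\tfrac{1+xy}{2}\otimes p_2^2-\mi b_1\tfrac{1-xy}{2}\otimes p_1^2.
\end{align*}
Hence $u:=p_2^2-\mi b_1 p_1^2$ satisfies $\Delta(u)=u\otimes1+1\otimes u$, i.e. $u$ is primitive. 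Since $H_0=H_8$ is cosemisimple it has no nonzero primitives, and the infinitesimal braiding $W^{b_1,-1}$ contains no trivial subcomodule (its $H_8$-coaction is supported on the $2$-dimensional matrix block through $z$); therefore $\mathcal P(H)=0$ and $u=0$, giving $p_2^2=\mi b_1 p_1^2$. Substituting back yields $\Delta(p_1^2)=p_1^2\otimes1+xy\otimes p_1^2$, so $p_1^2$ is $(xy,1)$-skew-primitive. As the $(xy,1)$-skew-primitives of $H$ reduce to those of the cosemisimple $H_8$, which are $\mathbb K(1-xy)$, we get $p_1^2=\lambda_1(1-xy)$ and $p_2^2=\mi b_1\lambda_1(1-xy)$, i.e. \eqref{ideal:W(b_1,-1)1}.

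For the quartic relation $r=p_1p_2p_1p_2+p_2p_1p_2p_1$ the same scheme applies, and the group-like over which $r$ sits can be read off from the comodule structure of the bosonization: a short computation gives $\rho(p_1p_2)=\tfrac{x+y}{2}\otimes p_1p_2+\mi b_1\tfrac{x-y}{2}\otimes p_2p_1$ and symmetrically for $p_2p_1$, whence the cross terms cancel and $\rho(r)=xy\otimes r$. I therefore expect the direct computation in $H$ to give $\Delta(r)=r\otimes1+xy\otimes r$, so that $r$ is again $(xy,1)$-skew-primitive and $r=\lambda_2(1-xy)$, which is \eqref{ideal:W(b_1,-1)2}; collecting the relations and coproducts then proves the presentation. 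The main obstacle is the degree-four coproduct bookkeeping for $r$: one must carry along the quadratic relations already obtained to clear the lower-order terms and verify that the left leg collapses to the single group-like $xy$ rather than to a combination of $x$ and $y$. The conceptual point that makes this work is exactly that $H_8$ is cosemisimple and $W^{b_1,-1}$ is a non-trivial (matrix-block) comodule, so that in each relevant degree the only available skew-primitives are the trivial multiples of $1-xy$.
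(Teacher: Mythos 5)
Your proposal is correct and follows essentially the same route as the paper's proof: reduce to $\mathrm{gr}\,H\simeq\mathfrak{B}\left(W^{b_1,-1}\right)\#H_8$ via Theorem \ref{generatedByDegreeOne}, read the action/coaction relations and the coproducts \eqref{Delta:W(b_1-1)p1}--\eqref{Delta:W(b_1-1)p2} off the bosonization, compute $\Delta(p_1^2)$, $\Delta(p_2^2)$ (your formulas agree with the paper's), and conclude that the deformed relations are trivial skew-primitives over $xy$, so they equal multiples of $1-xy$. The only step you defer --- that after substituting the quadratic liftings one gets $\Delta(r)=r\otimes 1+xy\otimes r$ for $r=p_1p_2p_1p_2+p_2p_1p_2p_1$ --- is precisely the bulk of the paper's proof (it writes $\Delta(p_1p_2)=B-A+E_1$, $\Delta(p_2p_1)=B+A+E_2$, shows $A^2+B^2=0$ using $p_1^2=\lambda_1(1-xy)$, $p_2^2=\mi b_1\lambda_1(1-xy)$, shows the cross terms anticommute away, and is left with $E_1^2+E_2^2=xy\otimes r+r\otimes 1$), and it confirms your prediction; your explicit primitive-element argument with $u=p_2^2-\mi b_1p_1^2$ merely fills in the deduction the paper states directly.
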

\begin{remark}
In fact,  $H\simeq \left[T\left(W^{b_1,-1}\right)\#H_8\right]\Big/{\mathcal{I}(\lambda_1,\lambda_2)}$, where ${\mathcal{I}(\lambda_1,\lambda_2)}$ is a Hopf ideal generated by \eqref{ideal:W(b_1,-1)1} and \eqref{ideal:W(b_1,-1)2}. It's obvious that $H$ is finite dimensional. 
\end{remark}
\begin{proof}
By Theorem \ref{generatedByDegreeOne}, 
 $\mathrm{gr}\,H\simeq \mathfrak{B}(W^{b_1,-1})\#H_8$. We can suppose $H$ is generated by $x$, $y$, $z$ and $p_1$, $p_2$
 with $x$, $y$, $z$ in $H$ and $p_1=\left(w_1^{b_1,-1}+\mi b_1w_2^{b_1,-1}\right)\#1$,   $p_2=\left(w_1^{b_1,-1}-\mi b_1w_2^{b_1,-1}\right)\#1$.
 
Formulae \eqref{HopfAlg:W(b_1,-1)0}, \eqref{HopfAlg:W(b_1,-1)1}, 
 \eqref{Delta:W(b_1-1)p1} and  \eqref{Delta:W(b_1-1)p2} hold in $H$
 by a  straightforward computation for the bosonization $\mathfrak{B}(W^{b_1,-1})\#H_8$. Since 
%%%%%%%%%%%%%Delta(p_1^2)%%%%%%%%
\begin{align*}
\Delta(p_1^2)
%&=~\left\{\frac{1}{4}\left[(1+x)(1+y)+
%\mi b_1(1-y)(x-1)\right]z\otimes p_1+\right.\\
%&\quad +\left.
%\frac{1}{4}\left[(1-x)(1+y)+
%\mi b_1(1-y)(x+1)\right]z\otimes p_2+ 
%p_1\otimes 1\right\}^2\\
%&=~\left\{\frac{1}{4}\left[(1+x)(1+y)+
%\mi b_1(1-y)(x-1)\right]z\right\}^2\otimes p_1^2+\\
%&\quad +\left\{
%\frac{1}{4}\left[(1-x)(1+y)+
%\mi b_1(1-y)(x+1)\right]z\right\}^2\otimes p_2^2+ 
%p_1^2\otimes 1\\
&=\frac{1}{2}(1+xy)\otimes p_1^2+
\frac{\mi b_1}{2}(1-xy)\otimes p_2^2+ 
p_1^2\otimes 1,\\
\Delta(p_2^2)
&=\frac{1}{2}(1+xy)\otimes p_2^2-\frac{\mi b_1}{2}(1-xy)\otimes p_1^2+ 
p_2^2\otimes 1, 
\end{align*}
%%%%%%%%%%p_2^2%%%%%%%%%
%\begin{align*}
%\Delta(p_2^2)&=\left\{\frac{1}{4}\left[(1+x)(1+y)-
%\mi b_1(1-y)(x-1)\right]z\otimes p_2+\right.\\
%&\quad \left.+
%\frac{1}{4}\left[(1-x)(1+y)-
%\mi b_1(1-y)(x+1)\right]z\otimes p_1+
%p_2\otimes 1\right\}^2\\
%&=\left\{\frac{1}{4}\left[(1+x)(1+y)-
%\mi b_1(1-y)(x-1)\right]z\right\}^2\otimes p_2^2+\\
%&\quad +
%\left\{\frac{1}{4}\left[(1-x)(1+y)-
%\mi b_1(1-y)(x+1)\right]z\right\}^2\otimes p_1^2+
%p_2^2\otimes 1\\
%&=\frac{1}{2}(1+xy)\otimes p_2^2-\frac{\mi b_1}{2}(1-xy)\otimes p_1^2+ 
%p_2^2\otimes 1
%\end{align*}
there must exist a parameter $\lambda_1\in\mathbb{K}$ such that
$p_1^2=\lambda_1(1-xy)$ and
$p_2^2=\mi b_1 \lambda_1(1-xy)$.
\begin{align*}
\Delta(p_1 p_2)
%&=\left\{\frac{1}{4}\left[(1+x)(1+y)+
%\mi b_1(1-y)(x-1)\right]z\otimes p_1+\right.\\
%&\quad \left.+
%\frac{1}{4}\left[(1-x)(1+y)+
%\mi b_1(1-y)(x+1)\right]z\otimes p_2+ 
%p_1\otimes 1\right\}\times\\
%&\quad \times\left\{ \frac{1}{4}\left[(1+x)(1+y)-
%\mi b_1(1-y)(x-1)\right]z\otimes p_2+\right.\\
%&\quad \left.+
%\frac{1}{4}\left[(1-x)(1+y)-
%\mi b_1(1-y)(x+1)\right]z\otimes p_1+
%p_2\otimes 1\right\}\\
%&=\frac{1}{2}(x+y)\otimes p_1 p_2
%+\frac{\mi b_1}{2}(x-y)\otimes p_2 p_1+\\
%&\quad +\frac{1}{4}\left[(1+x)(1+y)+
%\mi b_1(1-y)(x-1)\right]zp_2\otimes p_1+\\
%&\quad +
%\frac{1}{4}\left[(1-x)(1+y)+
%\mi b_1(1-y)(x+1)\right]zp_2\otimes p_2+ \\
%&\quad + \frac{1}{4}p_1\left[(1+x)(1+y)-
%\mi b_1(1-y)(x-1)\right]z\otimes p_2+\\
%&\quad +
%\frac{1}{4}p_1\left[(1-x)(1+y)-
%\mi b_1(1-y)(x+1)\right]z\otimes p_1+
%p_1 p_2\otimes 1\\
%&=\frac{1}{2}(x+y)\otimes p_1 p_2
%+\frac{\mi b_1}{2}(x-y)\otimes p_2 p_1+\\
%&\quad +\frac{1}{4}p_2\left[(1+x)(1+y)+
%\mi b_1(1-y)(x-1)\right]z\otimes p_1+\\
%&\quad +
%\frac{1}{4}p_2\left[(1-x)(1+y)+
%\mi b_1(1-y)(x+1)\right]z\otimes p_2+ \\
%&\quad + \frac{1}{4}p_1\left[(1+x)(1+y)-
%\mi b_1(1-y)(x-1)\right]z\otimes p_2+\\
%&\quad +
%\frac{1}{4}p_1\left[(1-x)(1+y)-
%\mi b_1(1-y)(x+1)\right]z\otimes p_1+
%p_1 p_2\otimes 1\\
&=\frac{1}{2}(x+y)\otimes p_1 p_2
+\frac{\mi b_1}{2}(x-y)\otimes p_2 p_1+
p_1 p_2\otimes 1+
\\&\quad +
p_2\left[f_{00}-
\mi b_1f_{11}\right]z\otimes p_1+
%\\&\quad +
p_2\left[f_{10}+
\mi b_1f_{01}\right]z\otimes p_2+ \\
&\quad + p_1\left[f_{00}+
\mi b_1f_{11}\right]z\otimes p_2+
%\\&\quad +
p_1\left[f_{10}-
\mi b_1f_{01}\right]z\otimes p_1,
\end{align*}
%%%%%%%%%%%\Delta( p_2 p_1)%%%%%%%%%%%
\begin{align*}
\Delta( p_2 p_1)
%&=\left\{ \frac{1}{4}\left[(1+x)(1+y)-
%\mi b_1(1-y)(x-1)\right]z\otimes p_2+\right.\\
%&\quad \left.+
%\frac{1}{4}\left[(1-x)(1+y)-
%\mi b_1(1-y)(x+1)\right]z\otimes p_1+
%p_2\otimes 1\right\} \times\\
%& \quad \times\left\{\frac{1}{4}\left[(1+x)(1+y)+
%\mi b_1(1-y)(x-1)\right]z\otimes p_1+\right.\\
%&\quad \left.+
%\frac{1}{4}\left[(1-x)(1+y)+
%\mi b_1(1-y)(x+1)\right]z\otimes p_2+ 
%p_1\otimes 1\right\}\\
%&=\frac{1}{2}(x+y)\otimes p_2 p_1+
%\frac{\mi b_1}{2}(y-x)\otimes p_1 p_2+\\
%&\quad +  \frac{1}{4}\left[(1+x)(1+y)-
%\mi b_1(1-y)(x-1)\right]zp_1\otimes p_2+\\
%&\quad+
%\frac{1}{4}\left[(1-x)(1+y)-
%\mi b_1(1-y)(x+1)\right]zp_1\otimes p_1+\\
%& \quad +\frac{1}{4}p_2\left[(1+x)(1+y)+
%\mi b_1(1-y)(x-1)\right]z\otimes p_1+\\
%&\quad +
%\frac{1}{4}p_2\left[(1-x)(1+y)+
%\mi b_1(1-y)(x+1)\right]z\otimes p_2+ 
%p_2 p_1\otimes 1,\\
&=\frac{1}{2}(x+y)\otimes p_2 p_1+
\frac{\mi b_1}{2}(y-x)\otimes p_1 p_2+ 
p_2 p_1\otimes 1+\\
&\quad + \left[f_{00}+
\mi b_1f_{11}\right]zp_1\otimes p_2+
%\\&\quad+
\left[f_{10}-
\mi b_1f_{01}\right]zp_1\otimes p_1+\\
& \quad +p_2\left[f_{00}-
\mi b_1f_{11}\right]z\otimes p_1+
%\\&\quad +
p_2\left[f_{10}+
\mi b_1f_{01}\right]z\otimes p_2.
\end{align*}
Denote  $\Delta (p_1 p_2 )=B-A+E_1$ and 
$\Delta(p_2 p_1 )=B+A+E_2$, where 
\begin{align*}
A&=\left[f_{00}+
\mi b_1f_{11}\right]zp_1\otimes p_2+
\left[f_{10}-\mi b_1f_{01}\right]zp_1\otimes p_1,\\
B&=p_2\left[f_{00}-
\mi b_1f_{11}\right]z\otimes p_1+
p_2\left[f_{10}+\mi b_1f_{01}\right]z\otimes p_2,\\
E_2&=\frac{1}{2}(x+y)\otimes p_2 p_1+
\frac{\mi b_1}{2}(y-x)\otimes p_1 p_2+ 
p_2 p_1\otimes 1,\\
E_1&=\frac{1}{2}(x+y)\otimes p_1 p_2
+\frac{\mi b_1}{2}(x-y)\otimes p_2 p_1+
p_1 p_2\otimes 1.
\end{align*}
%%%%%%%%%%%%%%%%%A^2%%%%%%%%
We can obtain $A^2+B^2=0$, since 
\begin{align*}
A^2
%&=\left\{\frac{1}{4}\left[(1+x)(1+y)-
%\mi b_1(1-y)(x-1)\right]zp_1\right\}^2\otimes 
%p_2^2+\\
%&\quad+
%\left\{\frac{1}{4}\left[(1-x)(1+y)-
%\mi b_1(1-y)(x+1)\right]zp_1\right\}^2\otimes 
%p_1^2\\
%&=\frac{1}{16}\left[(1+x)(1+y)-
%\mi b_1(1-y)(x-1)\right]^2zp_1 zp_1\otimes 
%p_2^2+\\
%&\quad+
%\frac{1}{16}\left[(1-x)(1+y)-
%\mi b_1(1-y)(x+1)\right]\times \\
%&\quad \times \left[(1-y)(1+x)-
%\mi b_1(1-x)(y+1)\right]
%zp_1 zp_1\otimes 
%p_1^2\\
&=-\frac{1}{2}(x+y)p_1^2\otimes p_2^2
+\frac{\mi b_1}{2}(1-xy)p_1^2\otimes p_1^2
%\\&
=\mi b_1\lambda_1^2(1-xy)\otimes (1-xy),
\end{align*}
%%%%%%%%%%%%%B^2%%%%%%%%%%%%%
\begin{align*}
B^2
%&=\left\{\frac{1}{4}p_2\left[(1+x)(1+y)+
%\mi b_1(1-y)(x-1)\right]z\right\}^2\otimes p_1^2+\\
%&\quad +
%\left\{\frac{1}{4}p_2\left[(1-x)(1+y)+
%\mi b_1(1-y)(x+1)\right]z\right\}^2\otimes p_2^2\\
%&=\frac{1}{16}p_2\left[(1+x)(1+y)+
%\mi b_1(1-y)(x-1)\right]^2z^2p_2\otimes p_1^2+\\
%&\quad +
%\frac{1}{16}p_2\left[(1-x)(1+y)+
%\mi b_1(1-y)(x+1)\right]\times \\
%&\quad \times
%\left[(1-y)(1+x)+
%\mi b_1(1-x)(y+1)\right]
%z^2 p_2\otimes  p_2^2\\
%&=p_2 \frac{1}{2}(x+y) p_2
%\otimes p_1^2+
%p_2 \frac{\mi b_1}{2}(1-xy) p_2
%\otimes p_2^2\\
&=- \frac{1}{2}(x+y) p_2^2
\otimes p_1^2+
 \frac{\mi b_1}{2}(1-xy) p_2^2
\otimes p_2^2
%\\&
=-\mi b_1\lambda_1^2(1-xy)\otimes (1-xy).
\end{align*}
Keeping in mind that
\begin{align*}
p_1(p_1p_2+p_2p_1)&=(p_2p_1+p_1p_2)p_1,\quad 
p_2(p_1p_2+p_2p_1)=(p_2p_1+p_1p_2)p_2,\\
p_1(p_1p_2-p_2p_1)&=(p_2p_1-p_1p_2)p_1,\quad 
p_2(p_1p_2-p_2p_1)=(p_2p_1-p_1p_2)p_2,\\
(x+y)p_2(f_{00}&-\mi b_1f_{11})z=-p_2(f_{00}-\mi b_1f_{11})z(x+y),\\
(x-y)p_2(f_{00}&-\mi b_1f_{11})z=p_2(f_{00}-\mi b_1f_{11})z(x-y),\\
(x+y)p_2(f_{10}&+\mi b_1f_{01})z=-p_2(f_{10}+\mi b_1f_{01})z(x+y),\\
(x-y)p_2(f_{10}&+\mi b_1f_{01})z=p_2(f_{10}+\mi b_1f_{01})z(x-y),\\
(p_1p_2+p_2p_1)&p_2(f_{00}-\mi b_1f_{11})z=-p_2(f_{00}-\mi b_1f_{11})z(p_1p_2+p_2p_1),\\
(p_1p_2+p_2p_1)&p_2(f_{10}+\mi b_1f_{01})z=-p_2(f_{10}+\mi b_1f_{01})z(p_1p_2+p_2p_1),
\end{align*}
we deduce $B(E_1+E_2)
+(E_1+E_2)B=0$.  Similarly, we have  $A(E_2-E_1)+(E_2-E_1)A=0$.
\begin{align*}
&\quad \Delta(p_1 p_2 p_1 p_2
+p_2 p_1 p_2 p_1)
=(B-A+E_1)^2+(B+A+E_2)^2\\
&=2(A^2+B^2)+B(E_1+E_2)
+(E_1+E_2)B+A(E_2-E_1)+(E_2-E_1)A+E_1^2+E_2^2\\
&=E_1^2+E_2^2=\left(
\frac{1}{2}(x+y)\otimes p_1 p_2
+\frac{\mi b_1}{2}(x-y)\otimes p_2 p_1+
p_1 p_2\otimes 1\right)^2+\\
&\quad +\left(
\frac{1}{2}(x+y)\otimes p_2 p_1+
\frac{\mi b_1}{2}(y-x)\otimes p_1 p_2+ 
p_2 p_1\otimes 1\right)^2\\
&=\frac{1}{2}(1+xy)\otimes \left(p_1 p_2\right)^2
-\frac{1}{2}(1-xy)\otimes \left(p_2 p_1\right)^2
+\left(p_1 p_2\right)^2\otimes 1+\\
&\quad+
\frac{1}{2}(1+xy)\otimes \left(p_2 p_1\right)^2
-\frac{1}{2}(1-xy)\otimes \left(p_1 p_2\right)^2
+\left(p_2 p_1\right)^2\otimes 1\\
&=xy\otimes \left[\left(p_1 p_2\right)^2+\left(p_2 p_1\right)^2\right]
+\left[\left(p_1 p_2\right)^2+\left(p_2 p_1\right)^2\right]\otimes 1.
\end{align*}
So there exists a parameter $\lambda_2\in\mathbb{K}$ such that 
$p_1p_2p_1p_2+p_2p_1p_2p_1=\lambda_2(1-xy)$.

We have
$H\simeq \left[T(W^{b_1,-1})\#H_8\right]\Big/{\mathcal{I}(\lambda_1,\lambda_2)}$, because $\mathrm{gr}\,\left\{\left[T(W^{b_1,-1})\#H_8\right]\Big/{\mathcal{I}(\lambda_1,\lambda_2)}\right\}\simeq \mathfrak{B}\left(W^{b_1,-1}\right)\#H_8$.
\end{proof}

\begin{definition}\label{Definition:HopfAlgA_7}
For a set  of  parameters $I_7=\{\lambda_j\in \mathbb{K}|j=1,\cdots, 5\}$, denote by $\mathfrak{A}_7(I_7)$ the algebra generated by $x$, $y$, $z$, 
$p_1$, $p_2$, $q_1$, $q_2$ satisfying the following relations
\begin{eqnarray}
x^2=y^2=1,\quad z^2=\frac{1}{2}(1+x+y-xy), \\
xy=yx,\quad zx=yz,\quad zy=xz,\\
xp_1=p_1x,\quad  yp_1=p_1y,\quad
xp_2=-p_2x,\quad  yp_2=-p_2y,\\ 
zp_1=-p_1 z,\quad 
zp_2=\mi  p_2 xz,\\
p_1^2=\lambda_1(1-xy), \quad p_2^2=\mi \lambda_1(1-xy),
\label{ideal:TwoW(pm 1,-1)1}\\
p_1p_2p_1p_2+p_2p_1p_2p_1=\lambda_2(1-xy),
\label{ideal:TwoW(pm 1,-1)2}\\
xq_1=q_1x,\quad  yq_1=q_1y,\quad
xq_2=-q_2x,\quad  yq_2=-q_2y,\\ 
zq_1=-q_1 z,\quad 
zq_2=-\mi  q_2 xz,\\
q_1^2=\lambda_3(1-xy), \quad q_2^2=-\mi \lambda_3(1-xy),
\label{ideal:TwoW(pm 1,-1)3}\\
q_1q_2q_1q_2+q_2q_1q_2q_1=\lambda_4(1-xy),
\label{ideal:TwoW(pm 1,-1)4}\\
p_1q_2+q_2p_1=0,\quad p_2q_1+q_1p_2=0,
\label{ideal:TwoW(pm 1,-1)5}\\
p_1q_1+q_1p_1=\lambda_5(x+y-2),\quad 
p_2q_2-q_2p_2=-\mi \lambda_5(x-y).
\label{ideal:TwoW(pm 1,-1)6}
\end{eqnarray}
 It is a Hopf algebra with its structure determined by 
 \begin{align}
 \Delta(p_1)&=\left[f_{00}-
\mi f_{11}\right]z\otimes p_1+
\left[f_{10}+\mi f_{01}\right]z\otimes p_2+ 
p_1\otimes 1,\\
\Delta(p_2)&=\left[f_{00}+
\mi f_{11}\right]z\otimes p_2+
\left[f_{10}-\mi f_{01}\right]z\otimes p_1+p_2\otimes 1,\\
\Delta(q_1)&=\left[f_{00}+\mi f_{11}\right]z\otimes q_1+
\left[f_{10}-\mi f_{01}\right]z\otimes q_2+ 
q_1\otimes 1,\\
\Delta(q_2)&=\left[f_{00}-\mi f_{11}\right]z\otimes q_2+
\left[f_{10}+\mi f_{01}\right]z\otimes q_1+q_2\otimes 1,\\
\Delta(x)&=x\otimes x,\quad \Delta(y)=y\otimes y,
\quad \Delta(z)=\frac{1}{2}[(1+y)z\otimes z+(1-y)z\otimes xz].
\end{align}
\end{definition}
\begin{remark}
In fact, $\mathfrak{A}_7(I_7)\simeq\left[T\left(W^{1,-1}\oplus W^{-1,-1}\right) \# H_8\right]\Big/{\mathcal{I}(I_7)}$,  where $\mathcal{I}(I_7)$ is a Hopf ideal generated by relations \eqref{ideal:TwoW(pm 1,-1)1}, 
\eqref{ideal:TwoW(pm 1,-1)2}, \eqref{ideal:TwoW(pm 1,-1)3}, 
\eqref{ideal:TwoW(pm 1,-1)4}, \eqref{ideal:TwoW(pm 1,-1)5}
and \eqref{ideal:TwoW(pm 1,-1)6}.
Since \begin{align*}
p_1p_2p_1p_2p_1&=p_1[\lambda_2(1-xy)-p_1p_2p_1p_2]=
[\lambda_2p_1-\lambda_1p_2p_1p_2](1-xy),\\
p_2p_1p_2p_1p_2&=p_2[\lambda_2(1-xy)-p_2p_1p_2p_1]=
[\lambda_2p_2-\lambda_1\mi p_1p_2p_1](1-xy),
\end{align*}
$\dim\left<1, p_1, p_2\right><\infty$ for the subalgebra $\left<1, p_1, p_2\right>$ generated by $1$, $p_1$, $p_2$. Similarly, we have $\dim\left<1, q_1, q_2\right><\infty$. We can deduce  that $$\dim A_7(I_7)=\dim\left<1, p_1, p_2\right> \dim\left<1, q_1, q_2\right> \dim H_8<\infty. $$
\end{remark}
\begin{proposition}\label{HopfAlg:A_7}
Suppose $H$ is a finite  dimensional Hopf algebra with coradical $H_8$ such that its infinitesimal braiding  is isomorphic to $\Omega_7$, then $H\simeq \mathfrak{A}_7(I_7)$.
\end{proposition}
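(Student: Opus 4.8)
The plan is to follow the template of Propositions \ref{HopfAlge:A_1} and \ref{HopfAlg:A_6}: reduce to generation in degree one, lift the defining relations of $\mathfrak{B}(\Omega_7)$ family by family, and match associated graded algebras at the end. By Theorem \ref{generatedByDegreeOne} we have $\mathrm{gr}\,H\simeq\mathfrak{B}(\Omega_7)\#H_8$, so $H$ is generated by $x,y,z$ together with degree-one lifts of the infinitesimal braiding, which I take to be $p_1,p_2\in W^{1,-1}\#1$ and $q_1,q_2\in W^{-1,-1}\#1$ in the notation of Lemma \ref{HopfAlg:W(b_1,-1)}. It then suffices to establish relations \eqref{ideal:TwoW(pm 1,-1)1}--\eqref{ideal:TwoW(pm 1,-1)6} and to check that the resulting quotient of $T(W^{1,-1}\oplus W^{-1,-1})\#H_8$ has the correct associated graded algebra.

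First I would treat the relations internal to each $A_2$-block. The subalgebra generated by $x,y,z,p_1,p_2$ is a finite-dimensional Hopf algebra whose infinitesimal braiding is $W^{1,-1}$, so Lemma \ref{HopfAlg:W(b_1,-1)} (with $b_1=1$) produces parameters $\lambda_1,\lambda_2$ and relations \eqref{ideal:TwoW(pm 1,-1)1}, \eqref{ideal:TwoW(pm 1,-1)2}. Applying the same lemma to $x,y,z,q_1,q_2$ (with $b_1=-1$), or transporting the previous computation through the automorphism $\tau_3$ by Corollary \ref{Isomorphism:B(V)H_8}, yields $\lambda_3,\lambda_4$ and \eqref{ideal:TwoW(pm 1,-1)3}, \eqref{ideal:TwoW(pm 1,-1)4}.

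The core of the argument is the cross relations \eqref{ideal:TwoW(pm 1,-1)5} and \eqref{ideal:TwoW(pm 1,-1)6}. By Proposition \ref{NicholsAlg:TensorOfTwoSimpleObjects}\eqref{NicholsAlg:Tensor6} we have $\mathfrak{B}(\Omega_7)\simeq\mathfrak{B}(W^{1,-1})\otimes\mathfrak{B}(W^{-1,-1})$, so $c_{W^{1,-1},W^{-1,-1}}c_{W^{-1,-1},W^{1,-1}}=\mathrm{id}$; reading off the cross-braiding in the $\{p_i\},\{q_j\}$ bases identifies the four braided (anti)commutators
$$s_1=p_1q_2+q_2p_1,\quad s_2=p_2q_1+q_1p_2,\quad r_1=p_1q_1+q_1p_1,\quad r_2=p_2q_2-q_2p_2,$$
each of which has zero image in $\mathrm{gr}\,H$ and hence lies in $F_1H$, the first term of the coradical filtration. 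Using the actions in \eqref{HopfAlg:W(b_1,-1)0}, \eqref{HopfAlg:W(b_1,-1)1} and the corresponding ones for $q_1,q_2$, I would first record that $s_1,s_2$ anticommute with $x$ and $y$, that $r_1$ commutes with $x,y,z$, and that $r_2$ commutes with $x,y$ but anticommutes with $z$. Then, exactly as $\Delta(p_1p_2)$ was computed in Lemma \ref{HopfAlg:W(b_1,-1)}, I would compute $\Delta$ of each of $s_1,s_2,r_1,r_2$ from \eqref{Delta:W(b_1-1)p1}, \eqref{Delta:W(b_1-1)p2}; the contractions $zf_{jk}z$ collapse via $z^2=\tfrac12(1+x+y-xy)$ and $zf_{10}=f_{01}z$ to the combinations $\tfrac12(1\pm xy)$ and $\tfrac12(x\pm y)$ in $\mathbb{K}[x,y]$, putting the coproducts into skew-primitive form modulo $H_8$.

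The part I expect to be the real obstacle is extracting the precise lifting data from these coproducts. For $s_1$ and $s_2$ the skew-primitive component available in the relevant group-like sector is incompatible with the anticommutation with $x,y$, which should force $s_1=s_2=0$, just as the term $p_2q_1+q_1p_2$ was killed in Proposition \ref{HopfAlg:A_6}. For $r_1$ and $r_2$ the computation instead produces a genuine parameter: matching $\Delta(r_1)$ against the coproduct of a central element of $\mathbb{K}[x,y]$ forces $r_1=\lambda_5(x+y-2)$, and the off-diagonal term of that same coproduct, proportional to $\tfrac12(x-y)\otimes r_2$ and reflecting the identity $\Delta(x+y-2)=(x+y-2)\otimes1+\tfrac12(x+y)\otimes(x+y-2)+\tfrac12(x-y)\otimes(x-y)$, ties $r_2$ to the \emph{same} scalar and pins down the coefficient $-\mi$, giving $r_2=-\mi\lambda_5(x-y)$. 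This coproduct coupling plays here the role that the intertwining $z(p_1q_1+q_1p_1)=(p_2q_2+q_2p_2)z$ played in Proposition \ref{HopfAlg:A_6}; note that the $z$-action cannot do the job directly now, since $r_1$ commutes while $r_2$ anticommutes with $z$. Once all of \eqref{ideal:TwoW(pm 1,-1)1}--\eqref{ideal:TwoW(pm 1,-1)6} are in hand, I conclude as in the Remark after Definition \ref{Definition:HopfAlgA_7}: the quotient $[T(W^{1,-1}\oplus W^{-1,-1})\#H_8]/\mathcal{I}(I_7)$ has associated graded $\mathfrak{B}(\Omega_7)\#H_8\simeq\mathrm{gr}\,H$, whence $H\simeq\mathfrak{A}_7(I_7)$.
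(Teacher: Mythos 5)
Your proposal is correct and follows essentially the same route as the paper's proof: reduce via Theorem \ref{generatedByDegreeOne}, import the block relations \eqref{ideal:TwoW(pm 1,-1)1}--\eqref{ideal:TwoW(pm 1,-1)4} from Lemma \ref{HopfAlg:W(b_1,-1)}, kill $p_1q_2+q_2p_1$ and $p_2q_1+q_1p_2$ by confronting their coproducts with the anticommutation with $x$, and use the coupled coproducts $\Delta(p_1q_1+q_1p_1)$, $\Delta(p_2q_2-q_2p_2)$ (whose off-diagonal terms in $\frac{\mi(x-y)}{2}$ tie the two scalars together, exactly your $\Delta(x+y-2)$ identity) to force \eqref{ideal:TwoW(pm 1,-1)6}, before concluding by matching associated graded algebras. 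Your observation that the $z$-relations $zr_1=r_1z$, $zr_2=-r_2z$ cannot by themselves couple the two constants, so the coupling must come from the coproduct, is precisely what the paper's displayed formulas accomplish.
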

\begin{proof}
By Theorem \ref{generatedByDegreeOne}, we have $\mathrm{gr}\,H\simeq \mathfrak{B}(\Omega_7)\#H_8$. We can suppose $H$ is generated by $x$, $y$, $z$, 
$p_1$, $p_2$, $q_1$, $q_2$ with $x$, $y$, $z\in H_8$ and 
 \begin{align}
p_1&=\left(w_1^{1,-1}+\mi w_2^{1,-1}\right)\#1, 
&p_2&=\left(w_1^{1,-1}-\mi w_2^{1,-1}\right)\#1,\\ 
q_1&=\left(w_1^{-1,-1}-\mi w_2^{-1,-1}\right)\#1,  
&q_2&=\left(w_1^{-1,-1}+\mi w_2^{-1,-1}\right)\#1.
\end{align}
By lemma \ref{HopfAlg:W(b_1,-1)}, we only need to prove that 
\eqref{ideal:TwoW(pm 1,-1)5}
and \eqref{ideal:TwoW(pm 1,-1)6} hold in $H$.
It's only possible for  $p_1q_2+q_2p_1=0$,  $p_2q_1+q_1p_2=0$, since $x(p_1q_2+q_2p_1)=-(p_1q_2+q_2p_1)x$, $
x(p_2q_1+q_1p_2)=-(p_2q_1+q_1p_2)x$, and 
\begin{align*}
\Delta(p_1q_2+q_2p_1)
&=\frac{1}{2}\left[(1+xy)+\mi (1-xy)\right]\otimes (p_1q_2+q_2p_1)+
(p_1q_2+q_2p_1)\otimes 1,\\
\Delta(p_2q_1+q_1p_2)
&=\frac{1}{2}\left[(1+xy)-\mi(1-xy)\right]\otimes (p_2q_1+q_1p_2)+
(p_2q_1+q_1p_2)\otimes 1,
\end{align*}
%so $p_1q_2+q_2p_1=0$.
%\begin{align*}
%&\quad\Delta(p_2q_1+q_1p_2)\\
%&=
%\left[(f_{00}+\mi f_{11})z\otimes p_2+
%(f_{10}-\mi f_{01})z\otimes p_1+p_2\otimes 1\right]\times \\
%&\quad\times \left[(f_{00}+\mi f_{11})z\otimes q_1+
%(f_{10}-\mi f_{01})z\otimes q_2+q_1\otimes 1\right]+\\
%&\quad + \left[(f_{00}+\mi f_{11})z\otimes q_1+
%(f_{10}-\mi f_{01})z\otimes q_2+q_1\otimes 1\right]\times \\
%&\quad \times\left[(f_{00}+\mi f_{11})z\otimes p_2+
%(f_{10}-\mi f_{01})z\otimes p_1+p_2\otimes 1\right]\\
%&=\frac{1}{2}(1+xy)\otimes (p_2q_1+q_1p_2)+
%(p_2q_1+q_1p_2)\otimes 1-\frac{\mi}{2}(1-xy)\otimes 
%(p_2q_1+q_1p_2)\\
%&=\frac{1}{2}\left[(1+xy)-\mi(1-xy)\right]\otimes (p_2q_1+q_1p_2)+
%(p_2q_1+q_1p_2)\otimes 1,
%\end{align*}
%so $p_2q_1+q_1p_2=0$.
Similarly, we can get  \eqref{ideal:TwoW(pm 1,-1)6}, since 
\begin{align*}
z(p_1q_1+q_1p_1)&=(p_1q_1+q_1p_1)z, \quad 
z(p_2q_2-q_2p_2)=-(p_2q_2-q_2p_2)z,\\
\Delta(p_1q_1+q_1p_1)
%\\&=(f_{00}-\mi f_{11})z(f_{00}+\mi f_{11})z
%\otimes (p_1q_1+q_1p_1)+ (p_1q_1+q_1p_1)\otimes 1+\\
%&\quad +(f_{10}+\mi f_{01})z(f_{10}-\mi f_{01})z\otimes p_2q_2+
%(f_{10}-\mi f_{01})z(f_{10}+\mi f_{01})z\otimes q_2p_2\\
%&=(f_{00}+ f_{11})z^2
%\otimes (p_1q_1+q_1p_1)+ (p_1q_1+q_1p_1)\otimes 1+\\ 
%&\quad +(f_{10}+\mi f_{01})(f_{01}-\mi f_{10})z^2\otimes p_2q_2+
%(f_{10}-\mi f_{01})(f_{01}+\mi f_{10})z^2\otimes q_2p_2\\
%&=(f_{00}-f_{11})
%\otimes (p_1q_1+q_1p_1)+ (p_1q_1+q_1p_1)\otimes 1+\\ 
%&\quad +(-\mi f_{10}+\mi f_{01})z^2\otimes p_2q_2+
%(\mi f_{10}-\mi f_{01})z^2\otimes q_2p_2\\
&=\frac{x+y}{2}
\otimes (p_1q_1+q_1p_1)+ (p_1q_1+q_1p_1)\otimes 1
+\frac{\mi(x-y)}{2}\otimes (p_2q_2-
 q_2p_2),\\
 \Delta(p_2q_2-q_2p_2)
 &=\frac{x+y}{2}\otimes (p_2q_2-q_2p_2)+(p_2q_2-q_2p_2)\otimes 1-\frac{\mi(x-y)}{2}\otimes 
(p_1q_1+q_1p_1).
\end{align*}
%\begin{align*}
%&\quad\Delta(p_2q_2-q_2p_2)\\
%&=\left[(f_{00}+\mi f_{11})z\otimes p_2+
%(f_{10}-\mi f_{01})z\otimes p_1+p_2\otimes 1
%\right] \times \\
%&\quad\times \left[(f_{00}-\mi f_{11})z\otimes q_2+
%(f_{10}+\mi f_{01})z\otimes q_1+q_2\otimes 1
%\right]-\\
%&\quad-\left[(f_{00}-\mi f_{11})z\otimes q_2+
%(f_{10}+\mi f_{01})z\otimes q_1+q_2\otimes 1\right]\times \\
%&\quad\times\left[ (f_{00}+\mi f_{11})z\otimes p_2+
%(f_{10}-\mi f_{01})z\otimes p_1+p_2\otimes 1\right]\\
%&=\frac{1}{2}(x+y)\otimes (p_2q_2-q_2p_2)+(p_2q_2-q_2p_2)\otimes 1+\\
%&\quad +\mi(f_{10}-f_{01})z^2\otimes p_1q_1-
%\mi (-f_{10}+f_{01})z^2\otimes q_1p_1\\
%&=\frac{1}{2}(x+y)\otimes (p_2q_2-q_2p_2)+(p_2q_2-q_2p_2)\otimes 1-\frac{\mi}{2}(x-y)\otimes 
%(p_1q_1+q_1p_1).
%\end{align*}
%Let $p_1q_1+q_1p_1=\alpha_1x+\beta_1y+\gamma_1$ and 
%$p_2q_2-q_2p_2=\alpha_2x+\beta_2y+\gamma_2$, then 
%\begin{align*}
%\alpha_2=-\mi \alpha_1,\quad \beta_2=\mi\beta_1,
%\quad \gamma_1=-(\alpha_1+\beta_1),\quad
% \gamma_2=\mi (\alpha_1-\beta_1).
%\end{align*}
%$z(p_1q_1+q_1p_1)=(p_1q_1+q_1p_1)z\Rightarrow \alpha_1=\beta_1$. So there exists a parameter $\lambda_5$ in $\mathbb{K}$
%such that 
%\begin{equation*}
%p_1q_1+q_1p_1=\lambda_5(x+y-2),\quad 
%p_2q_2-q_2p_2=-\mi \lambda_5(x-y).
%\end{equation*}
We have $H\simeq  \mathfrak{A}_7(I_7)$,  because $\mathrm{gr}\,\{\left[T\left(\Omega_7\right) \# H_8\right]\big/\mathcal{I}(I_7)\}\simeq \mathfrak{B}(\Omega_7)\#H_8$.
\end{proof}

\begin{definition}\label{Definition:HopfAlgA_4}
For a set  of  parameters $I_4=\{\lambda_1, \lambda_2, \lambda_{j,k}\in \mathbb{K}|j=1,\cdots, n_1, k=1,\cdots, n_2\}$,
denote by $\mathfrak{A}_4(n_1,n_2;I_4)$ the algebra generated by $x$, $y$, $z$, $p_1$, $p_2$, 
 $\{X_j\}_{j=1,\cdots, n_1}$,  $\{Y_k\}_{k=1,\cdots, n_2}$ satisfying the following relations
  \begin{align}
  x^2=y^2=1,\quad z^2=\frac{1}{2}(1+x+y-xy), \label{formulae:A_4_1}\\
xy=yx,\quad zx=yz,\quad zy=xz,\\
xp_1=p_1x,\quad  yp_1=p_1y,\quad
xp_2=-p_2x,\quad  yp_2=-p_2y,\label{FGK1}\\ 
zp_1=-p_1 z,\quad 
zp_2=\mi  p_2 xz,\\
p_1^2=\lambda_1(1-xy), \quad p_2^2=\mi \lambda_1(1-xy),
\label{ideal:xyW(b1,-1)1}\\
p_1p_2p_1p_2+p_2p_1p_2p_1=\lambda_2(1-xy),\label{ideal:xyW(b1,-1)2}\\
  xX_j=-X_jx,\quad yX_j=-X_jy,\quad zX_j=\mi X_j xz,\\
  xY_k=-Y_kx,\quad yY_k=-Y_ky,\quad zY_k=\mi Y_k xz,\\
  X_{j_1}^2=0,\quad X_{j_1}X_{j_2}+X_{j_2}X_{j_1}=0,\quad j_1,j_2\in\{1,\cdots,n_1\},\label{ideal:xyW(b1,-1)3}\\
  Y_{k_1}^2=0,\quad Y_{k_1}Y_{k_2}+Y_{k_2}Y_{k_1}=0,\quad  k_1,k_2\in\{1,\cdots, n_2\},\label{ideal:xyW(b1,-1)4}\\
  X_jY_k+Y_kX_j=\lambda_{j,k}(1-xy), \label{ideal:xyW(b1,-1)5}
 \end{align}
 \begin{align}
p_1Y_k-Y_kp_1=0
, \quad
p_2Y_k+Y_kp_2=0,\quad
p_1X_j-X_jp_1=0, \quad
p_2X_j+X_jp_2=0.\label{ideal:xyW(b1,-1)6}
\end{align}
 It is a Hopf algebra with its structure determined by 
 \begin{align}
 \Delta(X_j)&=X_j\otimes 1+x\otimes X_j, \quad 
 \Delta(Y_k)=Y_k\otimes 1+y\otimes Y_k,  \label{coproduct:A_4_1}\\
\Delta(p_1)&=(f_{00}-\mi  f_{11})z\otimes p_1+
(f_{10}+\mi  f_{01})z\otimes p_2+p_1\otimes 1,\\
\Delta(p_2)&=(f_{00}+\mi  f_{11})z\otimes p_2+
(f_{10}-\mi  f_{01})z\otimes p_1+p_2\otimes 1,\\
\Delta(x)&=x\otimes x,\quad \Delta(y)=y\otimes y,
\quad \Delta(z)=\frac{1}{2}[(1+y)z\otimes z+(1-y)z\otimes xz].
\label{coproduct:A_4_2}
\end{align}
\end{definition}
\begin{remark}
We can observe  that 
$$\dim \mathfrak{A}_4(n_1,n_2;I_4)=\dim\left<1,p_1,p_2\right>
\dim\left<1, \{X_j\}_{j=1,\cdots, n_1}  \right>\dim\left<1, \{Y_k\}_{k=1,\cdots, n_2}\right>\dim H_8<\infty$$
for subalgebra $\left<1,p_1,p_2\right>$ generated by $1$, $p_1$, $p_2$, subalgebra $\left<1, \{X_j\}_{j=1,\cdots, n_1}  \right>$ generated by $1$, $\{X_j\}_{j=1,\cdots, n_1}$, and subalgebra $\left<1, \{Y_k\}_{k=1,\cdots, n_2}\right>$ generated by $1$, $\{Y_k\}_{k=1,\cdots, n_2}$.

In fact, $\mathfrak{A}_4(n_1,n_2;I_4)\simeq T[\Omega_4(n_1,n_2)]\#H_8/{\mathcal{I}(I_4)}$, where $\mathcal{I}(I_4)$ is a Hopf ideal genereated by relations 
 \eqref{ideal:xyW(b1,-1)1}, \eqref{ideal:xyW(b1,-1)2}, \eqref{ideal:xyW(b1,-1)3}, \eqref{ideal:xyW(b1,-1)4}, \eqref{ideal:xyW(b1,-1)5}, \eqref{ideal:xyW(b1,-1)6}.
\end{remark}

\begin{proposition}\label{HopfAlg:Omega_4}
Suppose $H$ is a finite  dimensional Hopf algebra with coradical $H_8$ such  that its infinitesimal braiding  is isomorphic to $\Omega_4(n_1,n_2)$, then $H\simeq \mathfrak{A}_4(n_1,n_2; I_4)$.  
\end{proposition}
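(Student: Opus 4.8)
The plan is to follow the lifting scheme already used in Propositions \ref{HopfAlge:A_1}, \ref{HopfAlg:Omega2} and \ref{HopfAlg:A_7}. First I would invoke Theorem \ref{generatedByDegreeOne} to get $\mathrm{gr}\,H\simeq \mathfrak{B}(\Omega_4(n_1,n_2))\#H_8$, so that $H$ is generated in degree one and we may lift the canonical generators of $\mathfrak{B}(\Omega_4(n_1,n_2))$ to elements $p_1=(w_1^{1,-1}+\mi w_2^{1,-1})\#1$, $p_2=(w_1^{1,-1}-\mi w_2^{1,-1})\#1$ coming from $W^{1,-1}$, together with $X_j=(v\boxtimes x)\#1\in M\langle\mi,x\rangle\#1$ and $Y_k=(v\boxtimes y)\#1\in M\langle\mi,y\rangle\#1$. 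The relations \eqref{formulae:A_4_1}--\eqref{FGK1} together with the coproduct formulae \eqref{coproduct:A_4_1}--\eqref{coproduct:A_4_2} that do not belong to the defining ideal $\mathcal{I}(I_4)$ then hold automatically in the bosonization.

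Next I would harvest the \emph{internal} lifting relations from the cases already proven. The relations \eqref{ideal:xyW(b1,-1)1} and \eqref{ideal:xyW(b1,-1)2} governing the pair $(p_1,p_2)$ are exactly the content of Lemma \ref{HopfAlg:W(b_1,-1)} specialized to $b_1=1$, and so carry the parameters $\lambda_1,\lambda_2$. The relations \eqref{ideal:xyW(b1,-1)3} and \eqref{ideal:xyW(b1,-1)4} for the families $\{X_j\}$ and $\{Y_k\}$ are obtained exactly as in Proposition \ref{HopfAlge:A_1}: each $X_j$ is $(x,1)$-skew-primitive and each $Y_k$ is $(y,1)$-skew-primitive, so $X_j^2$, $X_{j_1}X_{j_2}+X_{j_2}X_{j_1}$ and their $Y$-analogues are genuine $1$-primitives on which the braiding is a flip, forcing them to vanish in the finite-dimensional $H$.

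The heart of the argument is the cross-relations. For the $A_1\times A_1$ piece I would compute $\Delta(X_jY_k+Y_kX_j)$; since $\rho(X_j)=x\otimes X_j$ and $\rho(Y_k)=y\otimes Y_k$, this element is $(xy,1)$-skew-primitive, and being zero in $\mathrm{gr}\,H$ it must lie in $H_8$ together with the degree-one part, so the only available grouplike-homogeneous contribution is a multiple of $1-xy$, yielding \eqref{ideal:xyW(b1,-1)5} with the free parameters $\lambda_{j,k}$. For the interaction between the $A_2$-block $(p_1,p_2)$ and the $A_1$-generators I would examine the four combinations $p_1X_j-X_jp_1$, $p_2X_j+X_jp_2$, $p_1Y_k-Y_kp_1$, $p_2Y_k+Y_kp_2$ exactly as in Proposition \ref{HopfAlg:Omega2}. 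Using $\Delta(p_1),\Delta(p_2)$ from \eqref{Delta:W(b_1-1)p1}--\eqref{Delta:W(b_1-1)p2} one finds, just as in the computation $(f_{10}-\mi f_{01})z\cdot(p_1X_j-X_jp_1)=0$ and $(f_{00}+\mi f_{11})z\cdot(p_1X_j-X_jp_1)=p_1X_j-X_jp_1$ already carried out inside Theorem \ref{generatedByDegreeOne}, that each of these four combinations is rigid: its $H_8$-module structure (anticommutation with $x$ and the $z$-eigenvalue) is incompatible with any nonzero skew-primitive of $H_8$, so all four must equal $0$, giving \eqref{ideal:xyW(b1,-1)6}.

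Finally I would check that the relations \eqref{ideal:xyW(b1,-1)1}--\eqref{ideal:xyW(b1,-1)6} generate a Hopf ideal $\mathcal{I}(I_4)$ and that $\mathrm{gr}\bigl\{T[\Omega_4(n_1,n_2)]\#H_8/\mathcal{I}(I_4)\bigr\}\simeq \mathfrak{B}(\Omega_4(n_1,n_2))\#H_8$, so that no further relations can appear and $H\simeq\mathfrak{A}_4(n_1,n_2;I_4)$. I expect the main obstacle to be precisely the cross-relations between the $A_2$-block and the $A_1$-generators: the coproducts of $p_iX_j\mp X_jp_i$ mix the non-grouplike terms $f_{jk}z$ appearing in $\Delta(p_i)$ with the grouplike $x$ carried by $X_j$, and one must verify carefully—via the explicit $z$-action on these brackets—that these elements admit \emph{no} nonzero skew-primitive lift, so that \eqref{ideal:xyW(b1,-1)6} is forced rather than parametrized, in contrast with the genuinely free parameters $\lambda_1,\lambda_2,\lambda_{j,k}$.
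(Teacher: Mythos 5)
Your overall scheme --- reduce via Theorem \ref{generatedByDegreeOne}, import the internal relations of the $A_2$-block from Lemma \ref{HopfAlg:W(b_1,-1)} and of the $A_1$-generators from Proposition \ref{HopfAlge:A_1}, and lift $X_jY_k+Y_kX_j$ to $\lambda_{j,k}(1-xy)$ --- agrees with the paper. But the step you yourself call the heart of the argument contains a genuine gap: your claim that the four brackets $p_1X_j-X_jp_1$, $p_2X_j+X_jp_2$, $p_1Y_k-Y_kp_1$, $p_2Y_k+Y_kp_2$ are \emph{rigid}, i.e.\ that their $H_8$-module and comodule structure excludes any nonzero value in degree at most one, is false. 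Because $\Delta(p_1),\Delta(p_2)$ involve the non-grouplike coefficients $f_{jk}z$, each pair of brackets satisfies a coupled comultiplication identity rather than ordinary skew-primitivity, and that identity \emph{does} admit nonzero solutions inside $H_8$: the paper exhibits the one-parameter family
\begin{align*}
p_1Y_k-Y_kp_1&=-\mu_k\left(-f_{10}+\mi b_1 f_{01}\right)z, &
p_2Y_k+Y_kp_2&=\mu_k\left(1-(f_{00}-\mi b_1f_{11})z\right),
\end{align*}
and the analogous family with parameter $\mu_j^\prime$ for the $X_j$'s. These candidates pass every test you propose; for instance, for $r_1=(f_{10}-\mi b_1f_{01})z$ one checks, using $zf_{10}=f_{01}z$, $zf_{01}=f_{10}z$, $zx=yz$, $f_{10}y=f_{10}$, $f_{01}y=-f_{01}$, that
\begin{equation*}
zr_1=(f_{01}-\mi b_1f_{10})z^2=-b_1\mi\, r_1xz,
\qquad xr_1=-r_1x,
\end{equation*}
so neither the $x$-anticommutation nor the $z$-action rules them out. (Contrast with Proposition \ref{HopfAlg:Omega2}, which you cite as a model: there $p_1,p_2$ come from $M\langle(xy,x)\rangle$ and have grouplike coaction, so the rigidity argument really does work; it does not transfer to the $W^{1,-1}$-block.)

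Consequently your proof cannot be completed as written: after the module/comodule analysis you are left with free parameters $\mu_j^\prime,\mu_k$, not with the relations \eqref{ideal:xyW(b1,-1)6}. The missing idea, which is the crux of the paper's proof, is to kill these parameters by an algebraic consistency argument: since $X_jY_k+Y_kX_j=\lambda_{j,k}(1-xy)$ is already established and $p_1$ commutes with $1-xy$, comparing $p_1(X_jY_k+Y_kX_j)$ with $(X_jY_k+Y_kX_j)p_1$, expanded via the one-parameter expressions above, forces $\mu_j^\prime=\mu_k=0$. Only after this step do the relations \eqref{ideal:xyW(b1,-1)6} hold, and then the conclusion $H\simeq \mathfrak{A}_4(n_1,n_2;I_4)$ follows, as you indicate, from $\mathrm{gr}\,\left\{T[\Omega_4(n_1,n_2)]\#H_8/{\mathcal{I}(I_4)}\right\}\simeq \mathfrak{B}[\Omega_4(n_1,n_2)]\#H_8$.
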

\begin{proof}
By Theorem \ref{generatedByDegreeOne}, we have $\mathrm{gr}\,H\simeq \mathfrak{B}[\Omega_4(n_1,n_2)]\#H_8$. We can suppose $H$ is generated by $x$, $y$, $z$, 
$p_1$, $p_2$, $\{X_j\}_{j=1,\cdots, n_1}$,  $\{Y_k\}_{k=1,\cdots, n_2}$ with $x$, $y$, $z\in H_8$ and 
\begin{align}
p_1=\left(w_1^{b_1,-1}+\mi b_1w_2^{b_1,-1}\right)\#1, \quad 
p_2=\left(w_1^{b_1,-1}-\mi b_1w_2^{b_1,-1}\right)\#1,\\
X_j=(v\boxtimes x)\#1,  \quad j=1,\cdots, n_1,\\
Y_k= (v\boxtimes y)\#1, \quad v\in V_1(\mi),\quad k=1,\cdots, n_2.
\end{align}
As similarly proved in Proposition \ref{HopfAlge:A_1} and Lemma 
\ref{HopfAlg:W(b_1,-1)}, formulae 
\eqref{formulae:A_4_1}--\eqref{ideal:xyW(b1,-1)5} and \eqref{coproduct:A_4_1}--\eqref{coproduct:A_4_2} hold in $H$.
Since $r=0$ in $\mathrm{gr}\,H$ for $r=p_1Y_k-Y_kp_1$ and $p_2Y_k+Y_kp_2$, $r$ is an element of  at most degree one. It's only possible for 
\begin{align*}
p_1Y_k-Y_kp_1=-\mu_k\left(-f_{10}+\mi b_1f_{01}\right)z
, \quad
p_2Y_k+Y_kp_2=-\mu_k\left(f_{00}-\mi b_1f_{11}\right)z+
\mu_k 1,
\end{align*}
because of the following relations
\begin{align*}
x\left(p_1Y_k-Y_kp_1\right)&=-\left(p_1Y_k-Y_kp_1\right)x,\quad
 z\left(p_1Y_k-Y_kp_1\right)=-b_1\mi \left(p_1Y_k-Y_kp_1\right)xz,\\
 x\left(p_2Y_k+Y_kp_2\right)&=\left(p_2Y_k+Y_kp_2\right)x,\quad
 z\left(p_2Y_k+Y_kp_2\right)=\left(p_2Y_k+Y_kp_2\right)z,\\
\Delta(p_1Y_k-Y_kp_1)
&=(p_1Y_k-Y_kp_1)\otimes 1+(f_{00}+\mi b_1 f_{11})z\otimes (p_1Y_k-Y_kp_1)+\\
&\quad+
(-f_{10}+\mi b_1 f_{01})z\otimes (p_2Y_k+Y_kp_2),\\
\Delta(p_2Y_k+Y_kp_2)
&=(p_2Y_k+Y_kp_2)\otimes 1+(f_{00}-\mi b_1 f_{11})z\otimes 
(p_2Y_k+Y_kp_2)-\\
&\quad-(f_{10}+
 \mi b_1 f_{01})z\otimes 
(p_1Y_k-Y_kp_1).
\end{align*}
Similarly, we get 
\begin{align*}
p_1X_j-X_jp_1=-\mu_j^\prime\left(f_{10}-\mi b_1f_{01}\right)z
, \quad
p_2X_j+X_jp_2=-\mu_j^\prime\left(f_{00}-\mi b_1f_{11}\right)z+
\mu_j^\prime 1,
\end{align*}
from the following formulae
\begin{align*}
x\left(p_1X_j-X_jp_1\right)&=-\left(p_1X_j-X_jp_1\right)x,
\quad z\left(p_1X_j-X_jp_1\right)=-b_1\mi \left(p_1X_j-X_jp_1\right)xz,\\
x\left(p_2X_j+X_jp_2\right)&=\left(p_2X_j+X_jp_2\right)x,\quad
z\left(p_2X_j+X_jp_2\right)=\left(p_2X_j+X_jp_2\right)z,\\
\Delta\left(p_1X_j-X_jp_1\right)
&=(f_{00}+\mi b_1 f_{11})z\otimes \left(p_1X_j-X_jp_1\right)+\left(p_1X_j-X_jp_1\right)\otimes 1+\\
&\quad+(f_{10}-\mi b_1 f_{01})z\otimes \left(p_2X_j+X_jp_2\right),\\
\Delta\left(p_2X_j+X_jp_2\right)
&=(f_{00}-\mi b_1 f_{11})z\otimes \left(p_2X_j+X_jp_2\right)+
\left(p_2X_j+X_jp_2\right)\otimes 1+\\
&\quad+(f_{10}+\mi b_1 f_{01})z\otimes \left(p_1X_j-X_jp_1\right).
\end{align*}
Since  $X_jY_k+Y_kX_j=\lambda_{j,k}(1-xy)$, 
$p_1(X_jY_k+Y_kX_j)=(X_jY_k+Y_kX_j)p_1\Rightarrow \mu_j^\prime=\mu_k=0$. So \eqref{ideal:xyW(b1,-1)6} holds in $H$.
We have $H\simeq \mathfrak{A}_4(n_1,n_2;I_4)$ because $ \mathrm{gr}\,\left\{T[\Omega_4(n_1,n_2)]\#H_8/{\mathcal{I}(I_4)}\right\}\simeq \mathfrak{B}[\Omega_4(n_1,n_2)]\#H_8$.
\end{proof}

\subsection*{Proof of Theorem \ref{HopfAlgOverH8}} Let $M$ be one of Yetter-Drinfel'd modules listed in Theorem \ref{NicholsAlg:maintheorem}. We need to give a construction for any finite-dimensional Hopf algebra $H$ over $H_8$ up to isomorphism such that its infinitesimal braiding is isomorphic to $M$. By Theorem \ref{generatedByDegreeOne},
$\mathrm{gr}\,H\simeq \mathfrak{B}(M)\# H_8$.
According to Corollary \ref{Isomorphism:B(V)H_8}, up to isomorphism,  $\mathrm{gr}\,H\simeq \mathfrak{B}(M)\# H_8$ for $M=\Omega_1(n_1,n_2,n_3,n_4)$, $\Omega_2(n_1, n_2)$, 
$\Omega_4(n_1,n_2)$, $\Omega_6$, $\Omega_7$. Proposition \ref{HopfAlge:A_1},  \ref{HopfAlg:Omega2}, \ref{HopfAlg:Omega_4},
\ref{HopfAlg:A_6} and \ref{HopfAlg:A_7} finish the proof.

\section*{Acknowledgements}
I am indebted to my doctoral supervisor Prof. Naihong Hu for his encouragement  and useful discussions which push me to consider the subject of finite dimensional Nichols algebras. I would like to thank my postdoctoral supervisor  Prof. Wenxue Huang,  Yunnan Li and Rongchuan Xiong for useful discussions and constructive comments. Finally I would like to thank the referees for careful reading and useful comments. 
This work was partially supported by
the doctoral research project of Guangdong Province (gdbsh2014004).

\newcommand{\etalchar}[1]{$^{#1}$}
\providecommand{\bysame}{\leavevmode\hbox to3em{\hrulefill}\thinspace}
\providecommand{\MR}{\relax\ifhmode\unskip\space\fi MR }
% \MRhref is called by the amsart/book/proc definition of \MR.
\providecommand{\MRhref}[2]{%
  \href{http://www.ams.org/mathscinet-getitem?mr=#1}{#2}
}
\providecommand{\href}[2]{#2}

\end{document}